\documentclass{article}
\usepackage{amsfonts}
\usepackage{amsmath,amssymb,amsthm}
\usepackage{bbm}
\usepackage{mathrsfs}
\usepackage{xcolor}
\numberwithin{equation}{section}

\newtheorem{theorem}{Theorem}[section]
\newtheorem{lemma}[theorem]{Lemma}

\newtheorem{proposition}[theorem]{Proposition}
\newtheorem{corollary}[theorem]{Corollary}

\newtheorem{question}[theorem]{Question}

\newtheorem{remark}[theorem]{Remark}
\newcommand{\To}{\rightarrow}
\newcommand{\N}{\mathbb{N}}
\newcommand{\Z}{\mathbb{Z}}

\renewcommand{\mod}{\text{ mod }}

\begin{document}
	\title{{\bf Minimal PI-systems with all points are multiply minimal}} 
	\author{Zijie Lin$^{1,2}$ and Kangbo Ouyang$^{2,*}$}
	\date{}
	\maketitle
	\begin{center}
		{\footnotesize
			
			$^1$School of Mathematics, South China University of Technology, \\Guangzhou, Guangdong, 510640, P.R. China.
			
			$^2$School of Mathematical sciences, University of Science and Technology of China, \\Hefei, Anhui, 230026, P.R. China.
	}\end{center}
	
	\begin{abstract}
		We construct a minimal subshift \((X^{*},\sigma)\) that serves as an open proximal extension of its maximal equicontinuous factor. We establish that every point in this subshift is multiply recurrent minimal. This work solves an open problem raised by Huang, Shao and Ye regarding the existence of minimal PI-systems such that each point is  multiply minimal.
	\end{abstract}
	\noindent
	{\bf Keywords.} minimal systems, symbolic systems, multiply minimal.\\
	{\bf MSC2020:} 37B05, 37B10, 37B20
	
	\renewcommand{\thefootnote}{}
	\footnotetext{
		$^*$Corresponding author.
		\par
		E-mail addresses: zjlin137@126.com (Zijie Lin),
		oy19981231@mail.ustc.edu.cn(Kangbo Ouyang).}

	\section{Introduction}
	
	Denote by $\N$ the set of all natural numbers including $0$, and $\Z_+$ the set of all positive integers. 
	Let $X$ be a compact metric space with a metric $\rho$, and $T:X\To X$ be a continuous surjection. 
	The pair $(X,T)$ is called a \emph{topological dynamical system}.
	For $d\ge 2$, write $x^{(d)}:=(x,x\dots,x)\in X^d$ and $\tau_d:=T\times T^2\times \cdots \times T^d$.
	
	Recurrence and minimality are important in the study of topological dynamical systems. 
	The Birkhoff recurrence theorem shows that each topological dynamical system has at least one recurrent point. 
	For a topological dynamical system $(X,T)$, a point $x\in X$ is called \emph{recurrent} for the map $T$ if for some strictly increasing sequence $\{n_i\}$ of $\Z_+$,  $T^{n_i}x\to x$ as $i\to \infty$.
	Recall that the \emph{orbit closure} of a point $x\in X$ under the map $T$ is the closure of its orbit $\mathcal{O}(x,T):=\{T^nx:n\in\N\}$.
	A point $x\in X$ is called \emph{minimal} or \emph{uniform recurrent} for a map $T$ if its orbit closure under the map $T$ is \emph{minimal}, that is, does not contain any closed $T$-invariant subset.
	It is known that each topological dynamical system has at least one minimal point.
	Furstenberg \cite{Fur77} proves the multiple recurrence theorem, which gives a dynamical proof of Szemer\'edi theorem.
	The multiple recurrence theorem can shows the multiple Birkhoff recurrence theorem, that is, the existence of multiply recurrent points.
	A point $x\in X$ is called \emph{multiply recurrent} if the point $x^{(d)}\in X^d$ is recurrent for $\tau_d$, that is, there is a strictly increasing sequence $\{n_i\}$ of $\Z_+$ such that $T^{jn_i}x\to x$ as $i\to \infty$ for $1\le j\le d$.
	Furstenberg and Weiss \cite{FW78} show the existence of multiply recurrent points by topologically dynamical tools, where Furstenberg \cite{Fur77} uses ergodic theory.
	Furstenberg \cite{Fur81} asks whether there is always a \emph{multiple minimal} point, that is, a point $x\in X$ such that $x^{(d)}$ is a minimal point for $\tau_d$.
	However, Huang, Shao and Ye \cite{HSY22} give a counterexample to this question.
	They show that there is a minimal weakly mixing system $(X,T)$ such that for all $x\in X$, $(x,x)$ is not minimal for $T\times T^2$.
	
	On the other hand, there are some minimal systems whose points are all multiply minimal.
	For a \emph{doubly minimal} system $(X,T)$, that is, the orbit closure of any pair $(x,y)$ under $T\times T$ is either $X\times X$ or the graph of a power of $T$,
	Auslander and Markley \cite{AM85} prove that $(X^d,\tau_d)$ is minimal for all $d\in \Z_+$.
	This show that all the points of a doubly minimal system are multiply minimal.
	Besides, a \emph{distal} system is a topological system $(X,T)$ such that $\inf_{n\in \N}\rho(T^nx,T^ny)>0$ for any $x\neq y\in X$.
	All points in a distal system are multiply minimal.
	Weiss \cite{Wei98} show that any ergodic system with zero entropy has a uniquely ergodic model that is doubly minimal.
	Huang and Ye \cite{HY15} show that all the doubly minimal systems are subshifts.

	In \cite{HSY22}, the authors ask the following question:
	\begin{question}
		Is there a minimal PI-system $(X,T)$ which is a non-trivial open proximal extension of an equicontinuous system $(Y,T)$ such that for each $x\in X$, $(x,x)$ is minimal for $T\times T^2$?
	\end{question}
	
	This question brings our attention to study the multiply minimal points in Proximal-Isometric (PI) systems (See definition in Section \ref{s:PI}).
	We give a positive answer to this question by constructing a minimal subshift $(X^*,\sigma)$. Write $\sigma_d:=\sigma\times \sigma^2\times \cdots \times \sigma^d$. 
	
	\begin{theorem}\label{t:main}
		There is a minimal subshift $(X^*,\sigma)$ such that it is an open proximal extension of its maximal equicontinuous factor, and for $d\ge 2$ and $x\in X^*$, $x^{(d)}$ is $\sigma_d$-minimal.
	\end{theorem}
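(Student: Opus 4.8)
The plan is to construct $X^{*}$ by hand as a hierarchically structured (``$S$-adic'') subshift lying over an odometer, and then to verify separately that it is minimal, that the natural factor map onto the odometer is a non-trivial open proximal extension realizing the maximal equicontinuous factor, and that every diagonal point $x^{(d)}$ is $\sigma_d$-minimal. Concretely, one fixes a rapidly increasing sequence $(N_k)_{k\ge 1}$ with $N_k\mid N_{k+1}$, sets $Y=\varprojlim\Z/N_k\Z$ with the adding machine (this will be the maximal equicontinuous factor), and builds inductively words $W_k$ of length $N_k$ over a finite alphabet, using an auxiliary stock of ``decorated letters'' $A_j,A_j^{(1)},\dots,A_j^{(\widehat n_k)}$ to record the finitely many admissible ways of filling in a block. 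Each $W_{k+1}$ is a concatenation of $N_{k+1}/N_k$ level-$k$ blocks --- each one an admissible filling of $W_k$ --- and it carries a new, sparser set of ``free'' positions; then $X^{*}$ is defined as the set of bi-infinite sequences all of whose factors occur in some $W_k$. The induction is arranged so that: (i) every admissible level-$k$ word reappears in $W_{k+1}$ with gaps bounded in terms of $k$ alone, which forces $X^{*}$ to be minimal; (ii) choosing $N_{k+1}/N_k$ to be a power of an integer $q_k$ that is coprime to $\mathrm{lcm}(1,\dots,d_k)$, where $d_k\to\infty$, the type (admissible filling) assigned to the $i$-th level-$k$ sub-block of a level-$(k+1)$ block depends only on the $q_k$-adic valuation of $i$, up to the ambient phase --- so the whole type assignment, and hence the recurrence pattern of any fixed word, is invariant under the dilations $i\mapsto ji$ for $1\le j\le d_k$; and (iii) the free positions are placed sparsely enough, at every scale, that any two points in a common fibre of the projection agree on arbitrarily long central windows.

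Granting such a construction, the structural assertions are comparatively routine. The level-$k$ block boundaries of any $x\in X^{*}$ are recognizable, so the map $\pi\colon X^{*}\to Y$ recording the coherent sequence of block phases is well defined, continuous, surjective and shift-equivariant, and $Y$ is equicontinuous. By (iii) any two points in a common fibre of $\pi$ are proximal, so $\pi$ is a proximal extension; writing $P$, $Q$, $R_\pi$ for the proximal relation, the regionally proximal relation, and the relation induced by $\pi$, we then have $R_\pi\subseteq P(X^{*})\subseteq Q(X^{*})\subseteq R_\pi$, so these relations coincide, $\pi$ realizes the maximal equicontinuous factor, and $(X^{*},\sigma)$ is PI (an isometric extension of a point followed by a proximal extension). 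Finally $\pi$ is open, since its fibres depend lower-semicontinuously on the base --- any legal finite word extends to a point of $X^{*}$ with any prescribed compatible phase --- and it is non-trivial, since no fibre is a singleton; this is precisely the kind of system demanded in the question.

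The core of the theorem is multiple minimality. Fix $x\in X^{*}$, $d\ge 2$ and $N\in\Z_+$. A point is a minimal point of a system iff its return-time set to each of its neighbourhoods is syndetic; since a neighbourhood basis of $x^{(d)}$ in $(X^{*})^{d}$ is furnished by the cylinders on the coordinates in $[-N,N]$ and $\sigma_d^{\,n}x^{(d)}=(\sigma^{n}x,\dots,\sigma^{dn}x)$, it suffices to show that $R=\{n\in\Z_+:\ x[jn-N,jn+N]=x[-N,N]\ \text{for all}\ 1\le j\le d\}$ is syndetic. Choose $k$ with $d\le d_k$ and $N_k$ much larger than $N$, and restrict to $n$ that are multiples of $N_k$, so that each $\sigma^{jn}$ permutes the level-$k$ block structure of $x$ without shearing it; writing $n=N_k\ell$, letting $c$ be the phase of the origin-block and $v=v_{q_k}(c)$, and using that $q_k$ is coprime to every $j\le d$ (so that $v_{q_k}(j\ell+c)=v_{q_k}(c)$ once $q_k^{v+1}\mid\ell$), property (ii) makes the level-$k$ block landing at the origin under $\sigma^{jn}$ have the same type as the one currently there, for all $1\le j\le d$ at once. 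When that common type is the canonical, fully rigid one the two blocks are literally equal and $x[jn-N,jn+N]=x[-N,N]$ follows; the non-rigid cases are treated similarly by exploiting the dilation invariance of the scheme at coarser levels. Since the $\ell$ with $q_k^{v+1}\mid\ell$ form a syndetic set, $R$ is syndetic, as required. The one real obstacle is to make such a construction actually exist: openness of $\pi$ forces every point to keep many free coordinates at every scale, whereas multiple minimality demands a rigid, multiplicatively self-similar skeleton --- governed by the dilation-invariant valuation scheme (ii) --- controlling where \emph{every} local pattern of \emph{every} point recurs, even through its free coordinates; reconciling these opposing requirements is what dictates the precise design of (ii) and (iii), the use of the decorated letters, and the intricate inductive bookkeeping of alphabets, types and gap bounds that the construction needs.
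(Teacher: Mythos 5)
Your overall architecture (a hierarchical subshift fibred over an odometer, proximality from fibres agreeing on long central windows, openness from extending any legal word with any compatible phase, and multiple minimality via syndeticity of $N_{\sigma_d}(x^{(d)},[w]^d)$ along multiples of the block lengths) matches the paper's. But the mechanism you propose for the core step --- multiple minimality --- is different and has a genuine gap that you yourself flag as ``the one real obstacle'' without closing it. You make the type of the $i$-th level-$k$ sub-block depend on the $q_k$-adic valuation of $i$, with $q_k$ coprime to $1,\dots,d_k$, so that the dilations $i\mapsto ji$ preserve the type. Even granting that such an assignment can be arranged consistently across superblock boundaries, ``same type'' only yields equality of blocks when the type determines the block; for a general point $x\in X^*$ the level-$k$ block at the origin involves arbitrary choices of the free coordinates that openness forces you to keep at every scale, and the equal-type blocks at the dilated positions need not repeat those choices. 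So $x[jn-N,jn+N]=x[-N,N]$ does not follow, and deferring ``the non-rigid cases'' to coarser levels does not help: coarser levels control where coarse types recur, not the fine content of the specific word sitting at the origin of the specific point $x$. Moreover, a strictly valuation-determined skeleton pushes the system toward a Toeplitz-like, almost automorphic structure, which is exactly what openness and the infinitude of every fibre are meant to exclude.

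The paper resolves this tension with an idea absent from your plan: it imports multiple minimality from an auxiliary subshift rather than from arithmetic. Starting from Oprocha's doubly minimal subshift (whose $d$-fold products under $\sigma\times\sigma^2\times\cdots\times\sigma^d$ are minimal by Auslander--Markley), it extracts a combinatorial lemma producing, at each level, a partition $\{0,\dots,n\}=\biguplus_q R_q$ such that for every offset $l$ and every family of sets each differing from the relevant $R_q$ in fewer than $c$ elements, there is a single $K$ with $iK+l$ landing in the prescribed (possibly wrapped into the next block) sets for all $1\le i\le d$ simultaneously. Each admissible word is then required to occupy a position set in the class $\mathcal{R}_c(R_q)$, i.e.\ within $c$ deletions of $R_q$; because each symbol is synchronized at $4cd+1$ positions, at least one synchronized return survives all the perturbations caused by the free coordinates and by the transposition adjustments needed for openness. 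It is precisely this robustness slack --- not a dilation-invariant valuation scheme --- that lets every point, with all of its free choices, be multiply minimal while the factor map stays open and every fibre stays infinite. Without an analogue of this robust simultaneous-return lemma, the construction you describe does not exist as stated.
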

	
	Recall that a map $\pi$ is \emph{open} if the image under $\pi$ of each open set is also an open set.
	Our example is an extension of an odometer, and the corresponding factor map is open and not a bijection.
	This fact shows that our example is not a Toeplitz subshift, which is an almost one-to-one extension of an odometer \cite{Wil84}.
	
	\subsection{Main ideas}
	
	The idea of our example is to realize the ``multiple recurrence" in a finite segment.
	Roughly speaking, in the setting of symbolic dynamics, let all the words ``multiply" appear in each longer word.
	This can be done by Lemma \ref{l:combin} in Section \ref{s:com_lem}.
	The proof is mainly based on the example constructed by Oprocha \cite{Opr19}.
	He give an example of doubly minimal subshift $(X,\sigma)$ which admits any given two words of two symbols.
	And by \cite{AM85}, $(X^d,\sigma_d)$ is minimal.
	By this fact, we can get a minimal subshift $(Z,\sigma)$ of more symbols such that $(Z^d,\sigma_d)$ is minimal and all the pair of symbols appear in the subshift.
	By those property, we get Lemma \ref{l:combin}.
	Turning to the setting of symbolic dynamics, this lemma lets all the pair of symbols ``multiply" appear in a word.
	
	Now, we can construct a subshift of $\{0,1,2\}^{\N}$. Using Lemma \ref{l:combin}, we construct a sequence of sets of words inductively.
	We construct the first set of words.
	Let symbol $2$ be the prefix of those words, and the suffix consists of symbols $0$ and $1$.
	Temporarily add one more symbol $*$, and use Lemma \ref{l:combin} for symbols $0$, $1$ and $*$.
	Then we can get a word whose first symbol is symbol $2$ as prefix and the rest consists of symbols $0$, $1$ and $*$ as suffix.
	Replacing $*$ by $0$ or $1$, we can get a set of words.
	Choose a word of this set called a marker word.
	To ensure the openness of the factor map corresponding to the maximal equicontinuous factor, we should make some adjustment, that is, exchanging two symbols in the suffix of words in this set twice.
	Then we get a larger set of words, which is the first set of words.
	The words generated by adjusting the marker word are called prefix words, which will be the prefix of the words in the next step of constructing longer words.
	The rest words are called suffix words, which form the suffix of words in the next step.
	View the suffix words as symbols, temporarily add one more symbol $*$ and use Lemma \ref{l:combin} again.
	Then we get some words whose prefixes are the prefix words and the rest consist of suffix words and $*$.
	Replace $*$ by suffix words, we can get a new set of words.
	In this set, choose a word whose prefix is the marker word as a new marker word, and make a similar adjustment to words in the new set by exchanging two suffix words twice.
	Then we get a larger set of words, which is the second set of words.
	The words generated by adjusting the new marker word are the new prefix words, and the rest are the new suffix words.
	After finishing this procedure inductively, we get sequences of marker words and sets of words.
	The sequence of marker words give a point in $\{0,1,2\}^{\N}$, and the orbit closure of this point is our constructed subshift.
	
	The prefix words can induce the factor map corresponding the maximal equicontinuous factor of the constructed subshift.
	For any point in the constructed subshift, by the locations prefix words appear, we can get a sequence of integer, which corresponds to a point in an adding machine.
	This leads to a factor map from the constructed subshift to an adding machine.
	Since every two prefix words are almost same, we can show the factor map is proximal, which implies that the adding machine is the maximal equicontinuous factor of the constructed subshift.
	Through iterative adjustments of word constructions, we ensure that any word in the subshift can be embedded at arbitrary positions within prefix words.
	This fact leads to the openness of the factor map.
	Finally, by Lemma \ref{l:combin}, each words can ``multiply" appear in a longer word so many times that the adjustment of the words can preserve the ``multiple recurrence".

	\section{Preliminaries}
	\newcommand{\eps}{\epsilon}
	\newcommand{\bb}[1]{\mathbf{#1}}
	\newcommand{\subword}[1]{\left.\left(#1\right)\right|}
	
	In this section, we give some definitions and notations.
	
	\subsection{Structure theorem of minimal systems}\label{s:PI}
	
	For two topological dynamical systems $(X,T),(Y,S)$, a map $\pi:(X,T)\to (Y,S)$ is called a \emph{factor map} if $\pi$ is a continuous surjection and satisfies $\pi\circ T=S\circ \pi$.
	In this case, call $(X,T)$ is an \emph{extension} of $(Y,S)$ and $(Y,S)$ is a \emph{factor} of $(X,T)$.
	
	Let $\pi:(X,T)\to (Y,S)$ be a factor map of minimal systems. 
	The factor map $\pi$ is called \emph{proximal} if every two points $x,y\in X$ with same image under $\pi$ are \emph{proximal}, that is, $\liminf_{n\to\infty}d(T^nx,T^ny)=0$.
	A topological dynamical system is called \emph{equiconituous} if for any $\epsilon>0$, there is $\delta>0$ such that $\sup_{n\in\N}\rho(T^nx,T^ny)<\epsilon$ whenever $\rho(x,y)<\delta$.
	Since each topological dynamical system has a maximal equicontinuous factor, we denote by $(X_{eq},T_{eq})$ the \emph{maximal equicontinuous factor} of a topological dynamical system $(X,T)$.
	If a topological dynamical system is a proximal extension of some equicontinuous system, then this equicontinuous system is a maximal equicontinuous factor of the topological dynamical system.
	
	Before introducing the structure theorem for minimal systems \cite{EGS75}, let us give some definitions.
	For two topological dynamical systems $(X,T)$ and $(Y,T)$, let $\pi:X\to Y$ be the factor map from $(X,T)$ to $(Y,T)$.
	The factor map $\pi$ is called \emph{equicontinuous} if for any $\epsilon>0$, there is $\delta>0$ such that $\sup_{n\in\N}\rho(T^nx,T^ny)<\epsilon$ whenever $\rho(x,y)<\delta$ and $\pi(x)=\pi(y)$.
	A topological dynamical system $(X,T)$ is called \emph{transitive} if for any two non-empty open subset $U,V$, there is $n\in\N$ such that $U\cap T^{-n}V\neq \emptyset$.
	The factor map $\pi$ is called \emph{weakly mixing} if the subsystem $R_\pi:=\{(x,y)\in X\times X:\pi(x)=\pi(y)\}$ of $(X\times X, T\times T)$ is transitive.
	The factor map $\pi$ is called \emph{relatively incontractible} (RIC) if it is open and for every $n\ge 1$, let $R^n_\pi:=\{(x_1,\dots,x_n)\in X^n:\pi(x_1)=\cdots=\pi(x_n)\}$, and the minimal points of the system $(R^n_\pi,T\times T\times \cdots \times T)$ are dense in $R^n_\pi$.
	
	A minimal system $(X,T)$ is called a \emph{strictly PI-system} if there is an ordinal $\alpha$ and a family of systems $\{(X_{\beta},T_{\beta})\}_{\beta\le \alpha}$ such that the followings hold:
	\begin{itemize}
		\item $W_0$ is a singleton;
		\item for any $\beta<\alpha$, there is a factor map from $(X_{\beta+1},T_{\beta+1})$ to $(X_{\beta},T_{\beta})$ which is either proximal or equicontinuous;
		\item for any limit ordinal $\beta\le \alpha$, $(X_{\beta},T_{\beta})$ is the inverse limit of system $\{(X_{\gamma},T_{\gamma})\}_{\gamma<\beta}$;
		\item $W_\alpha=X$.
	\end{itemize} 
	The system $(X,T)$ is called a \emph{PI-system} if there is a proximal factor map from some strictly PI-system to $(X,T)$.
	
	Now, we introduce the structure theorem for minimal systems \cite{EGS75}.
	
	\begin{theorem}[{Structure theorem for minimal systems}]
		Let $(X,T)$ be a minimal system.
		Then there are a proximal extension $X_{\infty}$ of $X$ and a RIC weakly mixing extension $\pi_{\infty}$ from $X_{\infty}$ to a strictly PI-system $Y_{\infty}$.
		The extension $\pi_{\infty}$ is a bijection if and only if $X$ is a PI-system.
	\end{theorem}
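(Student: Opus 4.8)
The plan is to reproduce the classical argument of Ellis, Glasner and Shapiro (obtained independently by Veech), which rests on the algebraic theory of the enveloping semigroup. First I would fix a minimal left ideal $M$ of the Ellis semigroup $E(X)$ and a minimal idempotent $u\in M$, so that $G:=uM$ is a group carrying the $\tau$-topology, and recall the standard dictionary: minimal factors of $X$ (relative to a chosen base point), together with their proximal and equicontinuous extensions, correspond to $\tau$-closed subgroups of $G$ and their relative positions. In these terms a factor map is proximal exactly when the two associated subgroups coincide, an equicontinuous extension corresponds to the appropriate closure condition on subgroups, and the RIC property receives an intrinsic description as well.

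The two structural inputs I would isolate are: (i) the \emph{RIC-ification lemma} --- for every factor map $\pi\colon X\to Y$ of minimal systems there exist proximal extensions $X'\to X$ and $Y'\to Y$ and a RIC factor map $\pi'\colon X'\to Y'$ fitting into a commuting square; and (ii) the \emph{dichotomy for RIC extensions} --- a RIC extension is weakly mixing if and only if it has no nontrivial equicontinuous subextension, and a RIC extension that is not weakly mixing has a maximal equicontinuous subextension, which is again RIC over the base. Input (i) is obtained through a quasifactor construction using the circle operation on the space of closed subsets of $X$ together with properties of minimal idempotents; input (ii) is a direct computation with the Ellis group and the $\tau$-topology.

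Granting these, I would build the PI-tower by transfinite induction. At stage $0$, apply (i) to the trivial map $X\to\{\text{pt}\}$ to get a proximal extension $\tilde X_0$ of $X$ for which $\tilde X_0\to Y_0:=\{\text{pt}\}$ is RIC. Given a RIC extension $\eta_\beta\colon \tilde X_\beta\to Y_\beta$ in which $\tilde X_\beta$ is a proximal extension of $X$ and $Y_\beta$ is a strictly PI-system, stop if $\eta_\beta$ is weakly mixing; otherwise interpose, via (ii), the maximal (nontrivial) equicontinuous subextension $Y_{\beta+1}$ of $\eta_\beta$, and then apply (i) to the map $\tilde X_\beta\to Y_{\beta+1}$ to pass to a further proximal extension $\tilde X_{\beta+1}$ of $X$ carrying a RIC map $\eta_{\beta+1}\colon \tilde X_{\beta+1}\to Y_{\beta+1}$; since $Y_{\beta+1}\to Y_\beta$ is equicontinuous, $Y_{\beta+1}$ is again strictly PI. At limit ordinals pass to inverse limits, which preserves minimality, proximality of the extensions over $X$, the RIC property and being strictly PI. As the $\tau$-closed subgroups of $G$ form a set, the Ellis groups cannot strictly decrease forever, so the process stabilizes at some ordinal $\alpha$; then $X_\infty:=\tilde X_\alpha$ is a proximal extension of $X$, $Y_\infty:=Y_\alpha$ is strictly PI, and $\pi_\infty:=\eta_\alpha\colon X_\infty\to Y_\infty$ is a RIC weakly mixing extension.

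For the last assertion: if $\pi_\infty$ is a bijection then $X_\infty\cong Y_\infty$ is strictly PI and $X$ is a proximal factor of it, hence PI by definition. Conversely, if $X$ is PI, say $q\colon Z\to X$ is a proximal factor map from a strictly PI-system $Z$, then a minimal component of the fibre product of $Z$ and $X_\infty$ over $X$ is a strictly PI-system that is a proximal extension of $X_\infty$, so $X_\infty$ is PI as well; one then uses that a RIC weakly mixing extension with PI domain must be an isomorphism --- a weakly mixing extension has no nontrivial equicontinuous subextension, and since its minimal points are dense in the fibre-square while every minimal proximal pair lies on the diagonal, it admits no nontrivial proximal structure either, so it cannot sit nontrivially over a PI-tower --- whence $\pi_\infty$ is a bijection. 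I expect the real obstacle to be the RIC-ification lemma (i): constructing the proximal extensions $X'\to X$ and $Y'\to Y$ while simultaneously guaranteeing openness of $\pi'$ and density of minimal points in every fibre-power $R^n_{\pi'}$ is the technical heart of the whole theorem, and it is precisely there that the machinery of minimal idempotents and the $\tau$-topology on $G$ is indispensable.
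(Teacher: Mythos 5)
The paper does not prove this theorem: it is stated as background and attributed to \cite{EGS75}, so there is no in-paper proof to compare against. Your outline faithfully reproduces the standard Ellis--Glasner--Shapiro (and Veech) argument --- the Ellis group with its $\tau$-topology, the RIC-ification lemma, the dichotomy that a RIC extension is weakly mixing iff it has no nontrivial equicontinuous subextension, a transfinite induction terminating by a cardinality bound on $\tau$-closed subgroups, and the PI criterion for $\pi_{\infty}$ being an isomorphism --- and it correctly locates the technical weight in the RIC-ification step, so as a sketch of the cited proof it is accurate.
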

	
	\subsection{Odometer}
	In this subsection, we recall some basic notations of odometers \cite{Kur03}.
	Say a \emph{periodic structure}  $\{p_k\}_{k\ge 0}$ is a sequence  $\{p_k\}_{k\ge 0}$ such that $p_0>1$ and  $p_k$ divides $p_{k+1}$.
	For a positive integer $p$, let $\Z_p=\{0,1,\dots,p-1\}$ be a discrete topological group under the addition module $p_k$.
	Then, the product space of $\Z_p$, 
	\[
	\Z_{\{p_k\}}=\prod_{k=0}^{\infty}\Z_{p_k},
	\]
	can be viewed as a topological space endowed with product topology.
	The space $\Z_{\{p_k\}}$ is also a topological group.
	Then, the zero element of $\Z_{\{p_k\}}$ is $\underline{0}=(0,0,0,\dots)$.
	Define $R_{\underline{1}}:\Z_{\{p_k\}}\to \Z_{\{p_k\}}$ by
	\[
	R_{\underline{1}}((y_0,y_1,y_2,\dots))=(y_0,y_1,y_2,\dots)+(1,1,1,\dots).
	\]
	The orbit closure 
	\[
	Y=\overline{\mathcal{O}}(\underline{0},R_{\underline{1}})=\{(y_k)_{k\ge 0}\in \Z_{\{p_k\}}:y_{k'}\equiv y_k \mod p_k\text{ for all }k'\ge k\}
	\]
	is called an \emph{odometer} (or \emph{adding machine}) with respect to the periodic structure $\{p_k\}_{k\ge 0}$.
	It is clear that all the odometers are minimal equicontinuous systems.
	
	\subsection{Symbolic dynamics}
	In this subsection, we give some definitions and notations about symbolic dynamics.
	
	Given a finite alphabet $\Sigma$, the set
	\[
	\Sigma^\N=\{(x_i)_{i=0}^{\infty}: x_i\in \Sigma\}
	\]
	is a compact metric space with metric $\rho$ defined as follows:
	\[\rho(x,y)=2^{-\min\{i\in\N:x_i\neq y_i\}}\]
	and $\rho(x,x)=0$ for two distinct points $x=(x_i)_{i=0}^{\infty}$, $y=(y_i)_{i=0}^{\infty}\in \Sigma^\N$.
	For a positive integer $n$, the elements of $\Sigma^n$ are called {\bf words}, and their {\bf lengths} are $n$.
	For a word $w=w_0w_1\cdots w_{n-1}\in \Sigma^n$, a {\bf subword} of $w$ is the word 
	\[
	w|_{[i,j)}:=w_iw_{i+1}\cdots w_{j-1}.
	\]
	for some $0\le i<j\le n$.
	For convenience, we sometime write $w|_{[i,j)}$ by $w|_{[i,j-1]}$, using the closed interval.
	Also, for a point $x=x_0x_1\cdots\in \Sigma^n$, a subword of $x$ is the word $x|_{[i,j)}:=x_ix_{i+1}\cdots x_{j-1}$ for some $0\le i<j$.
	For a word $w\in\Sigma^n$, the {\bf cylinder set} of $w$ is the set
	\[
	[w]:=\{x\in\Sigma^{\N}:x|_{[0,n)}=w\}.
	\]
	For two words $w=w_0w_1\cdots w_{n-1}$ and $v=v_0v_1\cdots v_{m-1}$, their {\bf concatenation} $wv$ is the word $w_0w_1\cdots w_{n-1}v_0v_1\cdots v_{m-1}$.
	
	Let $(X,\sigma)$ be a subshift.
	The {\bf language} $\mathcal{L}(X)$ of $X$ is the collection of all the subwords of all the points in $X$, that is,
	\[
	\mathcal{L}(X):=\{w\in\bigcup_{n=1}^{\infty}\Sigma^n:w\text{ is a subword of some }x\in X\}=\{w:[w]\cap X\neq\emptyset\}.
	\]

	\subsection{Permutations}
	
	For an integer $n\ge 1$, denote by $S(n)$ the set consists of all the permutations $\phi:\{1,...,n\}\to\{1,...,n\}$ permuting $i$ and $j$ where $1\le i<j\le n$, that is,
	\[
	\phi(x)=\left\{
	\begin{aligned}
		&j,&&x=i,\\
		&i,&&x=j,\\
		&x,&&\text{otherwise.}
	\end{aligned}
	\right.
	\]
	And set $S^2(n)=\{\phi\circ\phi':\phi,\phi'\in S(n)\}$.
	Then the cardinality of $S^2(n)$ is not greater than $n^4$, which ensures that our example has positive entropy.
	It is easy to see that for large enough $n$ and any $\phi\in S^2(n)$, there is some $1\le m_\phi \le n$ such that $\phi(m_\phi)=m_\phi$, which admits the factor map corresponding to the maximal equicontinuous factor is proximal in our example.
	
	\section{Combinatorial lemmas}\label{s:com_lem}
	In this section, we give a combinatorial lemma which is used throughout the construction of our example.
	This lemma is used to ensure the multiple minimality of our example.
	And it is induced by an example of minimal subshifts with the properties that all points are multiply minimal, or moreover, this example is doubly minimal.
	
	\begin{theorem}\label{l:mul_min_shi}
		\cite[Theorem 4.3]{Opr19}
		For any two distinct words $A,B\in\{0,1\}^s$, where $s\ge 1$, and for any $K>0$, there exists a minimal subshift $X\subset \{0,1\}^{\Z}$ and $N$ such that
		\begin{itemize}
			\item[1.] $(X,\sigma)$ is doubly minimal, weakly mixing and has zero topological entropy,
			\item[2.] $A,B\in\mathcal{L}(X)$,
			\item[3.] $\frac1n|\{j:x_{i+j}=1,j<n\}|\le 1/K$ for every $x\in X$, $i\in\Z$ and $n\ge N$,
			\item[4.] each $x\in X$ is a concatenation of words $A,B$ separated by sequences of zeros.
		\end{itemize}
	\end{theorem}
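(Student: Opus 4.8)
The plan is to build $X$ by a hierarchical (S-adic) construction, as in \cite{Opr19}. Fix distinct $A,B\in\{0,1\}^s$ and $K>0$. I would construct inductively a sequence of codeword pairs $\mathcal{W}_n=\{u_n,v_n\}$ with $u_n\neq v_n$, and integers $M_n\to\infty$, so that $u_{n+1}$ and $v_{n+1}$ are each a concatenation of level-$n$ blocks (on the order of $M_n$ of them, and not necessarily the same number for $u_{n+1}$ as for $v_{n+1}$); allowing $u_{n+1}$ and $v_{n+1}$ to have unequal lengths is what leaves room for weak mixing, since it destroys any rigid periodic structure. The subshift $X\subset\{0,1\}^{\Z}$ is defined to consist of those $x$ all of whose finite subwords occur in some concatenation of copies of $u_n,v_n$; then $X$ is a non-empty, $\sigma$-invariant closed set, and (being two-sided) every point of $X$ is, for each $n$, literally a bi-infinite concatenation of copies of $u_n$ and $v_n$.

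For the base level I fold in the density requirement: set $u_0=A\,0^{g}$ and $v_0=B\,0^{g}$ with $g$ large relative to $sK$. Then $A,B\in\mathcal{L}(X)$, which is item~2; every $x\in X$ is a bi-infinite concatenation of copies of $u_0$ and $v_0$, hence a concatenation of copies of $A$ and $B$ separated by runs of zeros, which is item~4; and since each level-$0$ block has at most $s$ ones among its $s+g$ symbols, a window of length $n$ meets at most $n/(s+g)+2$ blocks and so contains at most $(n/(s+g)+2)\,s\le n/K$ ones once $n$ exceeds a suitable $N=N(s,K)$, which is item~3.

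For the inductive step, given $\{u_n,v_n\}$ I would choose $u_{n+1}\neq v_{n+1}$, each a concatenation of level-$n$ blocks, so as to satisfy two families of combinatorial requirements. \emph{Minimality}: every block-word in $\{u_n,v_n\}^{m_n}$ that occurs anywhere in the tower already occurs in every length-$R_n$ window of each of $u_{n+1}$ and $v_{n+1}$, for a suitable $m_n\to\infty$; taking the limit, this makes every word of $\mathcal{L}(X)$ syndetic in every point, so $X$ is minimal. \emph{Richness of pairs}: for every ordered pair $(\bar P,\bar Q)$ of block-words of length at most $m_n$ occurring in the tower, and every shift $0\le t<m_n$, there is a position at which $\bar P$ occurs in $u_{n+1}$ and $\bar Q$ occurs in $v_{n+1}$ at the aligned position, another with the roles of $u_{n+1}$ and $v_{n+1}$ exchanged, and one realizing $\bar P$ and $\bar Q$ at relative shift $t$ inside a single long concatenation of level-$n$ blocks. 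Since there are exponentially many (in $M_n$) admissible ways to concatenate $M_n$ level-$n$ blocks, while these requirements involve only polynomially many patterns once $M_n$ is large relative to $R_n$ and $m_n$, a greedy/counting argument produces such $u_{n+1}$ and $v_{n+1}$. Zero topological entropy then follows automatically, since only a bounded number of words enter at each level and hence $|\mathcal{L}(X)\cap\{0,1\}^{k}|$ grows sub-exponentially in $k$.

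The step I expect to be the main obstacle is deducing, in the limit, that $X$ is doubly minimal (and weakly mixing) from the ``richness of pairs'' requirement. Concretely, one must show that whenever $x\in X$ and $y\in X$ is not of the form $\sigma^k x$, the pair $(x,y)$ has dense orbit in $X\times X$; I would argue by comparing the level-$n$ block structures of $x$ and $y$ for each $n$. Informally: if these block grids stay compatible at every scale, then $y\notin\{\sigma^k x:k\in\Z\}$ forces the level-$n$ codings of $x$ and $y$ into distinct orbits for all large $n$, and one realizes any prescribed finite pair of patterns by locating adjacent level-$(n+1)$ codewords of $x$ and $y$ at aligned positions and invoking the $t=0$ across-codeword clause; if instead the grids are forced out of alignment at some scale, the relative-shift clause produces the prescribed pair at the required offset, and that same clause, present at all levels, precludes any non-trivial equicontinuous factor and hence gives weak mixing. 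Turning this dichotomy into a proof, in particular verifying that the ``all pairs, all shifts'' property is at once maintainable through the induction and strong enough to close the double-minimality argument, is the delicate combinatorial core, and is precisely the content of \cite[Theorem~4.3]{Opr19}.
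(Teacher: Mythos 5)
This statement is not proved in the paper at all: it is quoted verbatim as \cite[Theorem 4.3]{Opr19} and used as a black box, so there is no in-paper argument to compare yours against. Judged on its own terms, your proposal is a reasonable reconstruction of the general strategy behind such results (a hierarchical block construction in the style of Weiss and Oprocha), and the base-level step is essentially complete and correct: taking $u_0=A0^g$, $v_0=B0^g$ with $g$ large relative to $sK$ does deliver items 2, 3 and 4, and the sub-exponential word-count gives zero entropy. Your appeal to ``trivial maximal equicontinuous factor implies weak mixing'' is also legitimate here, since for minimal $\Z$-actions the regionally proximal relation is an equivalence relation.

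The genuine gap is exactly where you locate it yourself: the passage from the ``richness of pairs'' requirements to double minimality of the limit subshift. Double minimality demands that \emph{every} pair $(x,y)$ with $y\notin\{\sigma^kx\}$ have a dense $\sigma\times\sigma$-orbit, and your dichotomy on block-grid alignment is only a heuristic; in particular, the claim that a greedy/counting argument can simultaneously satisfy the ``all pairs, all shifts, at every level'' conditions while keeping the requirements consistent across levels (and while every point of $X$ remains decomposable into level-$n$ blocks for every $n$, which itself needs a recognizability argument) is asserted rather than established. Since you explicitly defer this core to \cite{Opr19}, the proposal is a plausible outline of the known proof rather than a proof; that is an acceptable stance here only because the paper itself treats the theorem as an imported result.
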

	
	Now, by this theorem, we give an example as following, which is more convenient for us to prove some combinatorial lemmas.
	\newcommand{\NN}[1]{\{1,2,3,\dots,#1\}}
	\newcommand{\NNo}[1]{\{0,1,2,\dots,#1\}}

	\begin{lemma}\label{l:mul_min_shi2}
		Let integer $Q\ge 1$. 
		There exists a minimal subshift $Z\subset \NN{Q}^{\Z}$ such that 
		\begin{itemize}
			\item[(1)] $(Z^d,\sigma_d)$ is minimal for all $d\ge 1$,
			\item[(2)] For any $a,b\in \NN{Q}$, $[ab]\cap Z\neq\emptyset$.
		\end{itemize}
	\end{lemma}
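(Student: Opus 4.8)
The plan is to reduce everything to the binary, two–word setting of Theorem~\ref{l:mul_min_shi}. I will choose two suitable words $A,B\in\{0,1\}^{s}$, feed them to Theorem~\ref{l:mul_min_shi} to obtain a doubly minimal binary subshift $X\subset\{0,1\}^{\Z}$ — so that $(X^{d},\sigma_{d})$ is minimal for every $d\ge 1$ by \cite{AM85} — and then take $Z$ to be the image of $X$ under a carefully chosen sliding block code $\phi$ onto $\{1,\dots,Q\}^{\Z}$. Since a sliding block code commutes with every power of $\sigma$, the product map $\phi\times\cdots\times\phi$ is a continuous surjection $(X^{d},\sigma_{d})\to(Z^{d},\sigma_{d})$ intertwining the actions, so each $(Z^{d},\sigma_{d})$ is automatically minimal and $Z$ is a minimal subshift. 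Thus the only real content is to design $\phi$ so that $Z$ uses the full alphabet $\{1,\dots,Q\}$ and every pair $ab$ with $a,b\in\{1,\dots,Q\}$ occurs in $\mathcal{L}(Z)$.

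The obstacle is that in any sliding block code the two input windows responsible for two consecutive output letters overlap in all but one coordinate, so a naive ``decode a sub-block'' rule cannot make consecutive output letters independent, and most pairs would be forbidden. I get around this with a de~Bruijn device. Fix $L$ with $2^{L}\ge 2Q^{2}$ and let $\omega_{0}\omega_{1}\cdots\omega_{2^{L}-1}$ be a cyclic binary de~Bruijn word of order $L$ (indices read mod $2^{L}$); then the blocks $e_{k}:=\omega_{k}\cdots\omega_{k+L-1}$ enumerate $\{0,1\}^{L}$ without repetition, while $e_{k}$ and $e_{k+1}$ overlap in the correct $L-1$ coordinates. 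Choose a surjection $c\colon\{0,\dots,2^{L}-1\}\to\{1,\dots,Q\}$ such that every pair $(a,b)\in\{1,\dots,Q\}^{2}$ equals $(c_{k},c_{k+1})$ for some $k$; this is possible because there are $2^{L}\ge 2Q^{2}$ consecutive slots (e.g.\ put $c_{2i}=a_{i+1},\,c_{2i+1}=b_{i+1}$ for an enumeration $(a_{j},b_{j})$ of the pairs, and $c_{k}=1$ otherwise). Define $F\colon\{0,1\}^{L}\to\{1,\dots,Q\}$ by $F(e_{k})=c_{k}$; this is well defined precisely because each $L$-block occurs exactly once in a de~Bruijn word. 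Finally set $\phi(x)_{i}:=F(x|_{[i,i+L)})$ and $Z:=\phi(X)$.

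For the source system I apply Theorem~\ref{l:mul_min_shi} with $K=1$, with $A$ a binary word of length $s=2^{L+1}+L$ that contains every word of $\{0,1\}^{L+1}$ as a subword (a linear de~Bruijn word of order $L+1$ works), and with $B$ equal to $A$ with its first symbol flipped so that $A\neq B$. This yields a doubly minimal $X\subset\{0,1\}^{\Z}$ with $A\in\mathcal{L}(X)$, hence every word of $\{0,1\}^{L+1}$ lies in $\mathcal{L}(X)$. Now given any $a,b\in\{1,\dots,Q\}$, pick $k$ with $(c_{k},c_{k+1})=(a,b)$; the word $w:=\omega_{k}\cdots\omega_{k+L}$ has length $L+1$, so $w\in\mathcal{L}(X)$, and $F(w|_{[0,L)})=F(e_{k})=a$ while $F(w|_{[1,L+1)})=F(e_{k+1})=b$. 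Taking $x\in X$ with $x|_{[i,i+L+1)}=w$ gives $\phi(x)_{i}=a$ and $\phi(x)_{i+1}=b$, so $ab\in\mathcal{L}(Z)$, i.e.\ $[ab]\cap Z\neq\emptyset$; in particular every symbol of $\{1,\dots,Q\}$ occurs, so $Z\subset\{1,\dots,Q\}^{\Z}$ is over the full alphabet. The case $Q=1$ is the one–point system and is trivial.

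The main difficulty is exactly the overlap phenomenon isolated in the second paragraph; once the de~Bruijn colouring is in place the rest is routine — factors of minimal systems are minimal, sliding block codes commute with all powers of $\sigma$, and the ``$A$ rich'' choice forces the finitely many length-$(L+1)$ words one needs into $\mathcal{L}(X)$. The only other point to check carefully is the well-definedness and surjectivity of $F$, both of which are immediate from the de~Bruijn property together with $2^{L}\ge 2Q^{2}$.
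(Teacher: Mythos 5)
Your proof is correct, but it takes a genuinely different route from the paper's. The paper also starts from Theorem \ref{l:mul_min_shi}, but it handles the passage from two symbols to $Q$ symbols by a \emph{block recoding}: it chops binary sequences into disjoint blocks of a fixed length $s'$ with $2^{s'}>Q$ and reads each block as one new symbol, so that consecutive output letters come from non-overlapping input windows; the price is that (a) the words $A,B$ must be built to contain every concatenation $uv$ of two $s'$-blocks at $s'$-aligned positions, and (b) the recoding intertwines $\sigma^{s'}$ with $\sigma$ rather than commuting with $\sigma$, so transferring minimality of the product systems requires invoking the Auslander--Markley theorem for the exponents $s',2s',\dots,ds'$ and then pushing through the (bijective) recoding. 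You instead use a genuine sliding block code, which commutes with $\sigma$ itself; this makes $(Z^d,\sigma_d)$ literally a factor of $(X^d,\sigma_d)$ and renders the minimality transfer in (1) completely immediate, at the cost of having to defeat the window-overlap obstruction for (2), which your de~Bruijn colouring does cleanly (the well-definedness of $F$ from the uniqueness of $L$-windows in a cyclic de~Bruijn word, and the realizability of all $Q^2$ ordered pairs from $2^L\ge 2Q^2$, are exactly the points that needed checking, and they check out). In short: the paper trades a simpler alphabet map for a slightly more delicate minimality transfer, while you trade a trivial minimality transfer for a more elaborate alphabet map; both yield the lemma, and your version arguably isolates more transparently why property (1) is automatic.
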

	
	\begin{proof}
		Take an integer $s'\ge 1$ with $2^{s'}>Q$. 
		Then choose $s\ge 1$ large enough and distinct $A,B\in\{0,1\}^s$ which are concatenations of words in $\{0,1\}^{s'}$ and include all concatenations of any two words in $\{0,1\}^{s'}$, that is, for each $w=A,B$,
		\[
		\{w|_{[is',(i+2)s')}:0\le i\le\frac{s}{s'}-2\}=\{uv:u,v\in\{0,1\}^{s'}\}.
		\]
		
		By Theorem \ref{l:mul_min_shi}, there is a minimal subshift $(X,\sigma)$ which is doubly minimal and $A,B\in\mathcal{L}(X)$.
		Choose a bijection $\xi: \{0,1\}^{s'}\to \NN{2^{s'}}$.
		Define $\xi^\Z: \{0,1\}^{\Z}\to \NN{2^{s'}}^{\Z}$ by
		\[
		\xi^\Z((x_i)_{i\in\Z})=(\xi(x|_{[is',is'+s')}))_{i\in\Z}.
		\]
		So $\xi^\Z$ is a continuous bijection and $\xi^\Z\circ\sigma^{s'}=\sigma\circ\xi^\Z$.
		Choose $x_*\in[A]\cap X$ and let $y_*=\xi^\Z(x_*)\in \NN{2^{s'}}^{\Z}$ and $Y=\overline{\mathcal{O}}(y_*,\sigma)\subset \NN{2^{s'}}^{\Z}$.
		Since $(X,\sigma)$ is doubly minimal, by \cite{AM85}, $(X^d,\sigma_d)$ is minimal for all $d$, which implies that $(Y^d,\sigma_d)$ is also minimal for all $d$.
		
		Finally, choose a surjection $\eta: \NN{2^{s'}}\to \NN{Q}$, define $\eta^\Z: \NN{2^{s'}}^\Z \to \NN{Q}^\Z$ by $\eta^\Z((x_i)_{i\in\Z})=(\eta(x_i))_{i\in\Z}$ and let $Z:=\eta^\Z(Y)$.
		By multiple minimality of $(Y,\sigma)$, (1) is proved.
		Since $A$ is a concatenations of all words in $\{0,1\}^{s'}$ and includes all concatenations of any two words in $\{0,1\}^{s'}$, (2) is proved.
		So $(Z,\sigma)$ is as required. 
	\end{proof}
	Next, we apply Lemma \ref{l:mul_min_shi2} to establish the following weaker combinatorial result, which encapsulates the core idea of the proof strategy.
	
	\begin{lemma}
		Let $d\ge 1$ and $Q\ge 1$. Then there are $n> Q$ and a partition 
		\[
		\NNo{n}=\biguplus_{q=1}^{Q}R_q
		\]
		such that for any $0\le l\le n$ and $1\le q\le Q$, there is $K$ such that for $1\le i\le d$, 
		\[
		iK+l\in R_q\cup(R_q+n+1).
		\]
	\end{lemma}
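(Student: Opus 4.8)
\emph{Proof plan.} The plan is to take the partition to be the family of level sets of a long finite word read off an auxiliary minimal subshift on $Q$ symbols whose $d$-fold product is minimal; the second copy $R_q+n+1$ will absorb the ``wraparound'' caused by reading that word periodically. First I would apply Lemma~\ref{l:mul_min_shi2} to the given $Q$, obtaining a minimal subshift $Z\subseteq\{1,\dots,Q\}^{\Z}$ with $(Z^{d'},\sigma_{d'})$ minimal for every $d'\ge1$ and with every symbol of $\{1,\dots,Q\}$ occurring in $Z$ (apply (2) with $a=b=q$). Fix $\zeta\in Z$. For each $q$ the set $[q]^{(d)}:=[q]\times\cdots\times[q]$ ($d$ factors) is nonempty and open in $Z^{d}$, so minimality of $(Z^{d},\sigma_{d})$ together with compactness provides $N_q$ such that every point of $Z^{d}$ enters $[q]^{(d)}$ at some time in $\{1,\dots,N_q\}$. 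Set $N:=\max_q N_q$. Applying this to the diagonal point $(\sigma^{l}\zeta)^{(d)}$ and expanding $\sigma_{d}^{k}=\sigma^{k}\times\sigma^{2k}\times\cdots\times\sigma^{dk}$ yields the key statement: for every $l\ge0$ and every $q$ there is $K\in\{1,\dots,N\}$ with $\zeta_{l+iK}=q$ for all $1\le i\le d$. Routing through the product system and compactness is what makes $N$ uniform in $l$ and $q$; this is the ``multiple recurrence in a bounded window'' underlying the lemma.

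Next I would fix the length $n+1$. A finite window of $\zeta$ is not periodic, whereas $R_q\cup(R_q+n+1)$ has built-in period $n+1$, so I choose $n+1$ to be a recurrence time of $\zeta$: since $Z$ is minimal, the word $\zeta|_{[0,dN)}$ recurs syndetically in $\zeta$ and each symbol occurs in $\zeta$ with bounded gaps, so one may pick an integer $p>\max(dN,\,Q+1)$ with $\zeta|_{[0,dN)}=\zeta|_{[p,\,p+dN)}$ (equivalently $\zeta_m=\zeta_{m+p}$ for $0\le m<dN$) and with every symbol occurring in $\zeta|_{[0,p)}$. Put $n:=p-1$, so that $n>Q$, and define $R_q:=\{\,0\le j\le n:\zeta_j=q\,\}$ for $1\le q\le Q$; these sets are disjoint, nonempty, and partition $\{0,1,\dots,n\}$.

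It remains to verify the conclusion. Fix $0\le l\le n$ and $1\le q\le Q$, and choose $K\in\{1,\dots,N\}$ with $\zeta_{l+iK}=q$ for $1\le i\le d$. For each such $i$ we have $1\le l+iK\le n+dN\le 2n$, using $dN\le p-1=n$. If $l+iK\le n$, then $\zeta_{l+iK}=q$ forces $l+iK\in R_q$. If instead $n+1\le l+iK\le 2n$, set $m:=l+iK-(n+1)$; then $0\le m<dN$, so $\zeta_m=\zeta_{m+p}=\zeta_{l+iK}=q$ by the choice of $p$, hence $m\in R_q$ and $l+iK\in R_q+n+1$. In either case $iK+l\in R_q\cup(R_q+n+1)$, as required. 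The two places needing care are the uniform window bound $N$, which rests on compactness and the minimality of $Z^{d}$ from Lemma~\ref{l:mul_min_shi2}(1), and the synchronisation of the period of $R_q\cup(R_q+n+1)$ with $\zeta$ obtained by taking $n+1$ to be a sufficiently large recurrence time; everything else is bookkeeping with the inequalities $dN\le n$ and $l\le n$.
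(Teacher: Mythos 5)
Your proof is correct, and while it rests on the same two pillars as the paper's argument (Lemma~\ref{l:mul_min_shi2} plus minimality of $(Z^d,\sigma_d)$, with the $R_q$ defined as level sets of a window of a point of $Z$), the execution is genuinely different and simpler in two places. First, you return the diagonal point $(\sigma^l\zeta)^{(d)}$ directly to the cylinder $[q]^d$, using compactness to get a uniform bound $N$ on the return time; the paper instead returns to a long cylinder $[z_*|_{[0,L]}]^d$ around a reference point and then translates into $[x_0]$ via an auxiliary finite cover $I$ with prescribed times $l_x, n_x$. Second, you handle the wraparound by choosing $n+1=p$ to be a recurrence time of the whole word $\zeta|_{[0,dN)}$, so that the overflow positions $n+1,\dots,n+dN$ carry the same symbols as $0,\dots,dN-1$ and the membership $iK+l\in R_q+n+1$ is immediate; the paper instead pigeonholes on the return-time vectors $\mathbf{m_{L_1}}=\mathbf{m_{L_2}}$ at the two endpoints, which is heavier. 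Your bookkeeping checks out: $dN\le p-1=n$ gives $l+iK\le 2n$, and $m=l+iK-(n+1)\in[0,dN)$ lands in the matched block, so $\zeta_m=\zeta_{l+iK}=q$. (Two cosmetic points: to get $K\ge 1$ rather than $K\ge 0$ from the covering $\bigcup_{k=0}^{N_q}\sigma_d^{-k}[q]^d=Z^d$, apply it to $\sigma_d$ of the point or enlarge $N_q$ by one; and nonemptiness of $[q]\cap Z$ does come from part (2) of Lemma~\ref{l:mul_min_shi2}.) The trade-off is that the paper's more elaborate scaffolding --- many return times $n^{(1)}_x,\dots,n^{(4cd+1)}_x$ to the same cylinder and the endpoint synchronization --- is exactly what generalizes to the stronger Lemma~\ref{l:combin}, where one must survive deleting up to $c$ elements from each $R_q$ and control two consecutive coordinates; your direct return to $[q]^d$ would need to be re-engineered for that, but for the weak statement as given your route is shorter and fully rigorous.
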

	
	\begin{proof}
		Fix $d\ge 1$ and $Q\ge 1$. 
		Let $(Z,\sigma)$ be as in Lemma \ref{l:mul_min_shi2} for $Q$.
		We will use the multiple minimality of $(Z,\sigma)$ to prove the lemma.
		For any $x\in Z$, there is $n_x>0$ such that 
		\[
		x_0=x_{n_x}=\cdots=x_{dn_x}.
		\]
		Then 
		\[
		\bigcup_{x\in Z}[x|_{[0,dn_x]}]=Z.
		\]
		So there is a finite subset $I\subset Z$ such that 
		\[
		\bigcup_{x\in I}[x|_{[0,dn_x]}]=Z.
		\]
		Since for any $1\le a\le Q$, $[a]\cap Z\neq\emptyset$, we have $\{x_0:x\in I\}=\NN{Q}$.
		
		Fix $z_*\in Z$.
		For each $x\in I$, there is $l_x>0$ such that $\sigma^{l_x}z_*,\dots,\sigma^{dl_x}z_*\in[x|_{[0,dn_x]}]$.
		Let $L=\max\{dl_x+dn_x:x\in I\}$.
		Since $(Z^d,\sigma_d)$ is minimal, there is $M>0$ such that
		\[
		\bigcup_{n=0}^{M}\sigma_d^{-n}[z_*|_{[0,L]}]^d=Z^d.
		\]
		For integer $H\geq 1$, there exists $m_{t,H}\in [0,M]\cap \N$ such that $(\sigma^{H-t}z_*)^{(d)}\in\sigma_d^{-m_{t,H}}[z_*|_{[0,L]}]^d$ for all $1\le t \le dM+L$. Let $\mathbf{m_H}=(m_{1,H},m_{2,H},\cdots,m_{dM+l,H})$. Then we can find $L_1<L_2$ such that $L_2-L_1>dM+L$ and $\mathbf{m_{L_1}}=\mathbf{m_{L_2}}$. We denote $(m_{1},m_{2},\cdots,m_{dM+l})=\mathbf{m_{L_1}}=\mathbf{m_{L_2}}$. Then
		\[
		(\sigma^{L_1-t}z_*)^{(d)},(\sigma^{L_2-t}z_*)^{(d)}\in\sigma_d^{-m_t}[z_*|_{[0,L]}]^d\text{ for all }1\le t\le dM+L.
		\]
		
		Then let $n=L_2-L_1-1$,
		\[
		\NNo{n}=\biguplus_{a=1}^{Q}\{k\in \NNo{n}: \sigma^{L_1+k}z_*\in [a]\}:=\biguplus_{a=1}^{Q}R_a.
		\]
		
		Fix $1\le a\le Q$ and $0\le l\le n$.
		Divide $l$ into 2 cases:
		\begin{itemize}
			\item[Case 1:] $0\le l\le n-dM-L$. 
			Since 
			\[
			\bigcup_{n=0}^{M}\sigma_d^{-n}[z_*|_{[0,L]}]^d=Z^d,
			\]
			there is $0\le m\le M$, 
			\[
			(\sigma^{L_1+l}z_*)^{(d)}\in\sigma_d^{-m}[z_*|_{[0,L]}]^d,
			\]
			that is, for $1\le i\le d$,
			\[\sigma^{L_1+l+im}z_*\in [z_*|_{[0,L]}].\]
			Then for $x\in I$ with $x_0=a$, we have
			\[\sigma^{L_1+l+im+i(l_x+n_x)}z_*\in[x_0]=[a].\]
			Since $l\le n-dM-L$, we have 
			$l+im+i(l_x+n_x)\in R_a$.
			
			\item[Case 2:] $n-dM-L<l\le n$. 
			Let $t=n+1-l\le dM+L$.
			Then we have 
			\[\sigma^{L_1+l+im_t}z_*\in [z_*|_{[0,L]}]\]
			for $1\le i\le d$.
			Then for $x\in I$ with $x_0=a$, we have
			\[\sigma^{L_1+l+im_t+i(l_x+n_x)}z_*\in[x_0]=[a]\]
			for $1\le i\le d$.
			For $1\le i\le d$ with $l+im_t+i(l_x+n_x)\le n$, we have already know $l+im_t+i(l_x+n_x)\in R_a$.
			For $1\le i\le d$ with $l+im_t+i(l_x+n_x)> n$, noticing that 
			\[(\sigma^{L_1-t}z_*)^{(d)},(\sigma^{L_2-t}z_*)^{(d)}\in\sigma_d^{-m_t}[z_*|_{[0,L]}]^d\]
			and 
			$L_1-t=L_1+l-(L_2-L_1)=L_1+l-n-1$,
			we have 
			\[\sigma^{L_1+l-n-1}z_*\in\sigma_d^{-m_t}[z_*|_{[0,L]}],\]
			that is, 
			\[\sigma^{L_1+l-n-1+im_t+i(l_x+n_x)}z_*\in[x_0]=[a].\]
			So we have $l-n-1+im_t+i(l_x+n_x)\in R_a$.
		\end{itemize}
		
		Sum up with above cases, there are $0\le m\le M$ such that for $1\le i\le d$, 
		\[l+i(m+l_x+n_x)\in R_a\cup(R_a+n+1)\]
		where $x\in I$ with $x_0=a$.
		
	\end{proof}
	
	Indeed, by choosing more $n_x$ that $x$ hits cylinder set $[x_0]$ and longer $L_2-L_1$, we can prove the following stronger combinatorial lemma.
	For $c\ge 1$ and finite subset $R\subset \N$, write
	\[\mathcal{R}_c(R)=\{R'\subset \N: \#R-\#(R\cap R')<c.\}\]
	
	\begin{lemma}\label{l:combin}
		Let positive integers $d\ge 1$, $c\ge 1$, $N\ge 1$ and $Q\ge 1$. 
		Then there are $n\ge N$ and a partition 
		\[\NNo{n}=\biguplus_{q=1}^{Q}R_q\]
		satisfying the following property:
		\begin{itemize}
			\item[(i)] $\#R_q\ge \sqrt{n}-1$ for all $1\le q\le Q$, and
			\item[(ii)] for any $0\le l\le n$, $1\le q,q'\le Q$, $R^{(1)}_q,R^{(2)}_q,\dots,R^{(2d)}_q\in \mathcal{R}_c(R_{q})$ and $R^{(1)}_{q'},R^{(2)}_{q'},\dots,R^{(2d)}_{q'}\in \mathcal{R}_c(R_{q'})$,
			there is an positive integer $K$ such that for $1\le i\le d$,
			\[iK+l\in R^{(2i-1)}_q\cup (R^{(2i)}_q+n+1)\]
			and 
			\[iK+l+1\in R^{(2i-1)}_{q'}\cup (R^{(2i)}_{q'}+n+1).\]
		\end{itemize}
	\end{lemma}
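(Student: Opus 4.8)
The plan is to rerun the proof of the preceding (unnamed) lemma with three modifications: work with two-letter words, so that the ``$+1$'' conclusion can be read off; for each source word extract \emph{many} multiple-return times, producing a whole pool of candidate shifts $K$ rather than a single one; and take $n$ (hence the gap $L_2-L_1$) large, both to force $\#R_q\ge\sqrt n-1$ and to leave enough room in the pool to absorb the $4d$ perturbations of $R_q$ and $R_{q'}$.

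\emph{Set-up.} Fix $(Z,\sigma)\subset\NN{Q}^{\Z}$ as in Lemma~\ref{l:mul_min_shi2}, fix $z_*\in Z$, and put $T:=4d^2c$. Since $(Z^d,\sigma_d)$ is minimal, every $x\in Z$ has a return time $n_x\ge1$ with $\sigma^{jn_x}x\in[x|_{[0,2)}]$ for $1\le j\le d$; by compactness choose a finite $I'\subset Z$ with $\bigcup_{x\in I'}[x|_{[0,dn_x+1]}]=Z$, and note that the two-letter words $\{x|_{[0,2)}:x\in I'\}$ exhaust all two-letter words over $\NN{Q}$, since every $[ab]$ meets $Z$ by property~(2) of Lemma~\ref{l:mul_min_shi2}. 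Using minimality of $(Z^d,\sigma_d)$ again, for each $x\in I'$ choose $T$ distinct return times $n^{(1)}_x<\cdots<n^{(T)}_x$ still satisfying $\sigma^{jn^{(t)}_x}x\in[x|_{[0,2)}]$ for $1\le j\le d$, together with $l_x\ge1$ with $\sigma^{jl_x}z_*\in[x|_{[0,dn^{(T)}_x+1]}]$ for $1\le j\le d$; set $L:=\max_{x\in I'}\bigl(d(l_x+n^{(T)}_x)+1\bigr)$. Choose $M$ with $\bigcup_{m=0}^{M}\sigma_d^{-m}[z_*|_{[0,L]}]^d=Z^d$, form $\mathbf{m_H}=(m_{t,H})_{1\le t\le dM+L}$ with $(\sigma^{H-t}z_*)^{(d)}\in\sigma_d^{-m_{t,H}}[z_*|_{[0,L]}]^d$, and pigeonhole over $H\ge1$ to obtain $L_1<L_2$ with $\mathbf{m_{L_1}}=\mathbf{m_{L_2}}$; choosing $L_2-L_1$ large, also arrange that $n:=L_2-L_1-1$ satisfies $n\ge N$, $n\ge dM+L$ and $n\ge G^2$, where $G$ is a common syndeticity constant of the sets $\{k\in\Z:\sigma^kz_*\in[q]\}$, $1\le q\le Q$ (syndetic since $(Z,\sigma)$ is minimal). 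Define $R_q:=\{k\in\NNo{n}:\sigma^{L_1+k}z_*\in[q]\}$; this partitions $\NNo{n}$, and since $n\ge G^2$ the intersection of any window of $n+1$ consecutive integers with $\{k:\sigma^kz_*\in[q]\}$ has at least $n/G-1\ge\sqrt n-1$ elements, so $\#R_q\ge\sqrt n-1$, which is (i).

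\emph{The pool of shifts.} For (ii), fix $0\le l\le n$ and $1\le q,q'\le Q$, and let $x:=x_{q,q'}\in I'$ have $x|_{[0,2)}=qq'$. Split into Case~1 ($0\le l\le n-dM-L$, taking $m\le M$ with $(\sigma^{L_1+l}z_*)^{(d)}\in\sigma_d^{-m}[z_*|_{[0,L]}]^d$) and Case~2 ($n-dM-L<l\le n$, taking $m:=m_{t^*}$ for $t^*:=n+1-l\le dM+L$, using $\sigma^{L_2-t^*}z_*=\sigma^{L_1+l}z_*$ and the two-sidedness of $Z$ to treat the wrap-around via $\sigma^{L_1-t^*}z_*=\sigma^{L_1+l-n-1}z_*$). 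In both cases, reading the letters of $z_*$ at positions $i(l_x+n^{(t)}_x)$ and $i(l_x+n^{(t)}_x)+1$ off the word $z_*|_{[0,L]}$ (these positions are $\le L$) and invoking the return-time and $l_x$-properties of $x$ shows that the $T$ distinct positive integers $K_t:=m+l_x+n^{(t)}_x$, $1\le t\le T$, satisfy, for all $1\le i\le d$,
\[
iK_t+l\in R_q\cup(R_q+n+1)\qquad\text{and}\qquad iK_t+l+1\in R_{q'}\cup(R_{q'}+n+1).
\]
The bound $iK_t+l\le 2n-1$ (so that every index occurring lies in $[0,2n+1]$) comes from $n\ge dM+L$, and the boundary value $iK_t+l=n$ is exactly where the two-sided structure is used, since there it forces $0\in R_{q'}$.

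\emph{Absorbing perturbations.} Put $F_q:=\bigcup_{i=1}^{d}(R_q\setminus R^{(2i-1)}_q)\cup\bigcup_{i=1}^{d}\bigl((R_q\setminus R^{(2i)}_q)+n+1\bigr)$ and define $F_{q'}$ likewise; by the definition of $\mathcal{R}_c$, $\#F_q,\#F_{q'}\le 2d(c-1)$. Call $K_t$ \emph{good} if $iK_t+l\notin F_q$ and $iK_t+l+1\notin F_{q'}$ for all $1\le i\le d$. A good $K_t$ works: for each $i$, if $iK_t+l\le n$ then it lies in $R_q$ but not in $R_q\setminus R^{(2i-1)}_q$, hence in $R^{(2i-1)}_q$, while if $iK_t+l>n$ then $iK_t+l-(n+1)$ lies in $R_q$ but not in $R_q\setminus R^{(2i)}_q$, hence $iK_t+l\in R^{(2i)}_q+n+1$; the $q'$-statement is symmetric. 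Since $t\mapsto iK_t+l$ is injective for fixed $i$, each element of $F_q$ (resp.\ $F_{q'}$) spoils at most $d$ of the $K_t$, so at most $d\#F_q+d\#F_{q'}\le 4d^2(c-1)<T$ of them are bad; hence a good $K_t$ exists and $K:=K_t$ is as required. I expect the main obstacle to be the Case-1/Case-2 bookkeeping of the third paragraph: threading the single point $z_*$ and the enlarged constant $L$ (built from the \emph{largest} return time $n^{(T)}_x$) through both cases so that they serve all pairs $q,q'$ and all $1\le i\le d$ at once, and in particular obtaining the shifted $q'$-membership at the boundary index $iK_t+l=n$.
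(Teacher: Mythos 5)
Your proposal is correct and follows essentially the same route as the paper's proof: the same subshift $Z$ from Lemma \ref{l:mul_min_shi2}, the same pigeonholed pair $L_1<L_2$ with matching return-time vectors to handle the wrap-around modulo $n+1$, and the same idea of extracting a pool of multiple-return times $n^{(t)}_x$ large enough to absorb the $\mathcal{R}_c$-perturbations. The only differences are cosmetic bookkeeping: you take a pool of size $4d^2c$ and count bad indices via the exceptional sets $F_q,F_{q'}$ (the paper uses $4cd+1$ return times and the tighter per-$i$ count $\#J_i<c$), and you derive (i) from a syndeticity constant of $N_\sigma(z_*,[q])$ rather than from the paper's choice $n\ge(dM+L)^2$ — both work.
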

	
	\begin{proof}
		Fix $d\ge 1$, $c\ge 1$, $N\ge 1$ and $Q\ge 1$.
		Let $(Z,\sigma)$ be as in Lemma \ref{l:mul_min_shi2} for $d$ and $Q$.
		
		For any $x\in Z$, since $(Z^d,\sigma_d)$ is minimal,
		choose $4cd+1$ positive integers $n^{(1)}_x,n^{(2)}_x,\dots,n^{(4cd+1)}_x\in N_{\sigma_d}(x^{(d)},[x_0x_1]^d)$ such that 
		\[dn^{(i)}_x<n^{(i+1)}_x,\,1\le i\le 4cd.\]
		
		Then we have 
		\[\bigcup_{x\in Z}[x|_{[0,dn^{(4cd+1)}_x]}]=Z.\]
		So there is a finite $I\subset Z$ such that 
		\[\bigcup_{x\in I}[x|_{[0,dn^{(4cd+1)}_x]}]=Z.\]
		Since for any word $w\in\NN{Q}^2$, $[w]\cap Z\neq\emptyset$, we have $\{[x_0x_1]:x\in I\}=\{[w]:w\in\NN{Q}^2\}$.
		
		Fix $z_*\in Z$.
		For $x\in I$, by minimality of $(Z^d,\sigma_d)$, there is $l_x>dn^{(4cd+1)}_x$ such that 
		\[\sigma^{l_x}_d\left((z_*)^{(d)}\right)\in[x|_{[0,dn^{(4cd+1)}_x]}]^d.\]
		Then for any $n^{(i)}_x$, $1\le i\le 4cd+1$, we have $\sigma^{l_x+n^{(i)}_x}_d((z_*)^{(d)})\in[x_0x_1]^d$.
		
		Let $L=2+\max\{d(l_x+n^{(4cd+1)}_x):x\in I\}$.
		By minimality of $(Z^d,\sigma_d)$, there is $M>0$ such that
		\begin{equation}\label{e:M}
			\bigcup_{m=0}^{M}\sigma_d^{-m}[z_*|_{[0,L]}]=Z.
		\end{equation}
		
		Then there are $0\le m_1,m_2,\dots,m_{dM+L}\le M$ and $L_1<L_2$ such that $L_2-L_1>\max\{(dM+L)^2,N\}$ and 
		\begin{equation}\label{e:m_t}
			(\sigma^{L_1-t}z_*)^{(d)},(\sigma^{L_2-t}z_*)^{(d)}\in\sigma_d^{-m_t}[z_*|_{[0,L]}]^d\text{ for all }1\le t\le dM+L.
		\end{equation}
		
		Then let $n=L_2-L_1-1$ and for $1\le q\le Q$, 
		\[R_q=\{k\in \NNo{n}: \sigma^{L_1+k}z_*\in [q]\}.\]
		So we have $\NNo{n}=\biguplus_{q=1}^{Q}R_q$.
		
		We will show this partition is as required.
		
		To prove (i), fix $1\le q\le Q$.
		Since $n\ge (dM+L)^2$, we have
		\[D:=\left\lfloor\frac{n}{dM+L}\right\rfloor\ge\left\lfloor\sqrt{n}\right\rfloor\ge \sqrt{n}-1\]
		where $\lfloor x\rfloor$ denote the integer part of real number $x$.
		And then for $0\le D'< D$, $[D'(dM+L),(D'+1)(dM+L))\cap\N\subset \NNo{n}$.
		For each $0\le D'< D$, there is $0\le m\le M$ such that  $(\sigma^{L_1+D'(dM+L)}z_*)^{(d)}\in\sigma_d^{-m}[z_*|_{[0,L]}]^d$.
		So we have $R_q\cap [D'(dM+L),(D'+1)(dM+L))\neq \emptyset$ for all $0\le D'< D$.
		So $\#R_q\ge D\ge \sqrt{n}-1$.
		Therefore, (i) is proved.
		
		To prove (ii), fix $0\le l\le n$, $1\le q,q'\le Q$, $R^{(1)}_q,R^{(2)}_q,\dots,R^{(2d)}_q\in \mathcal{R}_c(R_{q})$ and $R^{(1)}_{q'},R^{(2)}_{q'},\dots,R^{(2d)}_{q'}\in \mathcal{R}_c(R_{q'})$.
		Divide $l$ into 2 cases:
		\begin{itemize}
			\item[Case 1:] $0\le l\le n-dM-L$.
			Since \[\bigcup_{m=0}^{M}\sigma_d^{-m}[z_*|_{[0,L]}]^d=Z^d,\]
			there is $0\le m\le M$, 
			\[(\sigma^{L_1+l}z_*)^{(d)}\in\sigma_d^{-m}[z_*|_{[0,L]}]^d,\]
			that is, for $1\le i\le d$,
			\[\sigma^{L_1+l+im}z_*\in [z_*|_{[0,L]}].\]
			Then for $x\in I$ with $x_0=q$ and $x_1=q'$, we have
			\[\sigma_d^{m+l_x+n^{(j)}_x}(\sigma^{L_1+l}z_*)^{(d)}\in[x_0x_1]^d=[qq']^d\]
			for $1\le j\le 4cd+1$.
			Since $d(m+l_x+n^{(4cd+1)}_x)<dM+L$,
			we have 
			\[
			\begin{aligned}
				&\{m+l_x+n^{(j)}_x:1\le j\le 4cd+1\}\\
				\subset&\{m'\in\NNo{n}:l+im'\in R_{q},\,l+im'+1\in R_{q'}\text{ for all }1\le i\le d\}.
			\end{aligned}
			\]
			Since $l_x>dn^{(4cd+1)}_x$ and $dn^{(j)}_x<n^{(j+1)}_x$ for $1\le j< 4cd+1$, then
			\[l+im+i(l_x+n^{(j)}_x),1\le i\le d, 1\le j\le 4cd+1\text{ are distinct.}\]
			
			For $1\le j\le 4cd+1$ satisfying that there is some $1\le i\le d$ such that $l+im+i(l_x+n^{(j)}_x)\notin R^{(2i-1)}_{q}$ or $l+im+i(l_x+n^{(j)}_x)+1\notin R^{(2i-1)}_{q'}$, since $R^{(2i-1)}_{q}\in\mathcal{R}_c(R_{q})$ and $R^{(2i-1)}_{q'}\in\mathcal{R}_c(R_{q'})$, the amount of those $j$ is not greater than $2cd$.
			
			More strictly, for $1\le i\le d$, set 
			\[J_{i}=\{1\le j\le 4cd+1:l+im+i(l_x+n^{(j)}_x)\notin R^{(2i-1)}_{q}\}\]
			and
			\[J'_{i}=\{1\le j\le 4cd+1:l+im+i(l_x+n^{(j)}_x)+1\notin R^{(2i-1)}_{q'}\}.\]
			Since $R^{(2i-1)}_{q}\in\mathcal{R}_c(R_{q})$, we have $\#J_i<c$.
			By a similar argument, we have $\#J'_i<c$.
			So
			\[\#\left(\bigcup_{i=1}^{d}(J_i\cup J'_i)\right)\le \sum_{i=1}^{d}(\#J_i+\#J'_i)<2cd\]
			.
			
			Therefore, there is $1\le j\le 4cd+1$ such that for $1\le i\le d$,
			\[l+im+i(l_x+n^{(j)}_x)\in R^{(2i)}_{q}\text{ and } l+im+i(l_x+n^{(j)}_x)+1\in R^{(2i)}_{q'}.\]
			
			\item[Case 2:] $n-dM-L< l\le n$. 
			Let $t=n+1-l\in [1,dM+L]$.
			So
			\[(\sigma^{L_1-t}z_*)^{(d)},(\sigma^{L_2-t}z_*)^{(d)}\in \sigma_d^{-m_t}[z_*|_{[0,L]}]^d.\]
			Fix $x\in I$ with $x_0=q$ and $x_1=q'$.
			For $1\le j\le 4cd+1$ and $1\le i\le d$, we have
			\[\sigma^{L_1-t+im_t+i(l_x+n^{(j)}_x)}z_*,\sigma^{L_2-t+im_t+i(l_x+n^{(j)}_x)}z_*\in[x_0x_1]=[qq'].\]
			Noting that $L_2-t=L_1+l$ and $L_1-t=L_1+l-(n+1)$, 
			for $1\le j\le 4cd+1$ and $1\le i\le d$, we have 
			\[
			l+im_t+i(l_x+n^{(j)}_x)\in
			\left\{
			\begin{aligned}
				& R_q,&&l+im_t+i(l_x+n^{(j)}_x)\le n,\\
				& R_q+n+1,&&l+im_t+i(l_x+n^{(j)}_x)>n,
			\end{aligned}
			\right.
			\]
			and 
			\[
			l+im_t+i(l_x+n^{(j)}_x)+1\in
			\left\{
			\begin{aligned}
				& R_{q'},&&l+im_t+i(l_x+n^{(j)}_x)< n,\\
				& R_{q'}+n+1,&&l+im_t+i(l_x+n^{(j)}_x)\ge n.
			\end{aligned}
			\right.
			\]
			For $1\le i\le d$, set 
			\[J_{2i-1}=\{1\le j\le 4cd+1: n\ge l+im_t+i(l_x+n^{(j)}_x)\notin R^{(2i-1)}_q\},\]
			\[J_{2i}=\{1\le j\le 4cd+1: n<l+im_t+i(l_x+n^{(j)}_x)\notin R^{(2i)}_{q}+n+1\},\]
			\[J'_{2i-1}=\{1\le j\le 4cd+1: n\ge l+im_t+i(l_x+n^{(j)}_x)+1\notin R^{(2i-1)}_{q'}\},\]
			and
			\[J'_{2i}=\{1\le j\le 4cd+1: n<l+im_t+i(l_x+n^{(j)}_x)+1\notin R^{(2i)}_{q'}+n+1\}.\]
			
			Since $R^{(2i-1)}_q\in\mathcal{R}_c(R_q)$, we have $\#J_{2i-1}<c$.
			By similar arguments, we have $\#J_{2i},\#J'_{2i-1},\#J'_{2i}<c$.
			So
			\[\#\left(\bigcup_{i=1}^{2d}J_{i}\cup J'_{i}\right)\le \sum_{i=1}^{2d}(\#J_{i}+\#J'_{i})<4cd.\]
			So there is $1\le j\le 4cd+1$ such that for $1\le i\le d$,
			\[l+im_t+i(l_x+n^{(j)}_x)\in R^{(2i-1)}_q\cup (R^{(2i)}_q+n+1)\]
			and
			\[l+im_t+i(l_x+n^{(j)}_x)+1\in R^{(2i-1)}_{q'}\cup (R^{(2i)}_{q'}+n+1).\]		
		\end{itemize}
		
		Sum up with above 2 cases, we prove the partition is as required, which ends the proof.
	\end{proof}
	\section{Proof of Theorem 1.2}In this section, we will construct a minimal subshift which is an open proximal extension of its maximal equicontinuous factor.
	We will also show that all points in this subshift are multiply minimal.
	\subsection{Inductive construction of words}In this subsection, we construct a minimal subshift inductively.
	Given a finite alphabet $\Sigma=\{0,1,2\}$, our example is a subshift of $\{0,1,2\}^\N$.
	
	Let word $A_0=2$.
	Choose $N_*>100$ such that $2^{\sqrt{n}-1}>n^4+2$ for all $n\ge N_*$.
	
	\newcommand{\dropc}{{6}}
	
	{\bf Step 0:} 
	By Lemma \ref{l:combin} for $d=1$, $c=\dropc$, $N=N_*$ and $Q=3$, there is $n_0>N_*$ and a partition 
	\[\NNo{n_0}=R_{*,0}\cup\bigcup_{a\in\{0,1\}}R_{a,0}\]
	satisfying the properties in Lemma \ref{l:combin}.
	Let 
	\[
	W_0=\{
	A_0w^{(0)}_1\cdots w^{(0)}_{n_0}:\,
	w^{(0)}_l\in\{0,1\},1\le l\le n_0\}.
	\]
	For $\phi\in S^2(n_0)$, define $P^{(0)}_{\phi}:W_0\to W_0$ by
	\[
	A_0w^{(0)}_1\cdots w^{(0)}_{n_0}\mapsto A_0w^{(0)}_{\phi(1)}w^{(0)}_{\phi(2)}\cdots w^{(0)}_{\phi(n_0)}.
	\]
	Let
	\[
	\mathcal{W}_0=\left\{
	\begin{aligned}
		P^{(0)}_{\phi}(A_0w^{(0)}_1\cdots w^{(0)}_{n_0}):\,
		&w^{(0)}_l=a\text{ for }l \in R_{a,0},a\in\{0,1\},\\
		&w^{(0)}_l\in\{0,1\}\text{ for }l \in R_{*,0},\phi\in S^2(n_0)
	\end{aligned}
	\right\}\subset W_0.
	\]
	So for any word $w\in\mathcal{W}_0$, we have 
	\[\{n\in \NNo{n_0}: w|_{[n,n+1)}=a\}\in \mathcal{R}_{\dropc}(R_{a,0})\]
	for all $a\in\{0,1\}$.
	Since all words in $\mathcal{W}_0$ have same length, denote by $p_0=|\mathcal{W}_0|$ be the length of words in $\mathcal{W}_0$.
	
	\newcommand{\W}{\mathcal{W}}
	\newcommand{\Wp}{\W^{\mathrm{pre}}}
	\newcommand{\Ws}{\W^{\mathrm{suf}}}
	Fix $A_1=A_0w^{(*,0)}_1w^{(*,0)}_2\cdots w^{(*,0)}_{n_0}\in \W_0$ such that 
	\[
	\text{for }1\le l\le n_0,\,w^{(*,0)}_l\left\{
	\begin{aligned}
		&=a,&&l\in R_{a,0},a\in\{0,1\},\\
		&\in\{0,1\},&&l\in R_{*,0}.
	\end{aligned}
	\right.
	\]
	Then the set $\Wp_0:=\{P^{(0)}_{\phi}(A_1):\phi\in S^2(n_0)\}\subset \W_0$.
	Fix an enumeration $A^{(0)}_1=A_1,A^{(1)}_1,A^{(2)}_1,\dots,A^{(\hat n_0)}_1$ of $\Wp_0$.
	Since
	\[\#\W_0\ge 2^{\#R_{*,0}}\ge 2^{\sqrt{n_0}-1}>n^4_0+2>\hat n_0+1,\]
	the set $\Ws_0:=\W_0\setminus\Wp_0$
	contains at least 2 elements.
	
	{\bf Step 1:} 
	By Lemma \ref{l:combin} for $d=2$, $c=\dropc$, $N=N_*$ and $Q=1+\#\Ws_0$, there is $n_1>N_*$ and a partition 
	\[\NNo{n_1}=R_{*,1}\cup\bigcup_{a\in \Ws_0}R_{a,1}\]
	satisfying the properties in Lemma \ref{l:combin}.
	Let
	\[
	W_1=\{
	A^{(i)}_1w^{(1)}_1\cdots w^{(1)}_{n_1}:\,0\le i\le \hat n_0,\,w^{(0)}_l\in\Ws_0,1\le l \le n_1\}.
	\]
	For $\phi\in S^2(n_1)$, define $P^{(1)}_{\phi}:W_1\to W_1$ by
	\[
	A^{(i)}_1w^{(1)}_1\cdots w^{(1)}_{n_1}\mapsto A^{(i)}_1w^{(1)}_{\phi(1)}w^{(1)}_{\phi(2)}\cdots w^{(1)}_{\phi(n_1)}.
	\]
	Let
	\[
	\W_1=\left\{
	\begin{aligned}
		P^{(1)}_{\phi}(A^{(i)}_1w^{(1)}_1\cdots w^{(1)}_{n_1}):\,&0\le i\le \hat n_0,\phi\in S^2(n_1),\\
		&w^{(1)}_l=a\text{ for }l \in R_{a,1},a\in\Ws_0,\\
		&w^{(1)}_l\in\Ws_0\text{ for }l \in R_{*,1}
	\end{aligned}
	\right\}\subset W_1.
	\]
	So for any word $w\in\W_1$, we have 
	\[\{n\in\NNo{n_1}: w|_{[np_0,(n+1)p_0)}=a\}\in \mathcal{R}_{\dropc}(R_{a,1})\]
	for all $a\in\Ws_0$.
	Since all words in $\W_1$ have same length, denote by $p_1=|\W_1|$ be the length of words in $\W_1$.
	Fix $A_2=A_1w^{(*,1)}_1w^{(*,1)}_2\cdots w^{(*,1)}_{n_1}\in \W_1$ such that 
	\[
	\text{for }1\le l\le n_1,\,w^{(*,1)}_l\left\{
	\begin{aligned}
		&=a,&&l\in R_{a,1},a\in\Ws_0,\\
		&\in\Ws_0,&&l\in R_{*,1}.
	\end{aligned}
	\right.
	\]
	Then the set \[\Wp_1:=\{P^{(1)}_{\phi}(A^{(i)}_1w^{(*,1)}_1\cdots w^{(*,1)}_{n_1}):0\le i\le \hat n_0,\,\phi\in S^2(n_1)\}\subset \W_1.\]
	Fix an enumeration $A^{(0)}_2=A_2,A^{(1)}_2,A^{(2)}_2,\dots,A^{(\hat n_1)}_2$ of $\Wp_1$.
	Since
	\[\#\W_1\ge 2^{\#R_{*,1}}\ge 2^{\sqrt{n_1}-1}>n^4_1+2>\hat n_1+1,\]
	the set $\Ws_1:=\W_1\setminus\Wp_1$
	contains at least 2 elements.
	
	{\bf Step $k+1$:} Assume that, for some $k\ge 1$, we have defined
	integers $n_k>N_*$ and $\hat n_k$,
	sets $\Wp_k,\Ws_k\subset\W_k\subset W_k$ of words,
	the length $p_k=|\W_k|$ of words in $\W_k$,
	words $A^{(i)}_{k+1}$, $0\le i\le \hat n_{k}$,
	and a partition 
	\[\NNo{n_k}=R_{*,k}\cup\bigcup_{a\in \Ws_{k-1}}R_{a,k}\]
	satisfying Lemma \ref{l:combin}.
	
	By Lemma \ref{l:combin} for $d=k+2$, $c=\dropc$, $N=N_*$ and $Q=1+\#\Ws_k$, there are $n_{k+1}>N_*$ and a partition
	\[\NNo{n_{k+1}}=R_{*,k+1}\cup\bigcup_{a\in \Ws_{k}}R_{a,k+1}\]
	satisfying the properties in Lemma \ref{l:combin}.
	Let
	\[
	W_{k+1}=\{
	A^{(i)}_{k+1}w^{(k+1)}_1\cdots w^{(k+1)}_{n_{k+1}}:\,0\le i\le \hat n_k,
	w^{(k+1)}_l\in\Ws_{k},1\le l\le n_{k+1}\}.
	\]
	For $\phi\in S^2(n_{k+1})$, define $P^{(k+1)}_\phi:W_{k+1}\to W_{k+1}$ by
	\[
	A^{(i)}_{k+1}w^{(k+1)}_1\cdots w^{(k+1)}_{n_{k+1}}\mapsto A^{(i)}_{k+1}w^{(k+1)}_{\phi(1)}w^{(k+1)}_{\phi(2)}\cdots w^{(k+1)}_{\phi(n_{k+1})}.
	\]
	Let
	\[
	\W_{k+1}=\left\{
	\begin{aligned}
		P^{(k+1)}_{\phi}(A^{(i)}_{k+1}w^{(k+1)}_1\cdots w^{(k+1)}_{n_{k+1}}):\,&0\le i\le \hat n_k,\phi\in S^2(n_{k+1}),\\
		&w^{(k+1)}_l=a\text{ for }l \in R_{a,k+1},a\in\Ws_k,\\
		&w^{(k+1)}_l\in\Ws_k\text{ for }l \in R_{*,k+1}
	\end{aligned}
	\right\}\subset W_{k+1}.
	\]
	So for any word $w\in\W_{k+1}$, we have 
	\[\{n\in\NNo{n_{k+1}}: w|_{[np_k,(n+1)p_k)}=a\}\in \mathcal{R}_{\dropc}(R_{a,k+1})\]
	for all $a\in\Ws_{k}$.
	Since all words in $\W_{k+1}$ have same length, denote by $p_{k+1}=|\W_{k+1}|$ be the length of words in $\W_{k+1}$.
	
	Fix $A_{k+2}=A_{k+1}w^{(*,k+1)}_1w^{(*,k+1)}_2\cdots w^{(*,k+1)}_{n_{k+1}}\in \W_{k+1}$ such that 
	\[
	\text{for }1\le l\le n_{k+1},\,w^{(*,k+1)}_l\left\{
	\begin{aligned}
		&=a,&&l\in R_{a,k+1},a\in\Ws_{k},\\
		&\in\Ws_{k},&&l\in R_{*,k+1}.
	\end{aligned}
	\right.
	\]
	
	Then the set \[\Wp_{k+1}:=\{P^{(k+1)}_{\phi}(A^{(i)}_{k+1}w^{(*,k+1)}_1\cdots w^{(*,k+1)}_{n_{k+1}}):0\le i\le \hat n_k,\,\phi\in S^2(n_{k+1})\}\subset \W_{k+1}.\]
	
	Fix an enumeration $A^{(0)}_{k+2}=A_{k+2},A^{(1)}_{k+2},A^{(2)}_{k+2},\dots,A^{(\hat n_{k+1})}_{k+2}$ of the set $\Wp_{k+1}$.
	Since, by $n_{k+1}>N_*$,
	\[\#\W_{k+1}\ge (\hat n_{k}+1)2^{\#R_{*,k+1}}\ge (\hat n_{k}+1)2^{\sqrt{n_{k+1}}-1}>(\hat n_{k}+1)(n^4_{k+1}+2)>\hat n_{k+1}+1,\]
	the set
	$\Ws_{k+1}=\W_{k+1}\setminus\Wp_{k+1}$
	contains at least 2 elements.
	
	By the above process, we construct the followings: for $k\ge 0$,
	\begin{itemize}
		\item integers $n_k>N_*$ and $\hat n_k$;
		\item  maps $P^{(k)}_{\phi}:W_k\to W_k$, $\phi\in S^2(n_k)$, where 
		\[
		P^{(k)}_{\phi}:\,A^{(i)}_{k}w^{(k)}_1\cdots w^{(k)}_{n_{k}}\mapsto A^{(i)}_{k}w^{(k)}_{\phi(1)}w^{(k)}_{\phi(2)}\cdots w^{(k)}_{\phi(n_{k})};\]
		\item a partition
		\[\NNo{n_{k}}=R_{*,k}\cup\bigcup_{a\in \Ws_{k-1}}R_{a,k}\]
		satisfying Lemma \ref{l:combin} for $d=k+1$, $c=\dropc$, $N=N_*$ and $Q=1+\#\Ws_{k-1}$;
		\item word $A_{k+1}=A_{k}w^{(*,k)}_1\cdots w^{(*,k)}_{n_{k}}\in\W_{k}$ such that 
		\[
		\text{for }1\le l\le n_{k},\,w^{(*,k)}_l\left\{
		\begin{aligned}
			&=a,&&l\in R_{a,k},a\in\Ws_{k-1},\\
			&\in\Ws_{k-1},&&l\in R_{*,k};
		\end{aligned}
		\right.
		\]
		\item sets of words
		\[
		W_{k}=\{A^{(i)}_{k}w^{(k)}_1\cdots w^{(k)}_{n_{k}}:\,0\le i\le \hat n_{k-1},
		w^{(k)}_l\in\Ws_{k-1},1\le l\le n_{k}\},
		\]
		
		\[
		\W_{k}=\left\{
		\begin{aligned}
			P^{(k)}_{\phi}(A^{(i)}_{k}w^{(k)}_1\cdots w^{(k)}_{n_{k}}):\,&0\le i\le \hat n_{k-1},\phi\in S^2(n_{k}),\\
			&w^{(k)}_l=a\text{ for }l \in R_{a,k},a\in\Ws_{k-1},\\
			&w^{(k)}_l\in\Ws_{k-1}\text{ for }l \in R_{*,k}
		\end{aligned}
		\right\},
		\]
		\[
		\begin{aligned}
			\Wp_{k}&=\{P^{(k)}_{\phi}(A^{(i)}_{k}w^{(*,k)}_1\cdots w^{(*,k)}_{n_{k}}):0\le i\le \hat n_{k-1},\,\phi\in S^2(n_{k})\}\\
			&=\{A_{k+1}=A^{(0)}_{k+1},A^{(1)}_{k+1},\dots,A^{(\hat n_k)}_{k+1}\},
		\end{aligned}\]
		and $\Ws_{k}=\W_{k}\setminus\Wp_{k}$, where we set $\Ws_{-1}:=\{0,1\}$, $\hat n_{-1}:=0$, $A^{(0)}_0:=A_0$, $p_{-1}=1$ and $p_k=|W_k|$ the length of words in $W_k$.
	\end{itemize}
	The construction ensures the following properties: for $k\ge 0$,
	\begin{itemize}
		\item[(i)] $A_{k+1}|_{[0,p_{k-1})}=A_k$;
		\item[(ii)] $\Wp_{k}\biguplus\Ws_{k}=\W_{k}\subset W_k$;
		\item[(iii)] for any word $w\in\W_{k}$ and $a\in\Ws_{k-1}$, we have 
		\begin{equation}\label{e:W_k}
			\{n\in\NNo{n_{k}}: w|_{[np_{k-1},(n+1)p_{k-1})}=a\}\in \mathcal{R}_{\dropc}(R_{a,k}).
		\end{equation}
	\end{itemize}
	
	By (i), let $x^*=\lim_{k\to\infty}A_k$. Then $x^*$ has forms like:
	\[
	\begin{aligned}
		x^*&=\underset{A_1}{\underbrace{A_0w^{(*,0)}_1w^{(*,0)}_2\cdots w^{(*,0)}_{n_0}}}\underset{w^{(*,1)}_1}{\underbrace{A_0w^{(1,0)}_1w^{(1,0)}_2\cdots w^{(1,0)}_{n_0}}}\underset{w^{(*,1)}_2}{\underbrace{A_0w^{(2,0)}_1w^{(2,0)}_2\cdots w^{(2,0)}_{n_0}}}\cdots\\
		&=\underset{A_2}{\underbrace{A_1w^{(*,1)}_1w^{(*,1)}_2\cdots w^{(*,1)}_{n_1}}}\underset{w^{(*,2)}_1}{\underbrace{A^{(i_{1,1})}_1w^{(1,1)}_1w^{(1,1)}_2\cdots w^{(1,1)}_{n_1}}}\underset{w^{(*,2)}_2}{\underbrace{A^{(i_{2,1})}_1w^{(2,1)}_1w^{(2,1)}_2\cdots w^{(2,1)}_{n_1}}}\cdots\\
		&=\underset{A_3}{\underbrace{A_2w^{(*,2)}_1w^{(*,2)}_2\cdots w^{(*,2)}_{n_2}}}\underset{w^{(*,3)}_1}{\underbrace{A^{(i_{1,2})}_2w^{(1,2)}_1w^{(1,2)}_2\cdots w^{(1,2)}_{n_2}}}\underset{w^{(*,3)}_2}{\underbrace{A^{(i_{2,2})}_2w^{(2,2)}_1w^{(2,2)}_2\cdots w^{(2,2)}_{n_2}}}\cdots\\
		&\cdots\\
		&=\underset{A_{k+1}}{\underbrace{A_kw^{(*,k)}_1w^{(*,k)}_2\cdots w^{(*,k)}_{n_k}}}\underset{w^{(*,k+1)}_1}{\underbrace{A^{(i_{1,k})}_kw^{(1,k)}_1w^{(1,k)}_2\cdots w^{(1,k)}_{n_k}}}\underset{w^{(*,k+1)}_2}{\underbrace{A^{(i_{2,k})}_kw^{(2,k)}_1w^{(2,k)}_2\cdots w^{(2,k)}_{n_k}}}\cdots
	\end{aligned}
	\]
	Write $X^*=\overline{\mathcal{O}}(x^*,\sigma)\subset \Sigma^{\N}$.
	We will show that all the points in $(X^*,\sigma)$ are multiply minimal in the next subsection.
	
	First, we show some basic properties of $(X^*,\sigma)$.
	
	\begin{lemma}\label{l:structure}
		Let $K\ge 1$.
		Then for any $0\le k<K$ and $w\in \W_{K}$, 
		\[w|_{[ip_{k},(i+1)p_k)}\in\W_{k},\,0\le i<\frac{p_{K}}{p_{k}}.\]
		In particular, for any $k\ge 0$, $x^*|_{[ip_{k},(i+1)p_k)}\in \W_{k}$ for any $i\in\N$.
	\end{lemma}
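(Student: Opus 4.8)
The plan is to reduce everything to a single ``one-step'' fact: \emph{every word of $\W_{k+1}$ is the concatenation, in order, of $n_{k+1}+1$ words of $\W_k$, each of length $p_k$.} Granting this, the general statement follows by iterating it $K-k$ times, the only point to watch being that the block boundaries at successive levels are compatible. To establish the one-step fact I would just unwind the construction: any $w\in\W_{k+1}$ has the form
$w=P^{(k+1)}_{\phi}\big(A^{(i)}_{k+1}w^{(k+1)}_1\cdots w^{(k+1)}_{n_{k+1}}\big)=A^{(i)}_{k+1}\,w^{(k+1)}_{\phi(1)}\cdots w^{(k+1)}_{\phi(n_{k+1})}$
for some $0\le i\le\hat n_k$ and $\phi\in S^2(n_{k+1})$, with $A^{(i)}_{k+1}\in\Wp_k$ and $w^{(k+1)}_l\in\Ws_k$ for every $l$. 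By property (ii), $\Wp_k\cup\Ws_k=\W_k\subset W_k$, so each of the $n_{k+1}+1$ consecutive blocks $A^{(i)}_{k+1},\,w^{(k+1)}_{\phi(1)},\dots,w^{(k+1)}_{\phi(n_{k+1})}$ lies in $\W_k$ and has the common length $p_k$ of words in $\W_k$. In particular $p_{k+1}=(n_{k+1}+1)p_k$, hence $p_k\mid p_{k+1}$, and $w|_{[ip_k,(i+1)p_k)}\in\W_k$ for every $0\le i<p_{k+1}/p_k$.

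Next I would run an induction on $K\ge k$, for each fixed $k\ge 0$. The case $K=k$ is trivial ($i=0$ and $w|_{[0,p_k)}=w\in\W_k$), and $K=k+1$ is exactly the one-step fact. Assuming the statement for $K$, take $w\in\W_{K+1}$; the one-step fact at level $K\to K+1$ gives $w=v_0\cdots v_{n_{K+1}}$ with $v_j:=w|_{[jp_K,(j+1)p_K)}\in\W_K$. The induction hypothesis applied to each $v_j$ yields $v_j|_{[mp_k,(m+1)p_k)}\in\W_k$ for all $0\le m<p_K/p_k$. Since $p_k\mid p_K$ and $p_K\mid p_{K+1}$, the $p_k$-blocks of $w$ are exactly, block by block, the $p_k$-blocks of the $v_j$ strung together; hence $w|_{[ip_k,(i+1)p_k)}\in\W_k$ for all $0\le i<p_{K+1}/p_k$. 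This closes the induction and proves the first assertion.

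For the ``in particular'' part I would use $x^*=\lim_k A_k$. Iterating property (i) gives $A_m|_{[0,p_k)}=A_{k+1}$ for every $m\ge k+1$, hence $x^*|_{[0,p_K)}=A_{K+1}\in\W_K$ for each $K\ge 0$. Given $k\ge 0$ and $i\in\N$, since $p_{k+1}=(n_{k+1}+1)p_k\ge 2p_k$ the lengths $p_K$ tend to infinity, so I may fix $K>k$ with $(i+1)p_k\le p_K$; then $x^*|_{[ip_k,(i+1)p_k)}=A_{K+1}|_{[ip_k,(i+1)p_k)}\in\W_k$ by the first part. The whole argument is essentially bookkeeping; the one place needing care is the alignment of blocks across levels, which is controlled by the divisibility chain $p_{-1}\mid p_0\mid p_1\mid\cdots$ furnished by $p_{k+1}=(n_{k+1}+1)p_k$, so I do not anticipate a genuine obstacle.
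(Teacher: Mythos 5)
Your proof is correct and follows essentially the same route as the paper: the one-step fact that any $w\in\W_{K+1}$ decomposes into $p_K$-blocks lying in $\W_K$ (immediate from the definition of $\W_{K+1}$, since the permutation $P^{(K+1)}_\phi$ only reorders the $\Ws_K$-blocks), followed by induction on $K$, and the ``in particular'' part via $x^*|_{[0,p_K)}=A_{K+1}\in\W_K$ for $K$ large enough. No gaps.
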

	
	\begin{proof}
		We prove it by induction.
		By the definition of $\W_{1}$, it holds for $K=1$.
		
		Suppose that it holds for some $K\ge 1$.
		By the definition of $\W_{K+1}$, for any word $w\in\W_{K+1}$,
		\[w|_{[ip_{K},(i+1)p_{K})}\in\W_{K}\] holds for any $0\le i< \frac{p_{K+1}}{p_{K}}$.
		So by the induction hypothesis, for any $0\le k<K$, we have 
		\[w|_{[ip_{k},(i+1)p_{k})}\in\W_{k}\] holds for any $0\le i< \frac{p_{K+1}}{p_{k}}$.
		So we prove the case of $K+1$, 
		which ends the proof.
		
		For any $k,i\in\N$, choose some $K$ such that $p_{K}>(i+1)p_{k}$ which implies $K>k$.
		By the definition of $x^*$, $x^*|_{[0,p_{K})}=A_{K+1}\in \W_{K}$.
		Then we have 
		\[x^*|_{[ip_{k},(i+1)p_k)}=A_{K+1}|_{[ip_{k},(i+1)p_k)}\in\W_{k}.\]
	\end{proof}
	
	\begin{lemma}\label{l:hitA_k}
		Let $x^*$ and $X^*$ be defined as above.
		For any $k\ge 0$,
		\begin{itemize}
			\item[(1)] $N_{\sigma}(x^*,\bigcup_{i=0}^{\hat n_{k-1}}[A^{(i)}_k])=p_k\N$;
			\item[(2)] for any $x\in X^*$, there is a unique integer $0\le r_k(x)<p_k$ such that 
			\[N_{\sigma}(x,\bigcup_{i=0}^{\hat n_{k-1}}[A^{(i)}_k])=r_k(x)+p_k\N.\]
		\end{itemize}
	\end{lemma}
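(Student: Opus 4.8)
The plan is to establish (1) first and then deduce (2) from it, since $x^*\in X^*$ and every point of $X^*$ is a limit of shifts of $x^*$; the combinatorial structure in Lemma \ref{l:structure} is exactly what makes the return set a single arithmetic progression modulo $p_k$. For (1), first I would note the containment $p_k\N\subseteq N_\sigma(x^*,\bigcup_{i}[A^{(i)}_k])$: by Lemma \ref{l:structure}, $x^*|_{[ip_k,(i+1)p_k)}\in\W_k$ for every $i\in\N$, and every word in $\W_k$ has one of $A^{(0)}_k,\dots,A^{(\hat n_{k-1})}_k$ as a prefix (this is built into the definition of $W_k$, since $\W_k\subseteq W_k$ and words of $W_k$ start with some $A^{(i)}_k$). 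Hence $\sigma^{ip_k}x^*\in\bigcup_i[A^{(i)}_k]$ for all $i\ge 0$. For the reverse containment I would argue that if $m\in N_\sigma(x^*,\bigcup_i[A^{(i)}_k])$ but $p_k\nmid m$, write $m=ip_k+r$ with $0<r<p_k$; then $x^*|_{[m,m+p_k)}$ would be a word in $\W_k$ occurring at a position not aligned to the $p_k$-grid, which should contradict the rigidity of the construction — the prefixes $A^{(i)}_k$ all begin with the symbol $2$ (since $A_0=2$ is a prefix of every $A_k$), and within any block $x^*|_{[jp_k,(j+1)p_k)}\in\W_k$ the symbol $2$ occurs only at positions that are multiples of $p_{k-1}$, and more precisely the occurrences of $A_k$-type prefixes are forced by the nested block structure to sit exactly at multiples of $p_k$. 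I would make this precise by an induction on $k$, using that each word of $\W_k$ is a concatenation of $n_k+1$ words from $\W_{k-1}$, so a prefix $A^{(i)}_k$ (which has length $p_{k-1}(n_{k-1}+\text{stuff})$... more carefully, length $p_{k-1}\cdot\frac{p_k}{p_{k-1}}$ divided appropriately — use $|A^{(i)}_k|=p_{k-1}$ wait, $A^{(i)}_k\in\W_{k-1}$... ) — the cleanest route is: $A^{(i)}_k\in\W_{k-1}$, so $|A^{(i)}_k|=p_{k-1}$, and an occurrence of such a word inside $x^*$ at position $m$ forces, by the induction hypothesis applied at level $k-1$, that $p_{k-1}\mid m$; then a further counting argument at level $k$ pins down $m$ modulo $p_k$.

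Actually the sharper claim to prove by induction is: the occurrences of $\bigcup_i[A^{(i)}_k]$ in $x^*$ are \emph{precisely} $p_k\N$, not a coarser progression. Given $p_{k-1}\mid m$ from level $k-1$, write $m=jp_{k-1}$; the block $x^*|_{[m,m+p_k)}$ read in units of $\W_{k-1}$-words is a length-$(n_k{+}1)$ window into the $\W_{k-1}$-decomposition of $x^*$, and it lies in $\W_k$ iff this window starts at a multiple of $n_k+1$ in that decomposition — because the $\W_{k-1}$-word appearing at the start of any $\W_k$-block is always one of the prefix words $A^{(i)}_k\in\Wp_{k-1}$, whereas the interior positions of a $\W_k$-block are forced (via \eqref{e:W_k} and $\Wp_{k-1}\cap\Ws_{k-1}=\emptyset$) to be \emph{suffix} words from $\Ws_{k-1}$, so a prefix word cannot appear at an interior position. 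This disjointness $\Wp_{k-1}\biguplus\Ws_{k-1}=\W_{k-1}$ is the crux and is exactly property (ii) of the construction.

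For part (2), fix $x\in X^*$ and pick a sequence $m_j$ with $\sigma^{m_j}x^*\to x$. For each $j$ there is $r_j\in\{0,\dots,p_k-1\}$ with $m_j\equiv r_j\pmod{p_k}$, and by (1) the return times of $\sigma^{m_j}x^*$ to $\bigcup_i[A^{(i)}_k]$ are $(-r_j \bmod p_k)+p_k\N$. Passing to a subsequence so that $r_j$ is constant $=:r$, and using that membership in the clopen set $\bigcup_i[A^{(i)}_k]$ is determined by finitely many coordinates (hence is continuous), I get $N_\sigma(x,\bigcup_i[A^{(i)}_k])\supseteq (p_k-r \bmod p_k)+p_k\N$, and conversely if $x$ hit the set at some time $t$ then for large $j$ so would $\sigma^{m_j}x^*$, forcing $t\equiv -r\pmod{p_k}$. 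Uniqueness of $r_k(x):=(p_k-r)\bmod p_k$ is immediate since two distinct residues give disjoint progressions. The main obstacle is the inductive step of (1) — proving that a $\W_k$-word cannot appear in $x^*$ at a position that is a multiple of $p_{k-1}$ but not of $p_k$; everything hinges on the prefix/suffix dichotomy $\Wp_{k-1}\biguplus\Ws_{k-1}=\W_{k-1}$ and on tracking that the permutations $P^{(k)}_\phi$ fix the prefix coordinate block, so the first $\W_{k-1}$-sub-word of any $\W_k$-word is a genuine prefix word and never a suffix word.
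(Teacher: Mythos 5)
Your proposal is correct and follows essentially the same route as the paper: the forward inclusion via Lemma \ref{l:structure}, the reverse inclusion by induction on $k$ using that each $A^{(i)}_{k+1}$ begins with some $A^{(j)}_k$ (so the induction hypothesis aligns the occurrence to the $p_k$-grid) together with the dichotomy $\Wp_k\biguplus\Ws_k=\W_k$ to rule out interior positions, and part (2) by passing to a subsequence with constant residue mod $p_k$ and using that $\bigcup_i[A^{(i)}_k]$ is clopen. The exploratory remarks about where the symbol $2$ sits are imprecise for $k\ge 2$, but the ``cleanest route'' you settle on is exactly the paper's argument.
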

	
	\begin{proof}
		To prove (1), for any $K\ge 1$, by Lemma \ref{l:structure} and $A_{K+1}\in\W_{K}$,
		we have 
		\[
		x^*|_{[ip_k,(i+1)p_k)}=A_{K+1}|_{[ip_k,(i+1)p_k)}\in \W_k
		\]
		for all $0\le i< \frac{p_{K}}{p_{k}}$ and $0\le k<K$.
		So by the definition of $\W_k$, we have $p_k\N\subset N_{\sigma}(x^*,\bigcup_{i=0}^{\hat n_{k-1}}[A^{(i)}_k])$ for all $k\ge 0$.
		
		For $k\ge 0$, we prove the converse by induction.
		Since
		\[x^*|_{[ip_{0},(i+1)p_{0})}\in \W_{0}\]
		for all $i\in \N$, by the definition of $\W_{0}$, we have 
		$N_{\sigma}(x^*,[A_{0}])\subset p_{0}\N$.
		Suppose that $N_{\sigma}(x^*,\bigcup_{i=0}^{\hat n_{k-1}}[A^{(i)}_k])\subset p_k\N$ holds for some $k\ge 0$.
		For any $np_{k+1}+r\in N_{\sigma}(x^*,\bigcup_{i=0}^{\hat n_{k}}[A^{(i)}_{k+1}])$ where $0\le r<p_{k+1}$, since 
		\[\sigma^{np_{k+1}+r}x^*\in \bigcup_{i=0}^{\hat n_{k}}[A^{(i)}_{k+1}]\subset \bigcup_{i=0}^{\hat n_{k-1}}[A^{(i)}_{k}],\]
		by the induction hypothesis, we have $np_{k+1}+r\in p_{k}\N$ and $p_{k+1}=p_k (n_{k+1}+1)$, which implies that $r\in p_{k}\N$.
		Let $l=\frac{r}{p_{k}}\in [0,n_{k+1}]\cap\N$.
		We have 
		\[x^*|_{[np_{k+1}+lp_{k},np_{k+1}+(l+1)p_{k})}\in\Wp_{k},\]
		and by $x^*|_{[np_{k+1},(n+1)p_{k+1})}\in\W_{k+1}$ and the definition of $\W_{k+1}$, we can conclude that $l=0$.
		So we have $N_{\sigma}(x^*,\bigcup_{i=0}^{\hat n_{k}}[A^{(i)}_{k+1}])\subset p_{k+1}\N$, which ends the proof of (1).
		
		To prove (2), fix any $x\in X^*$ and $k\ge 0$.
		There is a sequence $\{m_i\}$ such that $\sigma^{m_i}x^*\to x$ as $i\to \infty$.
		So there are an integer $0\le r< p_{k}$ and a subsequence $\{m'_i\}\subset \{m_i\}$ such that $m'_i+r\equiv 0\mod p_{k}$ for all $i$.
		We will show that $N_{\sigma}(x,\bigcup_{i=0}^{\hat n_{k-1}}[A^{(i)}_k])=r+p_k\N$, which also shows the uniqueness of $r$.
		For any $n\in \N$, by (1) and $m'_i+r\equiv 0\mod p_{k}$, we have $\sigma^{np_k+r+m'_i}x^*\in\bigcup_{i=0}^{\hat n_{k-1}}[A^{(i)}_k]$, which implies that $\sigma^{np_k+r}x\in \bigcup_{i=0}^{\hat n_{k-1}}[A^{(i)}_k]$.
		So $r+p_k\N\subset N_{\sigma}(x,\bigcup_{i=0}^{\hat n_{k-1}}[A^{(i)}_k])$.
		
		For the converse, for any $n\in N_{\sigma}(x,\bigcup_{i=0}^{\hat n_{k-1}}[A^{(i)}_k])$, that is, $\sigma^{n}x\in\bigcup_{i=0}^{\hat n_{k-1}}[A^{(i)}_k]$, there is $i$ large enough such that $\sigma^{n+m'_i}x^*\in \bigcup_{i=0}^{\hat n_{k-1}}[A^{(i)}_k]$.
		By (1), we have $n+m'_i\in p_k\N$.
		Since $m'_i+r\equiv 0\mod p_{k}$, we have $n\in r+p_k\N$.
		So it ends the proof of (2).
	\end{proof}
	
	By (2) of above lemma, we can define $r_k: X^*\to\{0,1,2,\dots,p_k-1\}$ for $k\ge 0$, which not only induces the factor map corresponding the maximal equicontinuous factor, but also characterizes positions which words in $\W_{k}$ appear in $x$.
	
	The following lemma shows that each the concatenation of words in $\Ws_{k}$ can appear in a given position of words in both $\Wp_{k+1}$ and $\Ws_{k+1}$.
	This property is a basic property of words in $X^*$, which is useful for the proof of multiple minimality and the openness of factor map.
	
	\begin{lemma}\label{l:free}
		Let $k\ge 0$.
		Then for any $A^{(i)}_{k+1}\in\Wp_{k}$, $1\le l_1<l_2\le n_{k+1}$ and  $w^{(k+1)}_1,w^{(k+1)}_2\in\Ws_{k}$, there exist $A^{(i')}_{k+2}\in\Wp_{k+1}$ and $w^{(k+2)}\in \Ws_{k+1}$ such that for each $w\in\{A^{(i')}_{k+2},w^{(k+2)}\}$, we have
		\[
		w|_{[0,p_{k})}=A^{(i)}_{k+1},\,w|_{[l_1p_{k},(l_1+1)p_{k})}=w^{(k+1)}_1\text{ and }w|_{[l_2p_{k},(l_2+1)p_{k})}=w^{(k+1)}_2.
		\]
	\end{lemma}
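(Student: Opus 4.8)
The plan is to realize both required words as the image under a single permutation operator $P^{(k+1)}_\phi$, with $\phi$ a product of two transpositions, applied to two base words that differ in only one $p_k$-block: the canonical ``prefix base'' $A^{(i)}_{k+1}w^{(*,k+1)}_1\cdots w^{(*,k+1)}_{n_{k+1}}$ for the element of $\Wp_{k+1}$, and a one-block perturbation of it for the element of $\Ws_{k+1}$. The guiding point is that $S^2(n_{k+1})$ is exactly the set of products of two transpositions, which is precisely the amount of freedom needed to move the two prescribed slots $l_1,l_2$ onto two slots where the target words $w^{(k+1)}_1,w^{(k+1)}_2$ already occur inside $w^{(*,k+1)}$.

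First I would locate those target slots. Put $a_j:=w^{(k+1)}_j\in\Ws_{k}$ for $j=1,2$. By part (i) of Lemma \ref{l:combin}, $\#R_{a_j,k+1}\ge\sqrt{n_{k+1}}-1>9$ (recall $n_{k+1}>N_*>100$), so I may choose $m_1\in R_{a_1,k+1}\cap\{1,\dots,n_{k+1}\}$ with $m_1\notin\{l_1,l_2\}$ and then $m_2\in R_{a_2,k+1}\cap\{1,\dots,n_{k+1}\}$ with $m_2\notin\{l_1,l_2,m_1\}$; this is possible whether or not $a_1=a_2$, since in either case at most three forbidden elements are removed from a set of more than nine. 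Take $\phi=(l_1\ m_1)(l_2\ m_2)\in S^2(n_{k+1})$; using $l_1\ne l_2$, $m_1\notin\{l_1,l_2\}$, $m_2\notin\{l_1,l_2,m_1\}$ one checks directly that $\phi(l_1)=m_1$ and $\phi(l_2)=m_2$. Since $m_j\in R_{a_j,k+1}\cap\{1,\dots,n_{k+1}\}$ forces $w^{(*,k+1)}_{m_j}=a_j$, the word $w_{\mathrm{pre}}:=P^{(k+1)}_\phi\big(A^{(i)}_{k+1}w^{(*,k+1)}_1\cdots w^{(*,k+1)}_{n_{k+1}}\big)$ belongs to $\Wp_{k+1}$ (here $A^{(i)}_{k+1}\in\Wp_k$ means $0\le i\le\hat n_k$), has $w_{\mathrm{pre}}|_{[0,p_k)}=A^{(i)}_{k+1}$ and $w_{\mathrm{pre}}|_{[l_jp_k,(l_j+1)p_k)}=w^{(*,k+1)}_{\phi(l_j)}=a_j=w^{(k+1)}_j$. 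This yields the required element of $\Wp_{k+1}$.

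For $\Ws_{k+1}$ I would keep the same $\phi$ and perturb the base word. Using $\#R_{*,k+1}\ge\sqrt{n_{k+1}}-1>9$, pick $l_0\in R_{*,k+1}\cap\{1,\dots,n_{k+1}\}$ with $l_0\notin\{l_1,l_2,m_1,m_2\}$, and since $\#\Ws_{k}\ge 2$, pick $b\in\Ws_{k}$ with $b\ne w^{(*,k+1)}_{l_0}$. Let $u'$ be obtained from $A^{(i)}_{k+1}w^{(*,k+1)}_1\cdots w^{(*,k+1)}_{n_{k+1}}$ by replacing only the block at slot $l_0$ with $b$. Because $l_0\in R_{*,k+1}$, the word $u'$ still has the shape generating $\W_{k+1}$ (its slot-$l$ block equals $a$ for $l\in R_{a,k+1}$ and lies in $\Ws_k$ for $l\in R_{*,k+1}$), so $w_{\mathrm{suf}}:=P^{(k+1)}_\phi(u')\in\W_{k+1}$; and since $l_0\notin\{m_1,m_2\}$, the blocks at slots $l_1,l_2$ and the prefix block are the same as for $w_{\mathrm{pre}}$. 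To see $w_{\mathrm{suf}}\notin\Wp_{k+1}$, observe that for every element of $\Wp_{k+1}$ the multiset of its $p_k$-blocks occupying slots $1,\dots,n_{k+1}$ equals the fixed multiset $\{w^{(*,k+1)}_m:1\le m\le n_{k+1}\}$, which is unchanged by any $P^{(k+1)}_\psi$, whereas for $w_{\mathrm{suf}}$ that multiset is obtained by replacing one occurrence of $w^{(*,k+1)}_{l_0}$ with $b\ne w^{(*,k+1)}_{l_0}$, hence is a different multiset. Therefore $w_{\mathrm{suf}}\in\Ws_{k+1}=\W_{k+1}\setminus\Wp_{k+1}$, and it is the required element of $\Ws_{k+1}$.

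I expect the only genuinely delicate point to be the last non-membership check: everything else is bookkeeping, but escaping $\Wp_{k+1}$ depends on the ``multiset of suffix blocks'' invariant of $\Wp_{k+1}$, and one must note that $\phi$ merely reorders blocks, so it cannot cancel the one-block perturbation. The other mild point worth flagging is that two transpositions are just enough to route two slots to two chosen targets — which is exactly why the construction was set up with $S^2(n_{k+1})$ rather than with single transpositions.
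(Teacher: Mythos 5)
Your proof is correct, and its skeleton matches the paper's: both arguments locate slots of the marker suffix $w^{(*,k+1)}_1\cdots w^{(*,k+1)}_{n_{k+1}}$ carrying the two target blocks (guaranteed nonempty, indeed large, by $\#R_{a,k+1}\ge\sqrt{n_{k+1}}-1$) and use a product of two disjoint transpositions in $S^2(n_{k+1})$ to route them to the prescribed positions $l_1,l_2$, applied to the marker-based word $A^{(i)}_{k+1}w^{(*,k+1)}_1\cdots w^{(*,k+1)}_{n_{k+1}}$ to produce the element of $\Wp_{k+1}$. Where you genuinely diverge is in producing the element of $\Ws_{k+1}$. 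The paper starts from a base word agreeing with $A^{(i)}_{k+1}$ on the prefix but differing from $A^{(i)}_{k+1}w^{(*,k+1)}_1\cdots w^{(*,k+1)}_{n_{k+1}}$ in at least $9$ slots of $R_{*,k+1}$; since any $P^{(k+1)}_{\phi}$ with $\phi\in S^2(n_{k+1})$ displaces at most $4$ slots, a Hamming-distance count ($9>4+4$) shows that no image of this word under $S^2(n_{k+1})$ can equal any element of $\Wp_{k+1}$, and the target blocks are then routed by a second permutation. You instead perturb a single free slot $l_0\in R_{*,k+1}$ and certify non-membership in $\Wp_{k+1}$ by the multiset of suffix blocks, which is invariant under every $P^{(k+1)}_{\psi}$ and hence constant on $\Wp_{k+1}$ but is altered by your one-block replacement. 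Both certificates are valid; yours is sharper (one perturbation suffices and the exclusion from $\Wp_{k+1}$ becomes a one-line invariant check), while the paper's count is cruder but does not require comparing block values. You are also more explicit than the paper on a bookkeeping point: you arrange $l_1,l_2,m_1,m_2$ to be pairwise distinct so that $\phi=(l_1\ m_1)(l_2\ m_2)$ is an honest product of disjoint transpositions with $\phi(l_j)=m_j$, whereas the paper merely asserts the existence of a suitable $\phi_1\in S^2(n_{k+1})$ sending the located slots to $l_1,l_2$.
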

	
	\begin{proof}
		Fix $k\ge 0$, $A^{(i)}_{k+1}\in\Wp_{k}$, $1\le l_1<l_2\le n_{k+1}$ and  $w^{(k+1)}_1,w^{(k+1)}_2\in\Ws_{k}$.
		By the definition of $A_{k+2}=A_{k+1}w^{(*,k+1)}_1\cdots w^{(*,k+1)}_{n_{k+1}}\in\W_{k+1}$, there is $1\le l'_1,l'_2\le n_{k+1}$ such that $w^{(*,k+1)}_{l'_1}=w^{(k+1)}_1$ and $w^{(*,k+1)}_{l'_2}=w^{(k+1)}_2$.
		Choose some $\phi_1\in S^2(n_{k+1})$ such that $\phi_1(l'_1)=l_1$ and $\phi_1(l'_2)=l_2$.
		Then for some $i'$, word $A^{(i')}_{k+2}=P^{(k+1)}_{\phi_1}(A^{(i)}_{k+1}w^{(*,k+1)}_1\cdots w^{(*,k+1)}_{n_{k+1}})\in\Wp_{k+1}$ satisfies that 
		\[
		A^{(i')}_{k+2}|_{[0,p_{k})}=A^{(i)}_{k+1},\,A^{(i')}_{k+2}|_{[l_1p_{k},(l_1+1)p_{k})}=w^{(k+1)}_1\text{ and }A^{(i')}_{k+2}|_{[l_2p_{k},(l_2+1)p_{k})}=w^{(k+1)}_2.
		\]
		
		Since $\#R_{*,k+2}>2^{\sqrt{n_{k+1}}-1}\ge 9$, we can choose some $w\in\Ws_{k+1}$ such that $w|_{[0,p_k)}=A^{(i)}_{k+1}$ and 
		\[\#\{1\le n\in R_{*,k+1}:\,w|_{[np_k,(n+1)p_k)}\neq w^{(*,k+1)}_n\}\ge 9.\]
		So for any $\phi\in S^2(n_{k+1})$, we have $P^{(k+1)}_{\phi}(w)\notin \Wp_{k+1}$.
		Since $w\in\Ws_{k+1}$, there are $1\le l''_1,l''_2\le n_{k+1}$ such that $w|_{[l''_1p_k,(l''_1+1)p_k)}=w^{(k+1)}_1$ and $w|_{[l''_2p_k,(l''_2+1)p_k)}=w^{(k+1)}_2$.
		Choose some $\phi_2\in S^2(n_{k+1})$ such that $\phi_2(l''_1)=l_1$ and $\phi_2(l''_2)=l_2$.
		Then let $w^{(k+2)}=P^{(k+1)}_{\phi_2}(w)$, and we have 
		\[
		w^{(k+2)}|_{[0,p_{k})}=A^{(i)}_{k+1},\,w^{(k+2)}|_{[l_1p_{k},(l_1+1)p_{k})}=w^{(k+1)}_1\text{ and }w^{(k+2)}|_{[l_2p_{k},(l_2+1)p_{k})}=w^{(k+1)}_2.
		\]
	\end{proof}
	
	By the induction, we have the following corollary.
	
	\begin{corollary}\label{c:free}
		Let $k'\ge k$.
		For any $A^{(i)}_{k+1}\in\Wp_{k}$, $w^{(k+1)}\in\Ws_{k}$, $l_0\in [0,p_{k'+1})\cap p_{k+1}\Z$ and $l_1\in [0,p_{k'+1})\cap p_{k}\Z\setminus p_{k+1}\Z$, there exist $A^{(i')}_{k'+2}\in\Wp_{k'+1}$ and $w^{(k'+2)}\in \Ws_{k'+1}$ such that for each $w\in\{A^{(i')}_{k'+2},w^{(k'+2)}\}$, we have
		\[
		w|_{[l_0,l_0+p_{k})}=A^{(i)}_{k+1}\text{ and } w|_{[l_1,l_1+p_{k})}=w^{(k+1)}.
		\]
	\end{corollary}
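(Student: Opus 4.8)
The plan is to establish Corollary \ref{c:free} by induction on $k'\ge k$, with Lemma \ref{l:free} serving both as the base case and as the engine of the inductive step. Throughout I will use that a word of $\W_{j+1}$ has length $p_{j+1}=p_j(n_{j+1}+1)$ and decomposes into consecutive length-$p_j$ blocks, block $0$ being a word of $\Wp_j$ and blocks $1,\dots,n_{j+1}$ being words of $\Ws_j$, together with the divisibilities $p_k\mid p_{k+1}\mid p_{k'+1}\mid p_{k'+2}$ and the bound $n_j>N_*>2$.

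For the base case $k'=k$, I would first unwind the index conditions: $[0,p_{k+1})\cap p_{k+1}\Z=\{0\}$ forces $l_0=0$, and $[0,p_{k+1})\cap p_k\Z\setminus p_{k+1}\Z=\{jp_k:1\le j\le n_{k+1}\}$, so $l_1=jp_k$ for some $1\le j\le n_{k+1}$. Then I would apply Lemma \ref{l:free} directly to $A^{(i)}_{k+1}$ with $j$ as one of the two prescribed block positions (choosing an arbitrary second position, possible since $n_{k+1}\ge 2$), placing $w^{(k+1)}$ in block $j$ and anything from $\Ws_k$ in the other block. The lemma's output $A^{(i')}_{k+2}\in\Wp_{k+1}$ and $w^{(k+2)}\in\Ws_{k+1}$ then has first block $A^{(i)}_{k+1}$ (which handles $l_0=0$) and $j$-th block $w^{(k+1)}$ (which handles $l_1=jp_k$).

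For the inductive step, assuming the result for $k'$, I would pass to the block structure of $\W_{k'+2}$. Writing $l_0=m_0p_{k'+1}+r_0$ and $l_1=m_1p_{k'+1}+r_1$ with $0\le m_0,m_1\le n_{k'+2}$ and $0\le r_0,r_1<p_{k'+1}$, the divisibilities give $r_0\in[0,p_{k'+1})\cap p_{k+1}\Z$ and $r_1\in[0,p_{k'+1})\cap p_k\Z\setminus p_{k+1}\Z$, so the induction hypothesis applied to $A^{(i)}_{k+1},w^{(k+1)},r_0,r_1$ produces words $B\in\Wp_{k'+1}$ and $C\in\Ws_{k'+1}$ each containing $A^{(i)}_{k+1}$ at offset $r_0$ and $w^{(k+1)}$ at offset $r_1$. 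I would then invoke Lemma \ref{l:free} once more — for the passage from level $k'+1$ to level $k'+2$ — with prefix word $B$, installing $C$ in block $m$ for every $m\in\{m_0,m_1\}$ with $m\ge 1$ (there are at most two such blocks, and one may pad the two prescribed positions with an arbitrary index since $n_{k'+2}\ge 2$). The resulting $A^{(i')}_{k'+3}\in\Wp_{k'+2}$ and $w^{(k'+3)}\in\Ws_{k'+2}$ then each have $0$-th block $B$ and the relevant blocks $C$, and a short check — reading the required subword out of $B$ when $m_0$ (resp. $m_1$) is $0$ and out of $C$ otherwise — verifies $w|_{[l_0,l_0+p_k)}=A^{(i)}_{k+1}$ and $w|_{[l_1,l_1+p_k)}=w^{(k+1)}$.

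The main obstacle, and really the only delicate point, is the case split on whether $l_0$ and/or $l_1$ lands in the prefix block (block $0$) or in a suffix block: a word returned by the induction hypothesis as a ``$\Wp$-word'' can only be reused as a prefix, never as a suffix block, whereas the ``$\Ws$-word'' $C$ may freely be placed into a suffix block. The argument stays clean precisely because the induction hypothesis yields a \emph{single} word $C$ carrying \emph{both} prescribed subwords simultaneously, so whenever a position falls in a suffix block we route it through $C$, and no conflict arises even if both $l_0,l_1$ (or neither) require suffix blocks, or if they share the same block. Everything else is routine index bookkeeping with the chain of divisibilities and the bounds $n_j\ge 2$.
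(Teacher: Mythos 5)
Your proof is correct and follows essentially the same route as the paper: induction on $k'$, decomposing $l_0,l_1$ into a block index times $p_{k'+1}$ plus a remainder, feeding the remainders to the induction hypothesis to get one $\Wp_{k'+1}$-word and one $\Ws_{k'+1}$-word each carrying both prescribed subwords, and then lifting with Lemma \ref{l:free}. Your explicit discussion of the prefix-block versus suffix-block case split matches the paper's uniform treatment via the condition $l'_i=0$ versus $l'_i\neq 0$.
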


	\begin{proof}
		Fix $k\ge 0$, $A^{(i)}_{k+1}\in\Wp_{k}$, $w^{(k+1)}\in\Ws_{k}$.
		We will prove by induction.
		
		When $k'=k$, it is directly deduced from Lemma \ref{l:free}.
		
		Suppose that the corollary holds for some $k'\ge k$.
		Fix $l_0\in [0,p_{k'+2})\cap p_{k+1}\Z$ and $l_1\in [0,p_{k'+2})\cap p_{k}\Z\setminus p_{k+1}\Z$.
		Then for each $i=0,1$, set 
		\[
		l_i=l'_ip_{k'+1}+l''_i
		\]
		for some $0\le l'_i\le n_{k'+2}$ and $0\le l''_i<p_{k'+1}$.
		So $l''_0\in [0,p_{k'+1})\cap p_{k+1}\Z$ and $l''_1\in [0,p_{k'+1})\cap p_{k}\Z\setminus p_{k+1}\Z$.
		By the assumption, there exist $A^{(i')}_{k'+2}\in\Wp_{k'+1}$ and $w^{(k'+2)}\in \Ws_{k'+1}$ such that for each $w\in\{A^{(i')}_{k'+2},w^{(k'+2)}\}$, we have
		\[
		w|_{[l''_0,l''_0+p_{k})}=A^{(i)}_{k+1}\text{ and }w|_{[l''_1,l''_1+p_{k})}=w^{(k+1)}.
		\]
		By Lemma \ref{l:free} for $k'+1$, there exist $A^{(i')}_{k'+3}\in\Wp_{k'+2}$ and $w^{(k'+3)}\in \Ws_{k'+2}$ such that for each $w\in\{A^{(i')}_{k'+3},w^{(k'+3)}\}$ and $i=0,1$,
		\[
		w|_{[l'_ip_{k'+1},(l'_i+1)p_{k'+1})}=
		\left\{
		\begin{aligned}
		&A^{(i')}_{k'+2},&&l'_i=0,\\
		&w^{(k'+2)},&&l'_i\neq 0.\\
		\end{aligned}\right.
		\]
		Thus for each $w\in\{A^{(i')}_{k'+3},w^{(k'+3)}\}$,
		\[
		w|_{[l_0,l_0+p_{k})}=A^{(i)}_{k+1}\text{ and }w|_{[l_1,l_1+p_{k})}=w^{(k+1)},
		\]
		which shows that the corollary holds for $k'+1$.
	\end{proof}
	
	\subsection{Multiple minimality}
	In this subsection, we will show that all points in $X^*$ are multiply minimal.
	Recall that for $d\ge 2$, $\sigma_d:=\sigma\times \sigma^2\times\cdots\times \sigma^d: (X^*)^d\to (X^*)^d$, and $x^{(d)}:=(x,x,\dots,x)\in (X^*)^d$ for $x\in X^*$.
	And then we will prove that for all $x\in X$ and any neighbourhood $[w]$ of $x$, $N_{\sigma_d}(x^{(d)},[w]^d)$ is syndetic.
	
	Recall that for all $k\ge 0$, the partition 
	\[\NNo{n_{k}}=R_{*,k}\cup\bigcup_{a\in \Ws_{k-1}}R_{a,k}\]
	satisfying Lemma \ref{l:combin} for $d=k+1$, $c=\dropc$, $N=N_*$ and $Q=1+\#\Ws_{k-1}$,
	and for any word $w\in\W_{k}$ and $a\in\Ws_{k-1}$, we have 
	\begin{equation}\label{e:Mul_Min}
		\{n\in\NNo{n_{k}}: w|_{[np_{k-1},(n+1)p_{k-1})}=a\}\in \mathcal{R}_{\dropc}(R_{a,k}).
	\end{equation}
	The two properties mentioned above can ensure that all points in $X^*$ are multiply minimal.
	
	\begin{proposition}\label{p:mul_min}
		Let $d\ge 2$. 
		For all $x\in X^*$, $(x,x,\dots,x)\in (X^*)^d$ is $\sigma_d$-minimal.
	\end{proposition}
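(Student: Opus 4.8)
The plan is to show that for every $x\in X^{*}$ the point $x^{(d)}$ is uniformly recurrent for $\sigma_{d}$; since the orbit closure of a point is minimal precisely when that point is uniformly recurrent, this is the proposition. Unwinding the definitions of $\sigma_{d}$ and of the product metric, this is the statement that for every $m\ge 1$, writing $v:=x|_{[0,m)}$, the set
\[
S:=\{n\ge 1:\ x|_{[jn,jn+m)}=v\text{ for all }1\le j\le d\}
\]
is syndetic. I would prove this by producing, for a suitably large $k$ (fixed below) and for \emph{every} $t_{0}$ that is a multiple of $p_{k}$, an element of $S$ lying in $[t_{0},t_{0}+2p_{k}]$; since every non‑negative integer lies within $p_{k}$ of such a $t_{0}$, this forces $S$ to have gaps at most $3p_{k}$.

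Fix $m$ and choose $k$ with $p_{k-1}\ge m$ and $k+1\ge d$. Aligning $t_{0}$ to a multiple of $p_{k}$ is what makes the argument work: all the block lengths $p_{0},\dots,p_{k}$ divide $jt_{0}$, so $\sigma^{jt_{0}}x$ has exactly the same $\W_{0},\dots,\W_{k}$–block decomposition as $x$ for every $1\le j\le d$, and in particular the offsets $r_{k-1}$ and $r_{k}$ of $\sigma^{jt_{0}}x$ do not depend on $j$. If one further demands $n\equiv 0\pmod{p_{k-1}}$, say $n=t_{0}+Kp_{k-1}$, then a short computation shows that the position $jn$ sits at a fixed offset $\delta$ (independent of $j$) inside the $\W_{k-1}$–block of $x$ containing it, and that the index of that $\W_{k-1}$–block, among the consecutive $\W_{k-1}$–blocks of $x$, equals $jK-e$ for a fixed $e\in\{0,1\}$ determined by $\delta$. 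Since $m\le p_{k-1}$, the word $v=x|_{[0,m)}$ either lies inside one $\W_{k-1}$–word $a$ at offset $\delta$, or straddles two consecutive $\W_{k-1}$–words $a,a'$. Hence $x|_{[jn,jn+m)}=v$ holds provided that, for each $j$, the $\W_{k-1}$–block of $x$ at index $jK-e$ equals $a$ and the next one equals $a'$. Reading these indices modulo $n_{k}+1$ as positions inside a single $\W_{k}$–block (whichever one they land in, together with its right neighbour — which is where the ``$+\,n_{k}+1$'' will enter), this is exactly the type of requirement answered by Lemma~\ref{l:combin}.

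To close the loop, recall from the construction (equation \eqref{e:Mul_Min}) that for every $\W_{k}$–word $w$ and every $b\in\Ws_{k-1}$ the set of sub‑block positions of $w$ carrying $b$ lies in $\mathcal{R}_{\dropc}(R_{b,k})$: it contains $P_{\phi}(R_{b,k})$ for a $\phi\in S^{2}(n_{k})$, and such a $\phi$ moves at most $4<\dropc$ indices, so the defect is $<\dropc$. Now apply Lemma~\ref{l:combin} to the level–$k$ partition $\NNo{n_{k}}=R_{*,k}\cup\bigcup_{b}R_{b,k}$, which by construction satisfies it with $d_{\mathrm{lem}}=k+1\ge d$ and $c=\dropc$: feeding a suitable fixed residue $l\in\NNo{n_{k}}$ (absorbing $e$ and the $\W_{k}$–block offset of $x$), $q\leftrightarrow a$, $q'\leftrightarrow a'$, and as the sets $R^{(2r-1)}_{\bullet},R^{(2r)}_{\bullet}$ the actual defect sets of the two $\W_{k}$–blocks of $x$ relevant to coordinate $r$, part (ii) returns a positive integer $K$ with $jK+l\in R^{(2j-1)}_{a}\cup(R^{(2j)}_{a}+n_{k}+1)$ and $jK+l+1\in R^{(2j-1)}_{a'}\cup(R^{(2j)}_{a'}+n_{k}+1)$ for $1\le j\le d$ — precisely the assertion that the required $\W_{k-1}$–blocks are in place. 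As $K$ is bounded by roughly $2n_{k}/d$, we get $n=t_{0}+Kp_{k-1}\in[t_{0},t_{0}+2p_{k}]\cap S$, which is what was wanted.

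The main obstacle is the bookkeeping of the previous two paragraphs: converting ``$x|_{[jn,jn+m)}=v$ for all $j$'' into the literal hypotheses of Lemma~\ref{l:combin}. One must arrange simultaneously that (a) all relevant offsets are affine in the coordinate index $j$ — which dictates the alignment of $t_{0}$ to a multiple of $p_{k}$ and of $n$ to a multiple of $p_{k-1}$; (b) $v$ straddling a $\W_{k-1}$–boundary is allowed, which is why the lemma carries the two targets $l$ and $l+1$; (c) an occurrence spilling from one $\W_{k}$–block into the next is allowed, which the ``$R^{(2i)}_{q}+n_{k}+1$'' terms absorb; (d) every occurrence‑defect stays below $\dropc$, which is exactly why the construction only ever permutes sub‑blocks by elements of $S^{2}$; and (e) the case in which $a$ (or $a'$) lies in $\Wp_{k-1}$ rather than $\Ws_{k-1}$ — where the sub‑block position is pinned to $0$ — is treated separately, either directly or by enlarging $k$. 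Each point is routine in isolation; the only real work is making them hold at once.
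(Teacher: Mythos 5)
Your overall architecture coincides with the paper's: reduce minimality of $x^{(d)}$ to syndeticity of $N_{\sigma_d}(x^{(d)},[v]^d)$ along block-aligned arithmetic progressions; observe via (\ref{e:Mul_Min}) that the occurrence set of each suffix word inside any block one level up has defect $<\dropc$ (your count that an element of $S^2$ displaces at most $4<\dropc$ indices is exactly the paper's justification); and then invoke Lemma \ref{l:combin}, with the pair $l,l+1$ absorbing the case where $v$ straddles a block boundary and the $+\,n+1$ terms absorbing spill-over into the neighbouring block. All of that is the intended use of the machinery.

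The genuine gap is your item (e), which you set aside as routine but which is precisely where the proof needs an additional idea. If the block $a$ containing (the start of) $v$ lies in $\Wp_{k-1}$, its occurrence set inside a $\W_{k}$-block is contained in $\{0\}$: prefix words occur only in the initial sub-block position, and $\Wp_{k-1}\cap\Ws_{k-1}=\emptyset$. Since $\#R_{q,k}\ge\sqrt{n_k}-1>\dropc$ for every class of the level-$k$ partition, the set $\{0\}$ belongs to no $\mathcal{R}_{\dropc}(R_{q,k})$, and in any case the partition is indexed only by $\Ws_{k-1}$; so Lemma \ref{l:combin} cannot be applied with $q\leftrightarrow a$, and demanding that the sub-block at index $jK-e$ equal $a$ for all $1\le j\le d$ is hopeless. ``Enlarging $k$'' does not escape this: for $x=x^*$ one has $x^*|_{[0,p_{k-1})}=A_k\in\Wp_{k-1}$ at \emph{every} level, and the same obstruction recurs for $a'$ whenever $v$ straddles a $\W_k$-boundary, because the first sub-block of the next $\W_k$-block is always a prefix word. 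The paper's resolution is Lemma \ref{l:free} together with one extra level of indirection: rather than hunting for $a$ and $a'$ themselves, it manufactures \emph{suffix} words $w^{(k+2)}_1,w^{(k+2)}_2\in\Ws_{k+1}$ that carry the required configuration (including a prescribed prefix word placed at position $0$) at the correct offset, and then applies Lemma \ref{l:combin} one level up to the occurrence sets of these suffix words, which do lie in $\mathcal{R}_{\dropc}(R_{\,\cdot\,,k+2})$ by (\ref{e:W_k}). Your argument needs this replacement step — or an equivalent mechanism — made explicit; as written it fails already for $x=x^*$.
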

	
	\begin{proof}
		Fix $d\ge 2$, $x\in X^*$ and $k\ge 0$.
		Let $w=x|_{[0,p_{k})}$.
		Fix $K>d+k+2$ and $n\ge 1$. 
		We will show that there is $0\le m<2(n_{k+2}+1)$ such that $np_K+mp_{k+1}\in N_{\sigma_d}(x^{(d)},[w]^d)$, which implies that $N_{\sigma_d}(x^{(d)},[w]^d)$ is syndetic.
		
		First, we need some preparations.
		Choose $M$ large enough such that 
		\[\sigma^Mx^*|_{[0,(d+2)np_{K})}=x|_{[0,(d+2)np_{K})}.\]
		By Lemma \ref{l:hitA_k}, $\sigma^{r_k(x)}x\in \bigcup_{i=0}^{\hat n_{k-1}}[A^{(i)}_{k}]$.
		So $\sigma^{M+r_k(x)}x^*\in\bigcup_{i=0}^{\hat n_{k-1}}[A^{(i)}_{k}]$, by Lemma \ref{l:hitA_k} again, $M+r_{k}(x)\in p_{k}\N$, where $r_{k}(x)$ is as in Lemma \ref{l:hitA_k}.
		Similarly, we have $M+r_{k+1}(x)\in p_{k+1}\N$ and $M+r_{k+2}(x)\in p_{k+2}\N$, where $r_{k+1}(x),r_{k+2}(x)$ are as in Lemma \ref{l:hitA_k}.
		Let $y_{k+i}=(p_{k+i}-r_{k+i}(x))\mod p_{k+i}$, $i=0,1,2$.
		Then for $i=0,1,2$, we have
		\[M-y_{k+i}\in p_{k+i}\N.\]
		
		By Lemma \ref{l:structure}, suppose that 
		\[x^*|_{[M-y_{k+1},M-y_{k+1}+p_{k+1}+p_k)}=A^{(j_0)}_{k+1}w^{(0,k+1)}_1\cdots w^{(0,k+1)}_{n_{k+1}}A^{(j'_0)}_{k+1}.\]
		Then we have 
		\[\subword{A^{(j_0)}_{k+1}w^{(0,k+1)}_1\cdots w^{(0,k+1)}_{n_{k+1}}A^{(j'_0)}_{k+1}}_{[y_{k+1},y_{k+1}+p_k)}=x^*|_{[M,M+p_k)}=w.\]
		
		Set $q:=\frac{y_{k+1}-y_{k}}{p_k}\in\N$
		and 
		\begin{equation}\label{e:l}
			l:=\frac{y_{k+2}-y_{k+1}}{p_{k+1}}\in\N.
		\end{equation}
		So we have $0\le q\le n_{k+1}$ and $0\le l\le n_{k+2}$.
		
		Recall that $M-y_{k+2}\in p_{k+2}\N$. 
		For $1\le i\le d$, let 
		\begin{equation}\label{e:M_i}
			M_i:=M-y_{k+2}+inp_{K}\in p_{k+2}\N
		\end{equation}
		since $p_{K}\equiv 0\mod p_{k+2}$.
		By Lemma \ref{l:structure}, for $1\le i\le d$,
		\begin{equation}\label{e:w_i}
			w_{2i-1}:=x^*|_{[M_i,M_i+p_{k+2})},\,w_{2i}:=x^*|_{[M_i+p_{k+2},M_i+2p_{k+2})}\in\W_{k+2}.
		\end{equation}
		We aim to prove that $w$ appears in a suitable position of $w_{i}$.
		For the rest of the proof, we divide $q$ into 2 cases:
		
		{\bf Case 1:} $0\le q<n_{k+1}$.
		In this case, 
		\[
		w=
		\left\{
		\begin{aligned}
			&\subword{A^{(j_0)}_{k+1}w^{(0,k+1)}_1}_{[y_{k},y_{k}+p_k)},&&q=0,\\
			&\subword{w^{(0,k+1)}_{q}w^{(0,k+1)}_{q+1}}_{[y_{k},y_{k}+p_k)},&&1\le q<n_{k+1}.
		\end{aligned}
		\right.
		\]
		
		Next, we will find some $w^{(k+2)}_1\in\Ws_{k+1}$ such that \[\subword{w^{(k+2)}_1}_{[y_{k+1},y_{k+1}+p_k)}=w.\]
		By Lemma \ref{l:free}, there is $w^{(k+2)}_1\in\Ws_{k+1}$ such that 
		\[w^{(k+2)}_1|_{[qp_k,(q+2)p_k)}=
		\left\{
		\begin{aligned}
			&A^{(j_0)}_{k+1}w^{(0,k+1)}_1,&&q=0,\\
			&w^{(0,k+1)}_{q}w^{(0,k+1)}_{q+1},&&1\le q<n_{k+1}.
		\end{aligned}
		\right.
		\]
		Then we have 
		\begin{equation}\label{e1:w^k+2_1}
			w^{(k+2)}_1|_{[y_{k+1},y_{k+1}+p_k)}=w.
		\end{equation}
		
		By (\ref{e:W_k}) and (\ref{e:w_i}), for $1\le i\le 2d$, we have
		\[R^{(i)}:=\{n\in\NNo{n_{k+2}}: w_i|_{[np_{k+1},(n+1)p_{k+1})}=w^{(k+2)}_1\}\in\mathcal{R}_{\dropc}(R_{w^{(k+2)}_1,k+2}).\]
		For those $R^{(i)}$, $1\le i\le 2d$ and $0\le l=\frac{y_{k+2}-y_{k+1}}{p_{k+1}}\le n_{k+2}$, by Lemma \ref{l:combin}, there is a positive integer $m$ such that for $1\le i\le d$,
		\[im+l\in R^{(2i-1)}\cup (R^{(2i)}+n_{k+2}+1).\]
		Noting that $m+l<2(n_{k+2}+1)$, then $m<2(n_{k+2}+1)$.
		
		For each $1\le i\le d$, we show that 
		\[
		\sigma^{M_i+(im+l)p_{k+1}}x^*\in[w^{(k+2)}_1].
		\]
		
		If $im+l\in R^{(2i-1)}$, we have \[w_{2i-1}|_{[(im+l)p_{k+1},(im+l+1)p_{k+1})}=w^{(k+2)}_1,\]
		that is,
		\[\sigma^{M_i+(im+l)p_{k+1}}x^*\in[w^{(k+2)}_1].\]
		If $im+l\in R^{(2i)}+n_{k+2}+1$, then 
		\[w_{2i}|_{[(im+l-(n_{k+2}+1))p_{k+1},(im+l+1-(n_{k+2}+1))p_{k+1})}=w^{(k+2)}_1.\]
		Noting that $(n_{k+2}+1)p_{k+1}=p_{k+2}$, we have
		\[\sigma^{M_i+p_{k+2}+(im+l-(n_{k+2}+1))p_{k+1}}x^*\in[w^{(k+2)}_1],\]
		that is, $\sigma^{M_i+(im+l)p_{k+1}}x^*\in[w^{(k+2)}_1]$.
		
		So we have proved $\sigma^{M_i+(im+l)p_{k+1}}x^*\in[w^{(k+2)}_1]$ for all $1\le i\le d$.
		By (\ref{e:l}) and (\ref{e:M_i}), we have
		\[M_i+(im+l)p_{k+1}=M+i(np_{K}+mp_{k+1})-y_{k+1}.\]
		So \[\sigma^{M+i(np_{K}+mp_{k+1})-y_{k+1}}x^*\in[w^{(k+2)}_1],\]
		which implies that 
		\[\sigma^{M+i(np_{K}+mp_{k+1})}x^*\in\sigma^{y_{k+1}}[w^{(k+2)}_1]\subset [w],\]
		where the last inclusion is implied by (\ref{e1:w^k+2_1}).
		Since $i(np_{K}+mp_{k+1})+p_k<(d+2)np_{K}$, we have
		\[x|_{[i(np_{K}+mp_{k+1}),i(np_{K}+mp_{k+1})+p_k)}=x^*|_{[M+i(np_{K}+mp_{k+1}),M+i(np_{K}+mp_{k+1})+p_{k})}=w,\]
		that is, $\sigma^{i(np_{K}+mp_{k+1})}x\in [w]$ for $1\le i\le d$.
		Therefore, we have proved that $np_{K}+mp_{k+1}\in N_{\sigma_d}(x^{(d)},[w]^d)$ for Case 1.
		
		{\bf Case 2:} $q=n_{k+1}$, that is, $y_{k+1}=y_k+n_{k+1}p_{k}$.
		In this case, 
		\[
		\subword{w^{(0,k+1)}_{n_{k+1}}A^{(j'_0)}_{k+1}}_{[y_{k},y_{k}+p_k)}=w.
		\]
		
		By Lemma \ref{l:free}, there are $w^{(k+2)}_1,w^{(k+2)}_2\in\Ws_{k+1}$ such that 
		\[w^{(k+2)}_1|_{[n_{k+1}p_k,(n_{k+1}+1)p_k)}=w^{(0,k+1)}_{n_{k+1}}\text{ and }w^{(k+2)}_2|_{[0,p_k)}=A^{(j'_0)}_{k+1}.\]
		So we have
		\begin{equation}\label{e2:w^k+2_12}
			\subword{w^{(k+2)}_1w^{(k+2)}_2
			}_{[y_{k+1},y_{k+1}+p_k)}=\subword{w^{(0,k+1)}_{n_{k+1}}A^{(j'_0)}_{k+1}}_{[y_{k},y_{k}+p_k)}=w.
		\end{equation}
		
		In this case, it is different from Case 1 that we should ensure words $w^{(k+2)}_1$ and  $w^{(k+2)}_2$ appear in $w_i$ consecutively, which is implied by the full conclusion of Lemma \ref{l:combin}, while Case 1 uses half of them.
		By (\ref{e:W_k}) and (\ref{e:w_i}), for $1\le i\le 2d$, we have
		\[R^{(i)}_{1}:=\{n\in\NNo{n_{k+2}}: w_i|_{[np_{k+1},(n+1)p_{k+1})}=w^{(k+2)}_1\}\in\mathcal{R}_{\dropc}(R_{w^{(k+2)}_1,k+2}),\]
		and
		\[R^{(i)}_{2}:=\{n\in\NNo{n_{k+2}}: w_i|_{[np_{k+1},(n+1)p_{k+1})}=w^{(k+2)}_2\}\in\mathcal{R}_{\dropc}(R_{w^{(k+2)}_2,k+2}).\]
		For those $R^{(i)}_{1},R^{(i)}_{2}$, $1\le i\le 2d$ and $0\le l=\frac{y_{k+2}-y_{k+1}}{p_{k+1}}\le n_{k+2}$, by Lemma \ref{l:combin}, there is a positive integer $m$ such that for $1\le i\le d$,
		\[im+l\in R^{(2i-1)}_{1}\cup(R^{(2i)}_{1}+n_{k+2}+1),\]
		and 
		\[im+l+1\in R^{(2i-1)}_{2}\cup(R^{(2i)}_{2}+n_{k+2}+1).\]
		Noting that $m+l<2(n_{k+2}+1)$, then $m<2(n_{k+2}+1)$.
		
		For each $1\le i\le d$, we show that 
		\begin{equation}\label{e2:in w1}
			\sigma^{M_i+(im+l)p_{k+1}}x^*\in[w^{(k+2)}_1]
		\end{equation}
		and
		\begin{equation}\label{e2:in w2}
			\sigma^{M_i+(im+l+1)p_{k+1}}x^*\in[w^{(k+2)}_2],
		\end{equation}
		which imply that 
		\begin{equation}\label{e2:in w1w2}
			\sigma^{M_i+(im+l)p_{k+1}}x^*\in[w^{(k+2)}_1w^{(k+2)}_2].
		\end{equation}
		
		We only prove (\ref{e2:in w2}) since the proof of (\ref{e2:in w1}) is similar.
		If $im+l+1\in R^{(2i-1)}_{2}$, then
		\[w_{2i-1}|_{[(im+l+1)p_{k+1},(im+l+2)p_{k+1})}=w^{(k+2)}_2,\]
		that is,
		\[\sigma^{M_i+(im+l+1)p_{k+1}}x^*\in[w^{(k+2)}_2].\]
		If $im+l+1\in R^{(2i)}+n_{k+2}+1$, then 
		\[w_{2i}|_{[(im+l+1-(n_{k+2}+1))p_{k+1},(im+l+2-(n_{k+2}+1))p_{k+1})}=w^{(k+2)}_2.\]
		Noting that $(n_{k+2}+1)p_{k+1}=p_{k+2}$, we have
		\[\sigma^{M_i+p_{k+2}+(im+l+1-(n_{k+2}+1))p_{k+1}}x^*\in[w^{(k+2)}_2],\]
		that is, $\sigma^{M_i+(im+l+1)p_{k+1}}x^*\in[w^{(k+2)}_2]$.
		
		Now we have proved (\ref{e2:in w1w2}).
		By (\ref{e:l}) and (\ref{e:M_i}), we have
		\[M_i+(im+l)p_{k+1}=M+i(np_{K}+mp_{k+1})-y_{k+1}.\]
		And by (\ref{e2:w^k+2_12}), we have $\sigma^{M+i(np_{K}+mp_{k+1})}x^*\in [w]$.
		Since $i(np_{K}+mp_{k+1})<(d+2)np_{K}$, \[x|_{[i(np_{K}+mp_{k+1}),i(np_{K}+mp_{k+1})+p_k)}=x^*|_{[M+i(np_{K}+mp_{k+1}),M+i(np_{K}+mp_{k+1})+p_k)}=w,\]
		that is, $\sigma^{i(np_{K}+mp_{k+1})}x\in[w]$ for all $1\le i\le d$. 
		Therefore, we have proved $np_{K}+mp_{k+1}\in N_{\sigma_d}(x^{(d)},[w]^d)$.
	\end{proof}
	
	\subsection{Open and proximal extensions}
	
	In this subsection, we will prove that $(X^*,\sigma)$ is an open proximal extension of its maximal equicontinuous factor.
	
	First, we characterize its maximal equicontinuous factor.
	For a positive integer $n$, we define $\Z_n=\{0,1,2,\dots,n-1\}$. 
	Recall that $p_k$ is the length of words in $\W_k$.
	Then the sequence $\{p_k\}_{k=0}^{\infty}$ is a periodic structure.
	
	\newcommand{\Ro}{R_{\underline{1}}}
	By Lemma \ref{l:hitA_k}, for $k\ge 0$, define $\pi_k:X^*\to \Z_{p_k}$ by
	\begin{equation}\label{e:pi_k}
		\pi_k(x)=(p_k-r_k(x))\mod p_k.
	\end{equation}
	And $\pi:X^*\to \Z_{\{p_k\}}$ by 
	\begin{equation}\label{e:pi}
		\pi(x)=(\pi_k(x))_{k=0}^{\infty}.
	\end{equation}
	Let $Y$ be the odometer with respect to the periodic structure $\{p_k\}_{k=0}^{\infty}$.

	We will show that $Y$ is a factor $X$ by the factor map $\pi$.
	
	\begin{lemma}\label{l:factor2}
		Let $\pi$ be defined as (\ref{e:pi}). 
		For any $x\in X^*$, there is a sequence $\{m_k\}_{k=0}^{\infty}$ such that
		\begin{itemize}
			\item[(i)]  $\sigma^{m_k}x^*\to x$ as $k\to \infty$;
			\item[(ii)] $\Ro^{m_k}(\underline{0})|_{[0,K+1)}=(\pi_0(x),\pi_1(x),\dots,\pi_{K}(x))$ for all $k\ge K\ge 0$.
		\end{itemize}
		In particular, $\pi(X^*)\subset Y$.
	\end{lemma}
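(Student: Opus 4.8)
The plan is to read off the coordinates $\pi_j(x)$ of $\pi(x)$ directly from a sufficiently long prefix of $x$, using Lemma \ref{l:hitA_k}, which locates the occurrences of the prefix words $A^{(0)}_j,\dots,A^{(\hat n_{j-1})}_j$: in $x^*$ they occur exactly at positions in $p_j\N$, and in an arbitrary $x\in X^*$ exactly at positions in $r_j(x)+p_j\N$. Since $x$ lies in the orbit closure of $x^*$, for each $k$ I would choose $m_k\in\N$ with
\[(\sigma^{m_k}x^*)|_{[0,p_{k+1})}=x|_{[0,p_{k+1})}.\]
Because $p_{k+1}\to\infty$, property (i) is immediate from this choice, so the work is entirely in (ii).

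To prove (ii) I would fix $K\ge 0$, take any $k\ge K$, and let $0\le j\le K$. Put $n_0:=(-m_k)\mod p_j\in[0,p_j)$, so that $p_j\mid m_k+n_0$ and hence $\sigma^{m_k+n_0}x^*\in\bigcup_{i=0}^{\hat n_{j-1}}[A^{(i)}_j]$ by Lemma \ref{l:hitA_k}(1). The key point is that the common prefix of $\sigma^{m_k}x^*$ and $x$ is long enough to witness this occurrence inside $x$: since $p_{j-1}<p_j$ and $p_{j+1}=(n_{j+1}+1)p_j\ge 2p_j$, one has
\[n_0+p_{j-1}\le (p_j-1)+p_{j-1}<p_{j+1}\le p_{k+1},\]
so $x|_{[n_0,n_0+p_{j-1})}=x^*|_{[m_k+n_0,m_k+n_0+p_{j-1})}$, which is one of $A^{(0)}_j,\dots,A^{(\hat n_{j-1})}_j$. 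Thus $n_0\in N_{\sigma}(x,\bigcup_{i=0}^{\hat n_{j-1}}[A^{(i)}_j])=r_j(x)+p_j\N$ by Lemma \ref{l:hitA_k}(2), and since $0\le n_0<p_j$ this forces $n_0=r_j(x)$. Hence $\pi_j(x)=(p_j-r_j(x))\mod p_j=m_k\mod p_j$, which is exactly the $j$-th coordinate of $\Ro^{m_k}(\underline{0})$; letting $j$ run over $\{0,1,\dots,K\}$ gives (ii).

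For the last assertion, (ii) says that for every $K$ the first $K+1$ coordinates of $\Ro^{m_k}(\underline{0})$ agree with those of $\pi(x)$ as soon as $k\ge K$, i.e. $\Ro^{m_k}(\underline{0})\to\pi(x)$ in $\Z_{\{p_k\}}$. Since each $\Ro^{m_k}(\underline{0})$ belongs to $\mathcal{O}(\underline{0},\Ro)$ and $Y=\overline{\mathcal{O}}(\underline{0},\Ro)$ is closed, $\pi(x)\in Y$; as $x\in X^*$ is arbitrary, $\pi(X^*)\subset Y$. The only delicate point in the whole argument is the displayed length estimate, which guarantees that the residue $r_j(x)$ is genuinely determined by the prefix on which $\sigma^{m_k}x^*$ and $x$ coincide; everything else is a routine translation between occurrences of the prefix words and coordinates of the odometer, and no monotonicity of $\{m_k\}$ is needed.
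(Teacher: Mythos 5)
Your proof is correct and follows essentially the same route as the paper: both arguments rest on Lemma \ref{l:hitA_k} to convert occurrences of the prefix words $A^{(i)}_j$ into congruences $m_k\equiv\pi_j(x)\pmod{p_j}$, the only (cosmetic) difference being that you pin down $m_k$ explicitly by prefix agreement on $[0,p_{k+1})$ and transfer the occurrence from $x^*$ into $x$, whereas the paper transfers the occurrence $\sigma^{r_j(x)}x\in\bigcup_i[A^{(i)}_j]$ into $\sigma^{m'_j}x^*$ for $j$ large and then extracts a subsequence. Your length estimate $n_0+p_{j-1}<p_{j+1}\le p_{k+1}$ is the right check and holds, so the argument is complete.
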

	
	\begin{proof}
		For any $x\in X^*$, there is a sequence $\{m'_j\}_{j=0}^{\infty}$ such that $\sigma^{m'_j}x^*\to x$ as $j\to \infty$.
		For $k\ge 0$, since $\sigma^{r_k(x)}x\in\bigcup_{i=0}^{\hat n_{k-1}}[A^{(i)}_k]$ by Lemma \ref{l:hitA_k},
		there is $J_k$ such that for any $j\ge J_k$, $\sigma^{m'_j+r_k(x)}x^*\in \bigcup_{i=0}^{\hat n_{k-1}}[A^{(i)}_k]$.
		Then by Lemma \ref{l:hitA_k}, $m'_j+r_k(x)\equiv 0\mod p_k$, that is, $m'_j\equiv \pi_k(x)\mod p_k$ for $j>J_k$.
		Without loss of generality, assume that $J_{k+1}>J_{k}$ for all $k\ge 0$.
		So for any $k\ge 0$ and $j\ge J_k$, 
		\[\left(\Ro^{m'_j}(\underline{0})\right)_k=\pi_k(x).\]
		Let $m_k=m'_{J_k}$. 
		Then (ii) holds.
		Since $\{m_k\}$ is a subsequence of $\{m'_j\}$, (i) holds.
		
		By (ii), it implies that $\pi(X^*)\subset Y$.
	\end{proof}
	
	\begin{lemma}\label{l:factor1}
		Let $\pi$ be defined as (\ref{e:pi}). 
		Then $\pi(X^*)=Y$. 
	\end{lemma}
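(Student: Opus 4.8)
The plan is to combine Lemma \ref{l:factor2}, which already gives $\pi(X^*)\subseteq Y$, with the observation that $\pi$ is a continuous map carrying the orbit of $x^*$ onto the orbit of $\underline{0}$. First I would compute $\pi$ along the orbit of $x^*$: by Lemma \ref{l:hitA_k}(1), $N_\sigma(x^*,\bigcup_{i=0}^{\hat n_{k-1}}[A^{(i)}_k])=p_k\N$, so for every $n\ge 0$ we have $N_\sigma(\sigma^n x^*,\bigcup_{i=0}^{\hat n_{k-1}}[A^{(i)}_k])=(p_k\N-n)\cap\N$, whose least element is $(-n)\bmod p_k$. Hence $r_k(\sigma^n x^*)=(-n)\bmod p_k$ and $\pi_k(\sigma^n x^*)=(p_k-r_k(\sigma^n x^*))\bmod p_k=n\bmod p_k$, that is, $\pi(\sigma^n x^*)=\Ro^{\,n}(\underline{0})$. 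In particular $\mathcal{O}(\underline{0},\Ro)\subseteq \pi(X^*)$.

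Next I would verify that $\pi$ is continuous. Fix $k$. By Lemma \ref{l:hitA_k}(2), for each $x\in X^*$ the integer $r_k(x)$ is the unique $r\in\{0,\dots,p_k-1\}$ such that $x|_{[r,r+p_{k-1})}$ equals one of the finitely many words $A^{(0)}_k,\dots,A^{(\hat n_{k-1})}_k$; since $r+p_{k-1}<p_k+p_{k-1}$, the value $r_k(x)$, and therefore $\pi_k(x)$, depends only on $x|_{[0,p_k+p_{k-1})}$. Thus each $\pi_k$ is locally constant, so $\pi=(\pi_k)_{k\ge 0}$ is continuous.

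The conclusion then follows by a soft argument. Since $X^*=\overline{\mathcal{O}}(x^*,\sigma)$ is compact and $\pi$ is continuous, $\pi(X^*)$ is compact, hence closed in $\Z_{\{p_k\}}$. By the previous paragraph it contains $\mathcal{O}(\underline{0},\Ro)$, hence it contains its closure $\overline{\mathcal{O}}(\underline{0},\Ro)=Y$. Together with $\pi(X^*)\subseteq Y$ from Lemma \ref{l:factor2}, this gives $\pi(X^*)=Y$.

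I do not expect a genuine obstacle in this lemma: it is the standard fact that a continuous equivariant map sends the orbit closure of a transitive point onto the orbit closure of its image. The only points that require care, namely that $\pi$ actually lands in $Y$ and that $r_k$ (hence $\pi_k$) is well defined and locally constant, are exactly what Lemma \ref{l:factor2} and Lemma \ref{l:hitA_k} were set up to provide, so there is nothing left to do but assemble them.
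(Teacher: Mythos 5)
Your proof is correct, but it reaches the conclusion by a genuinely softer route than the paper. The paper fixes an arbitrary $y\in Y$, chooses times $m_j$ with $R_{\underline{1}}^{m_j}(\underline{0})\to y$ in a coordinate-stable way, passes to a subsequence along which $\sigma^{m_j}x^*$ converges to some $x$, and then verifies $\pi(x)=y$ directly from Lemma \ref{l:hitA_k} --- in effect inlining the continuity of $\pi$ into a pointwise check, so that surjectivity is obtained before continuity is ever stated (the paper only proves continuity afterwards, in Lemma \ref{l:factor}). You instead compute $\pi(\sigma^n x^*)=R_{\underline{1}}^{\,n}(\underline{0})$, prove that each $\pi_k$ is locally constant (hence $\pi$ is continuous), and conclude by compactness that $\pi(X^*)$ is closed and therefore contains $\overline{\mathcal{O}}(\underline{0},R_{\underline{1}})=Y$. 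Both arguments are sound: yours is more modular and makes explicit the standard mechanism that a continuous equivariant map carries the orbit closure of a transitive point onto the orbit closure of its image, at the cost of front-loading the continuity of $\pi$, which the paper defers to the next lemma; there is no circularity in doing so, since the paper's continuity argument does not rely on surjectivity. Your two auxiliary claims --- that $r_k(\sigma^n x^*)=(-n)\bmod p_k$ and that $r_k(x)$ depends only on $x|_{[0,p_k+p_{k-1})}$ --- are both correctly justified by the uniqueness clause of Lemma \ref{l:hitA_k}(2).
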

	
	\begin{proof}
		By Lemma \ref{l:factor2}, we prove that $\pi(X^*)\subset Y$.
		To prove $Y\subset\pi(X^*)$, fix $y=(y_k)_{k=0}^{\infty}\in Y$.
		Choose a sequence $\{m_j\}$ such that $\Ro^{m_j}(\underline{0})\to y$ as $j\to\infty$, and
		\begin{equation}\label{e:mj}
			\left(\Ro^{m_j}(\underline{0})\right)_k=y_k\text{ for any }j>k\ge 0.
		\end{equation}
		By choosing a subsequence, we can assume that $\sigma^{m_j}x^*\to x$ as $j\to\infty$.
		Fix $k\ge 0$.
		By (\ref{e:mj}), $m_j+p_k-y_k\in p_k\N$ for $j>k$.
		So 
		\[\sigma^{p_k-y_k+m_j}x^*\in \bigcup_{i=0}^{\hat n_{k-1}}[A^{(i)}_k]\text{ for }j>k.\]
		Then we have $\sigma^{p_k-y_k}x\in \bigcup_{i=0}^{\hat n_{k-1}}[A^{(i)}_k]$.
		By Lemma \ref{l:hitA_k}, $p_k-y_k\in r_k(x)+p_k\N$, that is, $\pi_k(x)=y_k$.
		Therefore, $\pi(x)=y$, which implies that $Y\subset\pi(X^*)$.
	\end{proof}
	
	Now, we restrict $\pi$ on $Y$. Indeed, $\pi: X^*\to Y$ is a factor map.
	
	\begin{lemma}\label{l:factor}
		Let $\pi$ be defined as (\ref{e:pi}).
		Then $\pi:X^*\to Y$ is a factor map.
	\end{lemma}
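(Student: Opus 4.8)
The plan is to verify the three defining conditions of a factor map: $\pi$ is well-defined and maps $X^*$ onto $Y$, $\pi$ is continuous, and $\pi$ intertwines $\sigma$ with the odometer rotation $R_{\underline 1}$. The first condition is already established: Lemma \ref{l:factor1} gives $\pi(X^*)=Y$, and $\pi$ is well-defined because each $r_k(x)$ is the \emph{unique} integer in $\{0,\dots,p_k-1\}$ furnished by part (2) of Lemma \ref{l:hitA_k}. So the work is to check continuity and equivariance.

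For continuity, I would argue coordinatewise: since $Y$ carries the product topology, it suffices to show each $\pi_k:X^*\to\Z_{p_k}$ is continuous, i.e. locally constant. Fix $x\in X^*$ and $k\ge 0$. By Lemma \ref{l:hitA_k}(2), $r:=r_k(x)$ satisfies $\sigma^{r}x\in\bigcup_{i=0}^{\hat n_{k-1}}[A_k^{(i)}]$, so $x$ lies in the cylinder $U$ determined by its first $r+p_{k-1}$ symbols (enough to read off which $A_k^{(i)}$ begins at position $r$). Any $x'\in U\cap X^*$ then also has $\sigma^{r}x'\in\bigcup_i[A_k^{(i)}]$, hence $r\in N_\sigma(x',\bigcup_i[A_k^{(i)}])=r_k(x')+p_k\N$ by Lemma \ref{l:hitA_k}(2) applied to $x'$; since $0\le r<p_k$ this forces $r_k(x')=r$, and therefore $\pi_k(x')=\pi_k(x)$ on the neighbourhood $U$. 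Thus $\pi_k$ is locally constant, so continuous, and $\pi$ is continuous.

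For equivariance I must show $\pi(\sigma x)=R_{\underline 1}\pi(x)$ for all $x\in X^*$, i.e. $\pi_k(\sigma x)=\pi_k(x)+1\ (\mathrm{mod}\ p_k)$ for every $k$. This follows from the translation behaviour of the hitting-time set: $N_\sigma(\sigma x,\bigcup_i[A_k^{(i)}])=\{m\ge 0: m+1\in N_\sigma(x,\bigcup_i[A_k^{(i)}])\}=\{m\ge 0: m+1\in r_k(x)+p_k\N\}=(r_k(x)-1 \bmod p_k)+p_k\N$, so by uniqueness $r_k(\sigma x)\equiv r_k(x)-1\pmod{p_k}$, whence $\pi_k(\sigma x)=(p_k-r_k(\sigma x))\equiv (p_k-r_k(x))+1=\pi_k(x)+1\pmod{p_k}$. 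Since this holds for all $k$, $\pi\circ\sigma=R_{\underline 1}\circ\pi$ on $X^*$, and combined with surjectivity onto $Y$ and continuity, $\pi:X^*\to Y$ is a factor map.

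I do not expect a genuine obstacle here; everything is bookkeeping on top of Lemmas \ref{l:hitA_k}, \ref{l:factor1} and \ref{l:factor2}. The only point requiring a little care is ensuring the hitting-time sets are taken over $\sigma$ acting on $X^*$ (not on $x^*$ alone) so that Lemma \ref{l:hitA_k}(2) applies to every point; once the uniqueness clause of that lemma is in hand, continuity and equivariance both drop out of the identity $N_\sigma(\sigma x,\cdot)=(N_\sigma(x,\cdot)-1)\cap\N$.
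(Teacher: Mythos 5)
Your proposal is correct and follows essentially the same route as the paper: surjectivity from Lemma \ref{l:factor1}, continuity by showing each $\pi_k$ has open fibers (the paper writes $\pi_k^{-1}(\{r\})$ directly as $X^*$ intersected with a shift-preimage of $\bigcup_i[A^{(i)}_k]$, while you verify local constancy on a cylinder neighbourhood — the same fact), and equivariance from the identity $N_\sigma(\sigma x,\cdot)=\left(N_\sigma(x,\cdot)-1\right)\cap\N$ together with the uniqueness of $r_k$. No gaps.
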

	
	\begin{proof}
		By Lemma \ref{l:factor1}, $\pi$ is a surjection.
		By Lemma \ref{l:hitA_k}, for any $k\ge 0$ and $r\in \Z_{p_k}$, 
		\[\pi_k^{-1}(\{r\})=X^*\cap\sigma^{-(p_k-r)}\left(\bigcup_{i=0}^{\hat n_{k-1}}[A^{(i)}_k]\right),\]
		which is an open subset of $X^*$.
		So $\pi_k$, $k\ge0$ are continuous, which implies that $\pi$ is continuous.
		
		Fix $k\ge 0$.
		To prove $\pi_k(\sigma x)\equiv(\pi_k(x)+1)\mod p_k$, by (\ref{e:pi_k}), it is sufficient to show that $r_k(x)\equiv (r_k(\sigma x)+1)\mod p_k$.
		Note that
		\[
		N_{\sigma}(\sigma x,\bigcup_{i=0}^{\hat n_{k-1}}[A^{(i)}_k])=
		\left(N_{\sigma}(x,\bigcup_{i=0}^{\hat n_{k-1}}[A^{(i)}_k])-1\right)\cap\N,
		\]
		which ends the proof.
	\end{proof}
	
	First, we will show that $\pi$ is not a bijection, and moreover, the pre-image of each $y\in Y$ is an infinite set.
	By the proof of Lemma \ref{l:factor}, it is sufficient to show that $X^*\cap [A^{(i)}_k]\neq \emptyset$ for all $0\le i\le \hat n_{k-1}$ and $k\ge 1$, which is the following lemma.
	
	\begin{lemma}\label{l:x*hitA_k}
		Let $k\ge 0$. 
		Then for any $A^{(i)}_{k+1}\in \Wp_{k}$, there is $m$ such that $\sigma^{mp_{k+1}}x^*\in[A^{(i)}_{k+1}]$.
	\end{lemma}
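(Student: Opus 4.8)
The goal is to show that for each $k\ge 0$ and each prefix word $A^{(i)}_{k+1}\in\Wp_k$, the word $A^{(i)}_{k+1}$ actually occurs in $x^*$ at a position that is a multiple of $p_{k+1}$. The natural approach is to exploit the ``freeness'' results already established, namely Lemma \ref{l:free} and Corollary \ref{c:free}, which say that prefix words can be planted at prescribed $p_{k+1}$-aligned positions inside words of $\W_{k'+1}$ for arbitrarily large $k'$. Since $x^*$ is a concatenation of words from $\W_{k'}$ along $p_{k'}$-blocks (Lemma \ref{l:structure}), once $A^{(i)}_{k+1}$ is forced to appear inside such a building block, it appears in $x^*$ itself.

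The plan is as follows. First, I would fix $k\ge 0$ and $A^{(i)}_{k+1}\in\Wp_k$. Recall $x^*|_{[0,p_{k'+1})}=A_{k'+2}\in\W_{k'+1}$ for every $k'\ge k$, and that $A_{k'+2}=P^{(k'+1)}_{\mathrm{id}}(A_{k'+1}w^{(*,k'+1)}_1\cdots w^{(*,k'+1)}_{n_{k'+1}})$, so $A_{k'+2}|_{[0,p_{k'})}=A_{k'+1}$, etc. The key point: by Corollary \ref{c:free} (or directly by iterating Lemma \ref{l:free}), taking a large $k'\ge k$, there exists some $A^{(i')}_{k'+2}\in\Wp_{k'+1}$ which contains $A^{(i)}_{k+1}$ at a $p_{k+1}$-aligned position $l_0\in[0,p_{k'+1})\cap p_{k+1}\Z$; in fact we can simply take $l_0=0$ and use that every $A^{(i')}_{k'+2}$ begins with $A^{(i')}_{k'+1}$, and repeatedly apply Lemma \ref{l:free} to arrange that the block at some position equals $A^{(i)}_{k+1}$. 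Since $A^{(i')}_{k'+2}\in\Wp_{k'+1}$, it equals $A^{(j)}_{k'+2}$ for some $0\le j\le\hat n_{k'+1}$, hence appears as a $p_{k'+1}$-block of $x^*$ (by Lemma \ref{l:hitA_k}(1), $N_\sigma(x^*,\bigcup_i[A^{(i)}_{k'+1}])=p_{k'+1}\N$, and one checks $A^{(j)}_{k'+2}$ genuinely occurs, e.g.\ as the initial block or by minimality of $X^*$). Then $A^{(i)}_{k+1}$ occurs in $x^*$ at a position $\equiv l_0\equiv 0\pmod{p_{k+1}}$.

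A cleaner route, which I would probably prefer, is induction on $k$ using Lemma \ref{l:x*hitA_k} itself for smaller indices together with Lemma \ref{l:free}: the base case $k=0$ asks that each $A^{(i)}_1\in\Wp_0$ (a permuted version of the marker $A_1$) occurs in $x^*$ at a multiple of $p_1$; this holds because $x^*$ reads as an infinite concatenation of blocks from $\W_1$ along $p_1$-intervals, and by minimality of $X^*$ every word in $\mathcal{L}(X^*)$ recurs, so it suffices to see $A^{(i)}_1$ lies in $\mathcal{L}(X^*)$ in a $p_1$-aligned way — which follows from Lemma \ref{l:free} applied at level $k=0$, producing some $A^{(i')}_2\in\Wp_1$ containing $A^{(i)}_1$ as its initial $p_1$-block, and $A^{(i')}_2$ does occur at a multiple of $p_2$ in $x^*$ (again: $x^*$ is a concatenation of $\W_2$-words, and by unique readability plus minimality the needed block recurs). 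The inductive step is identical one level up. The main obstacle I anticipate is the bookkeeping needed to certify that a word produced by Lemma \ref{l:free} or Corollary \ref{c:free} — which a priori only lives in $\W_{k'+1}$, i.e.\ in the \emph{language} — actually occurs \emph{in $x^*$} at a $p_{k+1}$-aligned position, rather than merely somewhere in $X^*$; this is handled by combining Lemma \ref{l:structure} (unique block decomposition of $x^*$), Lemma \ref{l:hitA_k}(1) (the alignment of prefix-word occurrences), and minimality of $(X^*,\sigma)$ (every element of $\mathcal{L}(X^*)$ appears in $x^*$), so the hard part is really just organizing these three ingredients so that the alignments match up modulo $p_{k+1}$.
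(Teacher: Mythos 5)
There is a genuine gap, and it sits exactly at the point you flag as ``the main obstacle'': certifying that the carrier word produced by Lemma \ref{l:free} or Corollary \ref{c:free} actually occurs in $x^*$. Your plan is to plant $A^{(i)}_{k+1}$ at a $p_{k+1}$-aligned position inside a \emph{prefix} word $A^{(i')}_{k'+2}\in\Wp_{k'+1}$ and then argue that this prefix word occurs in $x^*$ via Lemma \ref{l:hitA_k}(1) and minimality. Neither ingredient delivers this. Lemma \ref{l:hitA_k}(1) only says that at each multiple of $p_{k'+2}$ \emph{some} element of $\Wp_{k'+1}$ occurs, not the particular $A^{(i')}_{k'+2}$ you constructed; and minimality is circular here, since $\mathcal{L}(X^*)$ consists exactly of the subwords of $x^*$, so invoking ``every word in the language recurs'' presupposes the occurrence you are trying to establish. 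Membership in $\W_{k'+1}$ or $\Wp_{k'+1}$ is a purely combinatorial condition and does not by itself place a word in $\mathcal{L}(X^*)$. Worse, the claim ``the specific prefix word $A^{(i')}_{k'+2}$ occurs in $x^*$ at a $p_{k'+2}$-aligned position'' is precisely the instance of the present lemma at level $k'+1$, so both your direct route and your proposed induction reduce the statement to a strictly higher-level instance of itself, and the argument never closes.

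The missing idea is to use the other half of Lemma \ref{l:free}: it also produces a \emph{suffix} word $w\in\Ws_{k+1}$ with $w|_{[0,p_k)}=A^{(i)}_{k+1}$. Unlike prefix words, suffix words carry a built-in occurrence guarantee: in the marker word $A_{k+3}=A_{k+2}w^{(*,k+2)}_1\cdots w^{(*,k+2)}_{n_{k+2}}$ one has $w^{(*,k+2)}_l=a$ for every $l\in R_{a,k+2}$ with $a\in\Ws_{k+1}$, and each cell $R_{a,k+2}$ is nonempty by Lemma \ref{l:combin}(i); hence \emph{every} element of $\Ws_{k+1}$, in particular $w$, occurs in $A_{k+3}$ at some position $mp_{k+1}$. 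Since $A_{k+3}$ is a prefix of $x^*$, this yields $\sigma^{mp_{k+1}}x^*\in[w]\subset[A^{(i)}_{k+1}]$ directly, with no induction and no appeal to minimality; this is the paper's proof.
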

	
	\begin{proof}
		Fix $k\ge 0$ and $A^{(i)}_{k+1}\in \Wp_{k}$.
		By Lemma \ref{l:free}, there is $w\in \Ws_{k+1}$ such that $w|_{[0,p_k)}=A^{(i)}_{k+1}$.
		By the definition of $A_{k+2}$, there is $m$ such that 
		$A_{k+2}|_{[mp_{k+1},(m+1)p_{k+1})}=w.$
		By the definition of $x^*$, 
		$\sigma^{mp_{k+1}}x^*\in[w]\subset [A^{(i)}_{k+1}]$.
	\end{proof}
	
	By the above lemma, we can show that $\pi$ is a non-trivial factor map.
	
	\begin{lemma}
		For each $y\in Y$, $\pi^{-1}(y)$ is an infinite set.
	\end{lemma}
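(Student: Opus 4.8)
The plan is to show that for every $k\ge 1$ the fiber $\pi^{-1}(y)$ contains at least $\hat n_{k-1}+1$ distinct points. This suffices: besides the $\hat n_{k-1}+1$ words $A^{(i)}_kw^{(*,k)}_1\cdots w^{(*,k)}_{n_k}$, the set $\Wp_k$ also contains, for each $i$, the images of this word under a $3$-cycle $(l_1\,l_2\,l_3)\in S^2(n_k)$ with $w^{(*,k)}_{l_1},w^{(*,k)}_{l_2},w^{(*,k)}_{l_3}$ not all equal — such triples exist since $\#\Ws_{k-1}\ge 2$ and the sets $R_{a,k}$ are nonempty — so $\hat n_k+1=\#\Wp_k\ge 2(\hat n_{k-1}+1)$, whence $\hat n_{k-1}+1\ge 2^{k}\to\infty$. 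Now fix $y=(y_m)_{m\ge 0}\in Y$ and $k\ge 1$, and set $\rho_m=(p_m-y_m)\bmod p_m$; the odometer relation $y_{m'}\equiv y_m\pmod{p_m}$ for $m'\ge m$ gives $\rho_{m'}\equiv\rho_m\pmod{p_m}$. Consider the clopen sets
\[
P^{(i)}_k:=\{x\in X^*:\ x|_{[\rho_k,\rho_k+p_{k-1})}=A^{(i)}_k\}=X^*\cap\sigma^{-\rho_k}[A^{(i)}_k],\qquad 0\le i\le\hat n_{k-1}.
\]
They are pairwise disjoint, and by Lemma \ref{l:hitA_k} one has $\pi_k^{-1}(\{y_k\})=\{x\in X^*:r_k(x)=\rho_k\}=\biguplus_{i=0}^{\hat n_{k-1}}P^{(i)}_k$. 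Hence it is enough to prove that each $P^{(i)}_k$ meets $\pi^{-1}(y)$.

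Since a word of $\W_m$ begins with a word of $\Wp_{m-1}$, we have $\bigcup_j[A^{(j)}_{m+1}]\subseteq\bigcup_j[A^{(j)}_m]$, so $r_{m+1}(x)\equiv r_m(x)\pmod{p_m}$ and therefore $\pi_{m+1}^{-1}(\{y_{m+1}\})\subseteq\pi_m^{-1}(\{y_m\})$. Thus $\pi^{-1}(y)=\bigcap_{m\ge k}\pi_m^{-1}(\{y_m\})$ is a decreasing intersection of compacta, and $P^{(i)}_k\cap\pi^{-1}(y)=\bigcap_{m\ge k}\bigl(P^{(i)}_k\cap\pi_m^{-1}(\{y_m\})\bigr)$. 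By compactness it suffices to check, for each fixed $m\ge k$, that
\[
P^{(i)}_k\cap\pi_m^{-1}(\{y_m\})\ne\emptyset,
\]
i.e. that there is $x\in X^*$ with $x|_{[\rho_k,\rho_k+p_{k-1})}=A^{(i)}_k$ and $x|_{[\rho_m,\rho_m+p_{m-1})}\in\Wp_{m-1}$.

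To produce such an $x$, put $d=(\rho_k-\rho_m)\bmod p_m$; since $\rho_m\equiv\rho_k\pmod{p_k}$, $d$ is a multiple of $p_k$ with $0\le d<p_m$, so $l_0:=d+p_m\in[0,p_{m+1})\cap p_k\Z$. I apply Corollary \ref{c:free}, taking for its parameters "$k$", "$k'$", "$l_0$", "$l_1$" the values $k-1$, $m$, $d+p_m$, $p_{k-1}$, together with our $A^{(i)}_k\in\Wp_{k-1}$ and any chosen element of $\Ws_{k-1}$; this yields $A^{(i')}_{m+2}\in\Wp_{m+1}$ with $A^{(i')}_{m+2}|_{[d+p_m,\,d+p_m+p_{k-1})}=A^{(i)}_k$. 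By Lemma \ref{l:x*hitA_k} there is $\nu\ge 0$ with $\sigma^{\nu p_{m+2}}x^*\in[A^{(i')}_{m+2}]$, so $A^{(i)}_k$ occurs in $x^*$ starting at $q_0:=\nu p_{m+2}+d+p_m$, which satisfies $q_0\equiv d\pmod{p_m}$ and $q_0\ge p_m>\rho_k$. Let $x:=\sigma^{q_0-\rho_k}x^*\in X^*$. Then $x|_{[\rho_k,\rho_k+p_{k-1})}=x^*|_{[q_0,q_0+p_{k-1})}=A^{(i)}_k$, so $x\in P^{(i)}_k$; and since $q_0-\rho_k\equiv d-\rho_k\equiv-\rho_m\pmod{p_m}$, Lemma \ref{l:hitA_k}(1) gives $N_\sigma\bigl(x,\bigcup_j[A^{(j)}_m]\bigr)=\rho_m+p_m\N$, i.e. $r_m(x)=\rho_m$, hence $\pi_m(x)=y_m$ and $x|_{[\rho_m,\rho_m+p_{m-1})}\in\Wp_{m-1}$. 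Thus $x\in P^{(i)}_k\cap\pi_m^{-1}(\{y_m\})$, which completes the argument (in particular, taking $m=k$ shows each $P^{(i)}_k$ is nonempty).

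The delicate step is the last paragraph: one must realize the prefix word $A^{(i)}_k$ inside $x^*$ at a position whose residue modulo $p_m$ is exactly the one dictated by $y$ (namely $\equiv d$), and this is precisely what Corollary \ref{c:free} is built for — planting a prescribed prefix word at an arbitrary $p_k$-aligned position inside a level-$(m+1)$ prefix word, which by Lemma \ref{l:x*hitA_k} occurs in $x^*$ along $p_{m+2}\N$. The remaining work — bookkeeping of the offsets $\rho_m$ and checking the congruences through Lemma \ref{l:hitA_k} — is routine.
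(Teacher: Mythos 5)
Your proof is correct and follows essentially the same route as the paper's: plant the prescribed prefix word $A^{(i)}_k$ at the $y$-dictated offset inside a high-level prefix word via Corollary \ref{c:free}, realize it in $x^*$ via Lemma \ref{l:x*hitA_k}, and pass to the fiber by compactness, obtaining $\hat n_{k-1}+1$ points separated by the disjoint cylinders $\sigma^{-\rho_k}[A^{(i)}_k]$. The only genuine addition is your explicit verification that $\hat n_k\to\infty$ (via the $3$-cycle argument giving $\#\Wp_k\ge 2(\hat n_{k-1}+1)$), a point the paper's proof uses but leaves implicit.
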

	
	\begin{proof}
		Fix $y=(y_k)_{k\ge 0}\in Y$.
		By Lemma \ref{l:hitA_k}, for any $k\ge 0$, 
		\[
		\pi^{-1}(y)\subset \pi_k^{-1}(y_k)=X^*\cap \sigma^{-(p_k-y_k)}\left(\bigcup_{i=0}^{\hat n_{k-1}}[A^{(i)}_k]\right).
		\]
		Fix some $k_0\ge 0$ and $A^{(i_0)}_{k_0+1}$ for some $0\le i_0\le \hat n_{k_0}$.
		For any $k\ge k_0$, let
		\[
		l_k=p_{k+1}-y_{k+1}-(p_{k_0+1}-y_{k_0+1})\in [0,p_{k+1})\cap p_{k_0+1}\Z.
		\]
		Then by Corollary \ref{c:free}, there is some $A^{(i)}_{k+2}\in\Wp_{k+1}$ such that
		\[
		A^{(i)}_{k+2}|_{[l_k,l_k+p_{k_0})}=A^{(i_0)}_{k_0+1}.
		\]
		By Lemma \ref{l:x*hitA_k}, we have
		\[
		X^*\cap \left(\bigcup_{i=0}^{\hat n_{k}}[A^{(i)}_{k+1}]\right)\cap \sigma^{-l_k}[A^{(i_0)}_{k_0+1}]\neq \emptyset.
		\]
		So for any $k\ge k_0$,
		\[
		X^*\cap \sigma^{-(p_{k+1}-y_{k+1})}\left(\bigcup_{i=0}^{\hat n_{k}}[A^{(i)}_{k+1}]\right)\cap \sigma^{-(p_{k_0+1}-y_{k_0+1})}[A^{(i_0)}_{k_0+1}]\neq \emptyset.
		\]
		Thus there is some $\delta_{k_0}>0$ and $E_{k,k_0}\subset \pi_{k+1}^{-1}(y_{k+1})$ with $\#E_{k,k_0}=\hat n_{k_0}+1$ such that for any distinct $x,x'\in E_{k,k_0}$, $\rho(x,x')>\delta_{k_0}$.
		Therefore, there is $E_{k_0}\subset \pi^{-1}(y)$ with $\#E_{k_0}=\hat n_{k_0}+1$.
		By the arbitrariness of $k_0$, we have $\#\pi^{-1}(y)=\infty$.
	\end{proof}
	
	Next, we will prove the openness of $\pi$.
	Before the proof, we need the following lemma, which plays a key role in the proof of openness.
	
	\begin{lemma}\label{l:open1}
		Let an integer $K\ge 0$ and a word $w\in\mathcal{L}_{p_{K}}(X^*)$.
		Then there is $K'$ such that for any $k>K'$, the following holds:
		
		if $m\in\N$ and $0\le y_k<p_k$ satisfy 
		\[\sigma^{mp_k+y_k}x^*\in [w],\] then for any $0\le y_{k+1}<p_{k+1}$ with $y_{k+1}-y_k\equiv 0\mod p_{k+1}$, there is $m'\in\N$ such that 
		\[\sigma^{m'p_{k+1}+y_{k+1}}x^*\in [w].\]
	\end{lemma}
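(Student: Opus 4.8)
The plan is to read Lemma~\ref{l:open1} as the inductive step behind the openness of $\pi$: it says that an occurrence of $w$ in $x^*$ at a position congruent to $y_k$ modulo $p_k$ can always be refined to an occurrence at a position congruent to $y_{k+1}$ modulo $p_{k+1}$, where necessarily $y_{k+1}=qp_k+y_k$ for a unique $0\le q\le n_{k+1}$. So I would fix $m$ and $y_k$ with $\sigma^{mp_k+y_k}x^*\in[w]$, write $mp_k=m_1p_{k+1}+q_0p_k$ with $0\le q_0\le n_{k+1}$, and for each $0\le q\le n_{k+1}$ try to produce $m'$ with $\sigma^{m'p_{k+1}+qp_k+y_k}x^*\in[w]$. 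Taking $K'$ large enough that $p_{k-1}>p_K$ for all $k>K'$ (so $K'=K+1$ works) guarantees that the length-$p_K$ window of $w$ meets at most two consecutive $p_{k-1}$-blocks inside any $\W_k$-word.

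First I would locate the occurrence: by Lemma~\ref{l:structure}, $w$ sits at internal offset $y_k$ inside $vv'$, where $v:=x^*|_{[m_1p_{k+1}+q_0p_k,\,m_1p_{k+1}+(q_0+1)p_k)}\in\W_k$ and $v'\in\W_k$ is the next $p_k$-block of $x^*$. Because the window meets at most two consecutive $p_{k-1}$-blocks, the task reduces to manufacturing a $\W_{k+1}$-word --- or two consecutive $\W_{k+1}$-words, in the straddling case $y_k+p_K>p_k$ --- that occurs in $x^*$ at a $p_{k+1}$-grid position (this is where Lemma~\ref{l:x*hitA_k} enters) and whose $p_{k-1}$-blocks meeting the window, once the window is placed at internal offset $qp_k+y_k$, coincide with the corresponding $p_{k-1}$-blocks of $v$ (and of $v'$, whose only relevant block is its $p_{k-1}$-block of index $0$).

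The manufacture splits on $q$. If $q=0$, the window lies in the $p_k$-block of index $0$ of the target $\W_{k+1}$-word, i.e.\ in a level-$k$ prefix word $B\in\Wp_k$. Here one exploits the precise shape of $\Wp_k$: it contains words having an arbitrary $A^{(j)}_k\in\Wp_{k-1}$ as their $p_{k-1}$-block of index $0$ and an arbitrary $S^2(n_k)$-rearrangement of the canonical suffix blocks $w^{(*,k)}_1\cdots w^{(*,k)}_{n_k}$ of $A_{k+1}$ in the remaining $p_{k-1}$-positions, together with the fact that every word of $\Ws_{k-1}$ occurs among those canonical suffix blocks with multiplicity at least $\sqrt{n_k}-1>9$ (since $\#R_{a,k}\ge\sqrt{n_k}-1$). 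Choosing $B$ so that its at most two relevant $p_{k-1}$-blocks coincide with those of $v$ --- moving the relevant positions by a permutation in $S^2(n_k)$, and, if the block of index $0$ is involved, setting $B$'s $p_{k-1}$-block of index $0$ equal to that of $v$ --- gives $B|_{[y_k,y_k+p_K)}=w$, and Lemma~\ref{l:x*hitA_k} then provides $m'$ with $x^*|_{[m'p_{k+1},m'p_{k+1}+p_k)}=B$ (followed by a level-$k$ suffix word whose $p_{k-1}$-block of index $0$ one also matches to that of $v'$ when the window straddles), hence $\sigma^{m'p_{k+1}+y_k}x^*\in[w]$. If $q\ge1$, the window lies in a $p_k$-block of index $\ge1$, i.e.\ in a level-$k$ suffix word, possibly straddling two of them, or, when $q=n_{k+1}$, straddling a suffix word and the index-$0$ prefix word of the next $\W_{k+1}$-word. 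One picks $u\in\Ws_k$ --- and a second $u'\in\Ws_k$ when there is straddling --- whose relevant $p_{k-1}$-blocks match those of $v$ (resp.\ $v'$), using the same freedom in the $p_{k-1}$-blocks of a $\W_k$-word plus the ``at least $9$ disagreements on $R_{*,k}$-positions'' device from the proof of Lemma~\ref{l:free} to keep the word in $\Ws_k$. Then Corollary~\ref{c:free} (placing a single $u$ at the position $qp_k\in p_k\Z\setminus p_{k+1}\Z$, applied at a level high enough) or Lemma~\ref{l:free} (placing $u,u'$ at consecutive positions $q,q+1$, applied one level higher when $q=n_{k+1}$) manufactures a prefix word of some level carrying $u$ (and $u'$) at the required $p_k$-positions; by Lemma~\ref{l:x*hitA_k} it occurs in $x^*$, and $w$ appears inside it at internal offset $qp_k+y_k$, yielding the required $m'$.

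The hard part is not any individual step but the bookkeeping that glues them together: keeping careful track of how the length-$p_K$ window of $w$ straddles the (at most two) $p_{k-1}$-blocks it meets, and, above all, exploiting the precise structure of $\Wp_k$ versus $\Ws_k$ --- both allow an arbitrary level-$(k-1)$ prefix word as their $p_{k-1}$-block of index $0$, and the prefix/suffix dichotomy constrains only the arrangement of the remaining $p_{k-1}$-blocks --- so that a word of the correct type ($\Wp_k$ when $q=0$, $\Ws_k$ when $q\ge1$) that agrees with $v$ and $v'$ exactly on the window can always be found. The combinatorial inputs that make this work are $\#R_{a,k}\ge\sqrt{n_k}-1>9$ (every needed word appears among the canonical suffix blocks of $A_{k+1}$ with room to spare), the fact that $S^2(n_k)$ can carry any two distinct positions to any two distinct positions while remaining small enough for the entropy bound, and Lemmas~\ref{l:free} and~\ref{l:x*hitA_k}, which realize the manufactured words inside $x^*$.
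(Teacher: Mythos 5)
Your proposal follows the paper's proof essentially step for step: locate $w$ inside at most two consecutive $p_{k-1}$-blocks of the $\W_k$-decomposition of $x^*$ around position $mp_k$ (Lemma~\ref{l:structure}), manufacture via the $S^2$-permutations and Lemma~\ref{l:free} a prefix word of level $k+1$ (or $k+2$ in the wrap-around case $q=n_{k+1}$) carrying the same one or two blocks at internal offset $qp_k+y_k$, and realize that prefix word in $x^*$ on the $p_{k+1}$-grid via Lemma~\ref{l:x*hitA_k}; the paper merely organizes the cases by the position of the window inside the $p_k$-block first and by $l=q$ second, and your identification of the prefix/suffix typing constraints ($\Wp_k$ at index $0$, $\Ws_k$ at indices $\ge 1$) is exactly the crux. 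Two local slips, neither fatal since you deploy the correct device elsewhere: in the $q=0$ straddling sub-case, applying Lemma~\ref{l:x*hitA_k} directly to $B\in\Wp_k$ gives no control over the suffix word that follows $B$ in $x^*$, so there too one must first embed the pair consisting of $B$ and the matching suffix word into a single word of $\Wp_{k+1}$ by Lemma~\ref{l:free} and only then invoke Lemma~\ref{l:x*hitA_k} (this is the paper's Case 3.1 with $l=0$); and in the $q=n_{k+1}$ straddling case the second word $u'$ occupies the index-$0$ slot of the \emph{next} $\W_{k+1}$-word and must therefore be taken in $\Wp_k$ rather than $\Ws_k$, after which the two consecutive $\W_{k+1}$-words are glued inside a word of $\Wp_{k+2}$ (the paper's Case 3.2).
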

	
	\begin{proof}
		Fix $K\ge 0$ and a word $w\in\mathcal{L}_{p_{K}}(X^*)$.
		Let $K'>K+2$.
		Fix $k>K$, $m\in\N$ and $0\le y_k<p_k$ with $\sigma^{mp_k+y_k}x^*\in[w]$.
		By Lemma \ref{l:structure},
		\[\sigma^{mp_k}x^*|_{[0,p_k+p_{k-1})}=A^{(i)}_{k}w^{(k)}_1\cdots w^{(k)}_{n_k}A^{(i')}_{k}\]
		for some $A^{(i)}_{k},A^{(i')}_{k}\in \Wp_{k-1}$ and $w^{(k)}_l\in\Ws_{k-1}$, $l=1,2,\dots,n_k$.
		
		Let $0\le y_{k-1}<p_{k-1}$ with $y_k\equiv y_{k-1}\mod p_{k-1}$.
		Depending on which two consecutive words $w$ appear, we divide $y_k$ into 3 Cases:
		
		{\bf Case 1:} $0\le y_k<p_{k-1}$.
		In this case, we have $y_{k-1}=y_k$ and 
		\begin{equation}\label{e:c1_w1}
			w=\subword{A^{(i)}_{k}w^{(k)}_1}_{[y_{k-1},y_{k-1}+p_K)}.
		\end{equation}
		For any $0\le y_{k+1}<p_{k+1}$ with $y_{k+1}\equiv y_k\mod p_k$, by Lemma \ref{l:free}, there is a word
		\[
		u=
		\left\{
		\begin{aligned}
			&A^{(i'')}_{k+1}\in \Wp_{k}, &&y_{k+1}=y_{k},\\
			&w^{(k+1)}\in\Ws_{k}, &&y_{k+1}>y_{k},
		\end{aligned}
		\right.
		\]
		such that 
		\begin{equation}\label{e:c1_w2}
			u|_{[0,2p_{k-1})}=A^{(i)}_{k}w^{(k)}_1.
		\end{equation}
		Let $l=\frac{y_{k+1}-y_{k}}{p_k}$.
		By Lemma \ref{l:free} again, there is $A^{(i''')}_{k+2}\in \Wp_{k+1}$ such that 
		\[A^{(i''')}_{k+2}|_{[lp_{k+1},(l+1)p_{k+1})}=u.\]
		So combining this with (\ref{e:c1_w1}) and (\ref{e:c1_w2}), we have
		\[A^{(i''')}_{k+2}|_{[y_{k+1},y_{k+1}+p_{K})}=w,\]
		noting that $y_{k-1}=y_{k}$ in this case.
		By Lemma \ref{l:x*hitA_k}, there is $m''\in\N$ such that $\sigma^{m''p_{k+2}}x^*\in[A^{(i''')}_{k+2}]$, which implies that $\sigma^{m''p_{k+2}+y_{k+1}}x^*\in[w]$.
		
		{\bf Case 2:} $p_{k-1}\le y_k<p_{k}-p_{k-1}$. 
		In this case, $1\le q:=\frac{y_{k}-y_{k-1}}{p_{k-1}}<n_{k}$, and 
		\begin{equation}\label{e:c2_w1}
			w=\subword{w^{(k)}_{q}w^{(k)}_{q+1}}_{[y_{k-1},y_{k-1}+p_K)}.
		\end{equation}
		For any $0\le y_{k+1}<p_{k+1}$ with $y_{k+1}\equiv y_k\mod p_k$, by Lemma \ref{l:free}, there is a word
		\[
		u=
		\left\{
		\begin{aligned}
			&A^{(i'')}_{k+1}\in \Wp_{k}, &&y_{k+1}=y_{k},\\
			&w^{(k+1)}\in\Ws_{k}, &&y_{k+1}>y_{k},
		\end{aligned}
		\right.
		\]
		such that 
		\begin{equation}\label{e:c2_w2}
			u|_{[qp_{k-1},(q+2)p_{k-1})}=w^{(k)}_{q}w^{(k)}_{q+1}.
		\end{equation}
		Let $l=\frac{y_{k+1}-y_{k}}{p_k}$.
		By Lemma \ref{l:free} again, there is $A^{(i''')}_{k+2}\in \Wp_{k+1}$ such that 
		\[A^{(i''')}_{k+2}|_{[lp_{k+1},(l+1)p_{k+1})}=u.\]
		So combining this with (\ref{e:c2_w1}) and (\ref{e:c2_w2}), we have
		\[A^{(i''')}_{k+2}|_{[y_{k+1},y_{k+1}+p_{K})}=w,\]
		noting that $y_{k}=y_{k-1}+qp_{k-1}$ in this case.
		By Lemma \ref{l:x*hitA_k}, there is $m''\in\N$ such that $\sigma^{m''p_{k+2}}x^*\in[A^{(i''')}_{k+2}]$, which implies that $\sigma^{m''p_{k+2}+y_{k+1}}x^*\in[w]$.
		
		{\bf Case 3: }$p_{k}-p_{k-1}\le y_{k}<p_{k}$.
		In this case, $y_{k}=n_{k}p_{k-1}+y_{k-1}$ and 
		\begin{equation}\label{e:c3_w1}
			w=\subword{w^{(k)}_{n_{k}}A^{(i')}_{k}}_{[y_{k-1},y_{k-1}+p_K)}.
		\end{equation}
		For any $0\le y_{k+1}<p_{k+1}$ with $y_{k+1}\equiv y_k\mod p_k$, let $l:=\frac{y_{k+1}-y_{k}}{p_{k}}$ and we have $0\le l\le n_{k+1}$.
		
		We divide $l$ into 2 cases:
		
		{\bf Case 3.1: }$0\le l<n_{k+1}$.
		By Lemma \ref{l:free}, there are 
		\[
		u_1=
		\left\{
		\begin{aligned}
			&A^{(i'')}_{k+1}\in \Wp_{k}, &&l=0,\\
			&w^{(k+1)}_1\in\Ws_{k}, &&0<l<n_{k+1},
		\end{aligned}
		\right.
		\]
		and $u_2\in\Ws_{k}$ such that 
		\[u_1|_{[n_{k}p_{k-1},(n_{k}+1)p_{k-1})}=w^{(k)}_{n_k}\text{ and }u_2|_{[0,p_{k-1})}=A^{(i')}_{k},\]
		which implies that 
		\begin{equation}\label{e:c3_w2.1}
			\subword{u_1u_2}_{[n_{k}p_{k-1},(n_{k}+2)p_{k-1})}=w^{(k)}_{n_k}A^{(i')}_{k}.
		\end{equation}
		Noting that the choice of $u_1,u_2$ depends on $l$, 
		by Lemma \ref{l:free} again, there is $A^{(i''')}_{k+2}\in\Wp_{k+1}$ such that
		\[A^{(i''')}_{k+2}|_{[lp_{k},(l+2)p_{k})}=u_1u_2.\]
		So combining this with  (\ref{e:c3_w1}) and (\ref{e:c3_w2.1}), we have 
		\[A^{(i''')}_{k+2}|_{[y_{k+1},y_{k+1}+p_K)}=w\]
		noting that $y_{k}=n_kp_{k-1}+y_{k-1}$ in this case.
		By Lemma \ref{l:x*hitA_k}, there is $m''\in\N$ such that $\sigma^{m''p_{k+2}}x^*\in[A^{(i''')}_{k+2}]$, which implies that $\sigma^{m''p_{k+2}+y_{k+1}}x^*\in[w]$.
		
		{\bf Case 3.2: }$l=n_{k+1}$.
		By Lemma \ref{l:free}, there are 
		$w^{(k+1)}\in\Ws_{k}$ and $A^{(i'')}_{k+1}\in\Wp_{k}$ such that 
		\[w^{(k+1)}|_{[n_{k}p_{k-1},(n_{k}+1)p_{k-1})}=w^{(k)}_{n_k}\text{ and }A^{(i'')}_{k+1}|_{[0,p_{k-1})}=A^{(i')}_{k},\]
		which implies that 
		\begin{equation}\label{e:c3_w2.2}
			\subword{w^{(k+1)}A^{(i'')}_{k+1}}_{[n_{k}p_{k-1},(n_{k}+2)p_{k-1})}=w^{(k)}_{n_k}A^{(i')}_{k}.
		\end{equation}
		By Lemma \ref{l:free} again, there are $A^{(i''')}_{k+2}\in\Wp_{k+1}$ and $w^{(k+2)}\in\Ws_{k+1}$ such that 
		\[A^{(i''')}_{k+2}|_{[n_{k}p_{k-1},(n_{k}+1)p_{k-1})}=w^{(k+1)}\text{ and }w^{(k+2)}|_{[0,p_{k-1})}=A^{(i'')}_{k+1},\]
		which implies that 
		\begin{equation}\label{e:c3_w3.2}
			\subword{A^{(i''')}_{k+2}w^{(k+2)}}_{[lp_{k},(l+2)p_{k})}=w^{(k+1)}A^{(i'')}_{k+1}.
		\end{equation}
		Then by Lemma \ref{l:free}, there is $A^{(i^{(4)})}_{k+3}\in\Wp_{k+2}$ such that 
		\[A^{(i^{(4)})}_{k+3}|_{[0,2p_{k+1})}=A^{(i''')}_{k+2}w^{(k+2)}.\]
		So combining this with (\ref{e:c3_w1}), (\ref{e:c3_w2.2}) and $(\ref{e:c3_w3.2})$, we have
		\[A^{(i^{(4)})}_{k+3}|_{[y_{k+1},y_{k+1}+p_{K})}=w,\]
		noting that $y_k=n_kp_{k-1}+y_{k-1}$ in this case.
		By Lemma \ref{l:x*hitA_k}, there is $m''\in\N$ such that $\sigma^{m''p_{k+3}}x^*\in[A^{(i^{(4)})}_{k+3}]$, which implies that $\sigma^{m''p_{k+3}+y_{k+1}}x^*\in[w]$.
		
		Sum up with above cases, since $p_{k+3},p_{k+2}\equiv 0\mod p_{k+1}$, then there is $m'\in\N$ such that $\sigma^{m'p_{k+1}+y_{k+1}}x^*\in [w]$, which ends the proof.
	\end{proof}
	
	By the above lemma, we can prove that $\pi$ is open by showing that the image under $\pi$ of each open set contains some open neighborhood of each point in the image.
	
	\begin{proposition}\label{p:open}
		Let $\pi$ be defined as (\ref{e:pi}).
		Then $\pi: X^*\to Y$ is open.
	\end{proposition}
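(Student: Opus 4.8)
The plan is to reduce openness to a statement about cylinder sets and then invoke Lemma \ref{l:open1}, which already contains the combinatorial heart of the matter. Since the sets $[x|_{[0,p_K)}]\cap X^*$, ranging over $x\in X^*$ and $K\ge 0$, form a basis for the topology of $X^*$, it suffices to prove that for every $x\in X^*$ and every $K\ge 0$ the image $\pi([x|_{[0,p_K)}]\cap X^*)$ is a neighbourhood of $\pi(x)$ in $Y$. So I would fix such an $x$ and $K$, put $w=x|_{[0,p_K)}\in\mathcal{L}_{p_{K}}(X^*)$, let $K'$ be the constant given by Lemma \ref{l:open1} for this $w$, set $k_0=K'+1$, and claim that the basic open neighbourhood $V:=\{y\in Y:y_{k_0}=\pi_{k_0}(x)\}$ of $\pi(x)$ is contained in $\pi([w]\cap X^*)$.

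To establish the claim, fix $y=(y_k)_{k\ge 0}\in V$; note $y_{k_0}=\pi_{k_0}(x)$ and, since $y\in Y$, $y_{k+1}\equiv y_k\bmod p_k$ for every $k\ge 0$. First, using Lemma \ref{l:factor2} I would produce $m\in\N$ with $\sigma^m x^*\in[w]$ and $m\equiv\pi_{k_0}(x)=y_{k_0}\pmod{p_{k_0}}$: take $m=m_k$ from that lemma for some $k\ge k_0$ large enough that $\sigma^{m_k}x^*$ agrees with $x$ on the first $p_K$ symbols, so that $\sigma^{m_k}x^*\in[w]$, while $(R_{\underline 1}^{m_k}(\underline 0))_{k_0}=\pi_{k_0}(x)$ gives $m_k\equiv\pi_{k_0}(x)\bmod p_{k_0}$. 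Writing $m=ap_{k_0}+y_{k_0}$ with $a\in\N$, we have $\sigma^{ap_{k_0}+y_{k_0}}x^*\in[w]$. Now I would iterate the lifting step of Lemma \ref{l:open1}: for each $k\ge k_0$ we have $k>K'$ and $y_{k+1}\equiv y_k\bmod p_k$, so from $\sigma^{m^{(k)}p_k+y_k}x^*\in[w]$ the lemma yields some $m^{(k+1)}\in\N$ with $\sigma^{m^{(k+1)}p_{k+1}+y_{k+1}}x^*\in[w]$. This produces, for every $k\ge k_0$, a point $z_k:=\sigma^{m^{(k)}p_k+y_k}x^*\in[w]\cap X^*$.

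Finally I would pass to a convergent subsequence $z_{k_j}\to x'$. Since $[w]\cap X^*$ is closed, $x'\in[w]\cap X^*$. To see that $\pi(x')=y$, fix $k\ge 0$; for $j$ large enough that $k_j>k$, the exponent $p_k-y_k+m^{(k_j)}p_{k_j}+y_{k_j}$ is divisible by $p_k$ (because $p_k\mid p_{k_j}$ and $y_{k_j}\equiv y_k\bmod p_k$), so Lemma \ref{l:hitA_k}(1) gives $\sigma^{p_k-y_k}z_{k_j}\in\bigcup_{i=0}^{\hat n_{k-1}}[A^{(i)}_k]$; this set is clopen, so letting $j\to\infty$ yields $\sigma^{p_k-y_k}x'\in\bigcup_{i=0}^{\hat n_{k-1}}[A^{(i)}_k]$, and then Lemma \ref{l:hitA_k}(2) forces $r_k(x')\equiv p_k-y_k\bmod p_k$, i.e. $\pi_k(x')=y_k$. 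Hence $\pi(x')=y$, so $y\in\pi([w]\cap X^*)$; as $y\in V$ was arbitrary, $V\subset\pi([w]\cap X^*)$, which proves the claim and the proposition.

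The genuinely difficult part has already been handled in Lemma \ref{l:open1} (and the freedom it rests on, namely Lemma \ref{l:free} and Corollary \ref{c:free}); what remains here is bookkeeping: selecting the correct basic neighbourhood of $\pi(x)$ in the odometer, propagating the residues $y_k$ coherently through the induction, and the compactness argument that converts the family of occurrences $\{z_k\}$ into an honest preimage of $y$. The only point requiring a little care is that the level-by-level lifting of Lemma \ref{l:open1} advances from $k$ to $k+1$ under the constraint $y_{k+1}\equiv y_k\bmod p_k$, which is precisely the defining compatibility relation of $Y$, so that every $y\in V$ is indeed reachable.
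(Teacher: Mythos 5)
Your proposal is correct and follows essentially the same route as the paper: fix a basic cylinder $[w]$ with $w=x|_{[0,p_K)}$, use Lemma \ref{l:factor2} to find an occurrence of $w$ in $x^*$ at a position with the right residue modulo $p_{k_0}$, iterate the lifting step of Lemma \ref{l:open1} through all higher levels, and extract a limit point of the resulting occurrences that lands in $[w]$ and maps to the prescribed $y$. The only cosmetic difference is that you verify $\pi(x')=y$ coordinate-by-coordinate via Lemma \ref{l:hitA_k} rather than via $\pi(x^*)=\underline{0}$ and equivariance, which is equally valid.
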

	
	\begin{proof}
		Fix $K\ge0$ and $x\in X^*$. 
		Let $w=x|_{[0,p_K)}$ and $y=(y_{k})_{k=0}^{\infty}=\pi(x)\in Y$.
		By Lemma \ref{l:factor2}, there is a sequence $\{m_j\}$ such that $\sigma^{m_j}x^*\to x$ as $j\to\infty$ and $m_j\equiv y_J\mod p_{J}$ for all $j\ge J\ge 0$.
		
		Let $M>K'>K$ where $K'$ is as in Lemma \ref{l:open1}.
		We will prove that 
		\begin{equation}\label{e:open}
			\{y'=(y'_k)_{k=0}^{\infty}\in Y:y'_j=y_j,\,j=0,1,2,\dots,M\}\subset\pi([w]\cap X^*).
		\end{equation}
		To prove (\ref{e:open}), 
		fix a point $y'=(y'_k)_{k=0}^{\infty}$ that belongs to the set of the left-hand side of (\ref{e:open}).
		
		We first choose a sequence in $[w]\cap X^*$ by induction.
		Since $\sigma^{m_j}x^*\to x\in [w]$ as $j\to \infty$, there is $j_0>M$ such that $\sigma^{m_{j_0}}x^*\in [w]$. Noting that $m_{j_0}\equiv y_{M}\mod p_{M}$, there is $m'_{0}\in\N$ such that $m_{j_0}=m'_{0}p_{M}+y_{M}$ and $\sigma^{m'_{0}p_{M}+y_{M}}x^*\in [w]$.
		Since $y'\in Y$, $y'_{M+1}\equiv y'_M\mod p_{M}$.
		And by $y'_M=y_M$, $y'_{M+1}\equiv y_M\mod p_{M}$.
		Then by Lemma \ref{l:open1}, there is $m'_1\in\N$ such that $\sigma^{m'_1p_{M+1}+y'_{M+1}}x^*\in [w]$.
		Suppose that for some $k\ge 1$ we have defined $m'_k$ with $\sigma^{m'_{k}p_{M+k}+y'_{M+k}}x^*\in [w]$.
		Since $y'\in Y$, $y'_{M+k+1}\equiv y'_{M+k}\mod p_{M+k}$.
		Then by Lemma \ref{l:open1}, there is $m'_{k+1}\in\N$ such that $\sigma^{m'_{k+1}p_{M+k+1}+y'_{M+k+1}}x^*\in [w]$.
		
		We have define a sequence $\{\sigma^{m'_{k}p_{M+k}+y'_{M+k}}x^*\}_{k=0}^{\infty}$ in $[w]\cap X^*$.
		Taking a subsequence, assume that $\sigma^{m'_{k_i}p_{M+k_i}+y'_{M+k_i}}x^*\to x'$ as $i\to \infty$.
		So $x'\in[w]\cap X^*$.
		Noting that for all $i\ge I\ge 0$, 
		\[\left(\Ro^{m'_{k_i}p_{M+k_i}+y'_{M+k_i}}(\underline{0})\right)_{M+k_I}=y'_{M+k_I},\]
		since $\pi(x^*)=\underline{0}$ and Lemma \ref{l:factor}, we have $\pi(x')=y'$.
		
		Since the left-hand side of (\ref{e:open}) is an open neighbourhood of $y$ in $Y$, we prove that $\pi$ is open.
	\end{proof}
	
	Finally, we show the proximality of $\pi$.
	
	\begin{proposition}\label{p:proximal}
		Let $\pi$ be defined as (\ref{e:pi}).
		Then $\pi: X^*\to Y$ is a proximal extension.
	\end{proposition}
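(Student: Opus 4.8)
The plan is to prove the stronger statement that any two points $x,x'\in X^{*}$ with $\pi(x)=\pi(x')$ are proximal. Since the metric $\rho$ takes only the values $2^{-j}$, it is enough to show that for every $L\ge 1$ there are infinitely many $n\in\N$ with $(\sigma^{n}x)|_{[0,L)}=(\sigma^{n}x')|_{[0,L)}$. The whole point is that we need a long \emph{contiguous} agreement between $\sigma^{n}x$ and $\sigma^{n}x'$, not agreement at most coordinates, and this contiguous agreement will come for free from the fact that all prefix words of a given level share a common suffix sequence up to a permutation that moves very few blocks.

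The key structural observation is the following. Fix $k\ge 0$ and $A^{(i)}_{k+1},A^{(i')}_{k+1}\in\Wp_{k}$. By the definition of $\Wp_{k}$ we may write $A^{(i)}_{k+1}=P^{(k)}_{\phi}(A^{(a)}_{k}w^{(*,k)}_{1}\cdots w^{(*,k)}_{n_{k}})$ and $A^{(i')}_{k+1}=P^{(k)}_{\phi'}(A^{(a')}_{k}w^{(*,k)}_{1}\cdots w^{(*,k)}_{n_{k}})$ with $A^{(a)}_{k},A^{(a')}_{k}\in\Wp_{k-1}$ and $\phi,\phi'\in S^{2}(n_{k})$, where the suffix sequence $w^{(*,k)}_{1},\dots,w^{(*,k)}_{n_{k}}$ is the same for both. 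Then for $1\le j\le n_{k}$ one has $A^{(i)}_{k+1}|_{[jp_{k-1},(j+1)p_{k-1})}=w^{(*,k)}_{\phi(j)}$ and $A^{(i')}_{k+1}|_{[jp_{k-1},(j+1)p_{k-1})}=w^{(*,k)}_{\phi'(j)}$, and these coincide whenever $\phi(j)=\phi'(j)$. Since each element of $S^{2}(n_{k})$, being a product of at most two transpositions, moves at most four indices, $\phi$ and $\phi'$ disagree on at most eight indices, so $A^{(i)}_{k+1}$ and $A^{(i')}_{k+1}$ agree on the sub-block $[jp_{k-1},(j+1)p_{k-1})$ for all but at most eight values of $j\in\{1,\dots,n_{k}\}$. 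Deleting these at most eight bad indices cuts $\{1,\dots,n_{k}\}$ into at most nine blocks of consecutive good indices, so there is a run $\{j_{0},j_{0}+1,\dots,j_{0}+t-1\}\subset\{1,\dots,n_{k}\}$ with $t\ge (n_{k}-8)/9$ on which $A^{(i)}_{k+1}$ and $A^{(i')}_{k+1}$ agree entirely; equivalently they agree on the block $[j_{0}p_{k-1},(j_{0}+t)p_{k-1})\subseteq[0,p_{k})$.

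Now fix $L\ge 1$ and choose $k$ with $p_{k-1}\ge L$. Since $\pi(x)=\pi(x')$ we have $\pi_{k+1}(x)=\pi_{k+1}(x')$, hence $r_{k+1}(x)=r_{k+1}(x')=:r$, so by Lemma~\ref{l:hitA_k} both $\sigma^{r+mp_{k+1}}x$ and $\sigma^{r+mp_{k+1}}x'$ lie in $\bigcup_{i=0}^{\hat n_{k}}[A^{(i)}_{k+1}]$ for every $m\in\N$; say $x|_{[r+mp_{k+1},\,r+mp_{k+1}+p_{k})}=A^{(i_{m})}_{k+1}$ and $x'|_{[r+mp_{k+1},\,r+mp_{k+1}+p_{k})}=A^{(i'_{m})}_{k+1}$. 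Applying the structural observation to this pair gives $j_{0}=j_{0}(m)\in\{1,\dots,n_{k}\}$ and $t\ge(n_{k}-8)/9\ge 1$ (recall $n_{k}>N_{*}>100$) such that, writing $n_{m}:=r+mp_{k+1}+j_{0}p_{k-1}$ and noting $(j_{0}+t)p_{k-1}\le(n_{k}+1)p_{k-1}=p_{k}$,
\[
x|_{[n_{m},\,n_{m}+tp_{k-1})}=A^{(i_{m})}_{k+1}|_{[j_{0}p_{k-1},(j_{0}+t)p_{k-1})}=A^{(i'_{m})}_{k+1}|_{[j_{0}p_{k-1},(j_{0}+t)p_{k-1})}=x'|_{[n_{m},\,n_{m}+tp_{k-1})}.
\]
In particular $(\sigma^{n_{m}}x)|_{[0,p_{k-1})}=(\sigma^{n_{m}}x')|_{[0,p_{k-1})}$, so they agree on $[0,L)$. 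Since $0\le j_{0}(m)\le n_{k}$ is bounded, $n_{m}\to\infty$ as $m\to\infty$, giving infinitely many $n$ with $\rho(\sigma^{n}x,\sigma^{n}x')\le 2^{-L}$. As $L$ was arbitrary, $x$ and $x'$ are proximal, so $\pi$ is a proximal extension; together with Proposition~\ref{p:open} and the fact (recalled in Section~\ref{s:PI}) that a proximal extension of an equicontinuous system realizes its maximal equicontinuous factor, this shows $Y$ is the maximal equicontinuous factor of $X^{*}$.

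The only place that needs care is the one already flagged: two level-$(k+1)$ prefix words may well differ on a positive proportion of their coordinates (the leading sub-blocks $A^{(a)}_{k}$ and $A^{(a')}_{k}$ are unrelated), so one cannot argue by ``most coordinates agree''; instead one must locate a long interval of suffix sub-blocks that is common to both, which is exactly what the bound ``$\phi,\phi'$ move at most four indices each'' provides, and then shift by the start of that interval to read off proximality. Everything else is bookkeeping with the block lengths $p_{k}$ and the occurrence sets $r_{k+1}(x)+p_{k+1}\N$ supplied by Lemma~\ref{l:hitA_k}.
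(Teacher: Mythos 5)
Your proof is correct and follows essentially the same route as the paper's: both arguments use that two level-$(k+1)$ prefix words are permuted copies (under elements of $S^2(n_k)$, each moving at most four indices) of words with a common suffix sequence, hence share a common sub-block of length $p_{k-1}$, and then shift to that sub-block along the syndetic occurrence set $r_{k+1}(x)+p_{k+1}\N$ supplied by Lemma \ref{l:hitA_k}. Your version is in fact slightly more careful than the paper's, since you explicitly account for the fact that the leading blocks $A^{(a)}_k$, $A^{(a')}_k$ of the two prefix words may differ and therefore restrict the common-block search to the suffix positions $j\in\{1,\dots,n_k\}$.
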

	
	\begin{proof}
		Fix $x,y\in X^*$ with $\pi(x)=\pi(y)$.
		By Lemma \ref{l:hitA_k} and the definition of $\pi$, $r_k:=r_k(x)=r_k(y)$ for all $k\ge 0$.
		
		Fix any $K\ge 0$.
		For any $k>K+2$, by Lemma \ref{l:hitA_k}, \[\sigma^{p_{k}+r_{K+2}}x,\sigma^{p_{k}+r_{K+2}}y\in \bigcup_{i=0}^{\hat n_{K+1}}[A^{(i)}_{K+2}].\]
		Since all $A^{(i)}_{K+2}\in\Wp_{K+1}$ is the image of $A_{K+2}$ under $P^{(K+2)}_{\phi}$ for some $\phi\in S^2(n_{K+1})$, then for any $0\le i,i'\le \hat n_{K+1}$, there is some $m_{i,i'}$ such that 
		\[A^{(i)}_{K+2}|_{[m_{i,i'}p_K,(m_{i,i'}+1)p_K)}=A^{(i')}_{K+2}|_{[m_{i,i'}p_K,(m_{i,i'}+1)p_K)}.\]
		So let $\sigma^{p_{k}+r_{K+2}}x\in [A^{(i_k)}_{K+2}]$ and $\sigma^{p_{k}+r_{K+2}}y\in [A^{(i'_k)}_{K+2}]$, we have 
		\[(\sigma^{p_{k}+r_{K+2}+m_{i_k,i'_k}p_{K}}x)|_{[0,p_K)}=(\sigma^{p_{k}+r_{K+2}+m_{i_k,i'_k}p_{K}}y)|_{[0,p_K)}.\]
		By the arbitrariness of $K$, it ends the proof.
	\end{proof}
	
	Since $\pi: X^*\to Y$ is a proximal extension and $Y$ is equicontinuous, $Y$ is the maximal equicontinuous factor of $X^*$.
	
	\begin{proof}[Proof of Theorem 1.2]
		Sum up with Proposition \ref{p:open} and Proposition \ref{p:proximal}, we show that $(X^*,\sigma)$ is an open proximal extension of its maximal equicontinuous factor.
		And by Proposition \ref{p:mul_min}, for any $d\ge 2$ and $x\in X^*$, $x^{(d)}$ is $\sigma_d$-minimal.
	\end{proof}

	\section*{Acknowledgements}
	We would like to thank Professor Xiangdong Ye for many useful comments.
	This research is supported by NNSF of China (Nos. 12371196 and 12031019) and China Post-doctoral Grant (Nos. BX20230347 and 2023M743395).


\begin{thebibliography}{99}
	
	\bibitem{AM85} J. Auslander and N. Markley, Graphic flows and multiple disjointness, Trans. Amer. Math. Soc. \textbf{292} (1985), no.~2, 483--499.
	
	\bibitem{EGS75} R. Ellis, S. Glasner, and L. Shapiro, Proximal-isometric ($\mathscr{P}\mathscr{I}$) flows, Adv. Math. \textbf{17} (1975), 213--260.
	
	\bibitem{Fur77} H. Furstenberg, Ergodic behavior of diagonal measures and a theorem of Szemer\'edi on arithmetic progressions, J. Anal. Math. \textbf{31} (1977), 204--256.
	
	\bibitem{Fur81} H. Furstenberg, Poincar\'e recurrence and number theory, Bull. Amer. Math. Soc. \textbf{5} (1981), 211--234.
	
	\bibitem{Fur81b} H. Furstenberg, \emph{Recurrence in ergodic theory and combinatorial number theory}, M. B. Porter Lectures, Princeton University Press, Princeton, NJ, 1981.
	
	\bibitem{FW78} H. Furstenberg and B. Weiss, Topological dynamics and combinatorial number theory, J. Anal. Math. \textbf{34} (1978), 61--85.
	
	\bibitem{GW79} S. Glasner and B. Weiss, On the construction of minimal skew products, Israel J. Math. \textbf{34} (1979), 321--336.
	
	\bibitem{HSY22} W. Huang, S. Shao, and X. Ye, Multiply minimal points for the product of iterates, Israel J. Math. \textbf{251} (2022), 541--565.
	
	\bibitem{HY15} W. Huang and X. Ye, A note on double minimality, Commun. Math. Stat. \textbf{3} (2015), 57--61.
	
	\bibitem{Kur03} P. K\r urka, \emph{Topological and Symbolic Dynamics}, Cours Sp\'ec., {\bf 11} [Specialized Courses] Soci\' et\' e Math\' ematique de France, Paris, 2003, xii+315 pp.
	
	\bibitem{Opr19} P. Oprocha, Double minimality, entropy and disjointness with all minimal systems, Discrete Contin. Dyn. Syst. \textbf{39} (2019), no.~1, 263--275.
	
	\bibitem{Wei98} B. Weiss, Multiple recurrence and doubly minimal systems, in \emph{Topological Dynamics and Applications (Minneapolis, MN, 1995)}, Contemp. Math. \textbf{215}, Amer. Math. Soc., Providence, RI, 1998, pp.~189--196.
	
	\bibitem{Wil84} S. Williams, Toeplitz minimal flows which are not uniquely ergodic, Z. Wahrsch. Verw. Gebiete, 67 (1984), 95--107.
\end{thebibliography}
\end{document}